\def\clsp{\overline{\operatorname{span}}}
\def\range{\operatorname{range}}
\def\id{\operatorname{id}}
\def\MCE{\operatorname{MCE}}
\def\conv{\overline{\operatorname{conv}}}
\def\R{\mathbb{R}}
\def\N{\mathbb{N}}
\def\Z{\mathbb{Z}}
\def\T{\mathbb{T}}
\def\CC{\mathcal{C}}
\def\TT{\mathcal{T}}
\newcommand{\ep}{\varepsilon}
\newcommand{\floor}[1]{\lfloor #1\rfloor}
\newcommand{\ceil}[1]{\lceil #1\rceil}
\newtheorem{thm}{Theorem}[section]
\newtheorem{prop}[thm]{Proposition}
\newtheorem{lem}[thm]{Lemma}
\newtheorem{cor}[thm]{Corollary}
\theoremstyle{definition}
\newtheorem{dfn}[thm]{Definition}
\newtheorem{ntn}[thm]{Notation}
\theoremstyle{remark}
\newtheorem{rmk}[thm]{Remark}
\newtheorem{ex}[thm]{Example}
\author{Alex Kumjian}
\address{Alex Kumjian\\ Department of Mathematics (084)\\ University
of Nevada\\ Reno NV 89557-0084\\ USA} \email{alex@unr.edu}
\author{David Pask}
\address{David Pask, Aidan Sims, Michael F. Whittaker \\ School of Mathematics and
Applied Statistics  \\
The University of Wollongong\\
NSW  2522\\
AUSTRALIA} \email{dpask@uow.edu.au, asims@uow.edu.au, mfwhittaker@gmail.com}
\author{Aidan Sims}
\author{Michael F. Whittaker}
\thanks{This research was supported by the ARC}
\keywords{Higher-rank graph; $C^*$-algebra; connected-sum; simplex; topological realisation}
\subjclass[2010]{Primary: {05C20}; Secondary: {46L05,57M50,18D99}}
\date{\today}
\title{Topological spaces associated to higher-rank graphs}
\begin{document}

\begin{abstract}
We investigate which topological spaces can be constructed as topological
realisations of higher-rank graphs. We describe equivalence relations on higher-rank
graphs for which the quotient is again a higher-rank graph, and show that identifying isomorphic
co-hereditary subgraphs in a disjoint union of two rank-$k$ graphs gives rise to pullbacks of
the associated $C^*$-algebras. We describe a combinatorial version of the
connected-sum operation and apply it to the rank-$2$-graph realisations
of the four basic surfaces to deduce that every compact 2-manifold is the topological
realisation of a rank-$2$ graph. We also show how to construct $k$-spheres and wedges of
$k$-spheres as topological realisations of rank-$k$ graphs.
\end{abstract}

\maketitle

\vspace{-5mm}

\section{Introduction}\label{sec:intro}

Higher-rank graphs, also called $k$-graphs, were introduced by Kumjian and Pask
\cite{KP2000} as combinatorial models for higher-rank Cuntz-Krieger algebras. Since then,
the resulting class of $C^*$-algebras has been studied in detail. More recently,
in \cite{PaskQuiggEtAl:NYJM04, PQR2, KKQS, KPS3}, an investigation of $k$-graphs
from a topological point of view was begun. Definition~3.2 of \cite{KKQS} associates to
each $k$-graph $\Lambda$ a topological realisation $X_\Lambda$ whose fundamental group
and homology are the same as the fundamental group and cubical homology of $\Lambda$. The
motivation for the current article was to investigate the range of topological spaces
which can be constructed as topological realisations of $k$-graphs. We began with two
goals: obtain all compact $2$-manifolds as topological realisations of $2$-graphs; and,
more generally, obtain all triangularisable $k$-manifolds as topological realisations of
$k$-graphs.

Our approach to the first goal was to exploit the classification of compact 2-manifolds
as spheres, $n$-holed tori, or connected-sums of the latter with the Klein bottle or
projective plane (see for example \cite[Theorem~I.7.2]{Massey}). Examples of $2$-graphs
whose topological realisations were homeomorphic to each of the four basic surfaces were
presented in \cite{KKQS}. So our aim was to develop a combinatorial connected-sum
operation for $2$-graphs, and to show that it can be applied to finite disjoint unions of
the four $2$-graphs just mentioned so as to construct any desired connected sum of their
topological realisations. We achieve this in Section~\ref{sec:surfaces}.

Our approach to the second, more general, goal consisted of four steps. Step~1 was to
determine which equivalence relations on $k$-graphs have the property that the quotient
category itself forms a $k$-graph. Step~2 was to invoke
\cite[Proposition~5.3]{KKQS}---which shows that topological realisation is a functor from
$k$-graphs to topological spaces---to see that, for $k$-graphs, topological realisations
of quotients coincide with quotients of topological realisations. Step~3 was to construct
$k$-graphs $\Sigma_k$ whose topological realisations are naturally homeomorphic to
$k$-simplices. Step~4 was to show that equivalence relations corresponding to desired
identifications amongst the $(k-1)$-faces of a disjoint union of copies of $\Sigma_k$ are
of the sort developed in Step~1; and then deduce that arbitrary triangularisable
manifolds could be realised as the topological realisations of appropriate quotients of
disjoint unions of copies of $\Sigma_k$. We have achieved steps 1--3, but the
combinatorics of $k$-graphs place significant constraints on the ways in which faces in a
disjoint union of copies of the $k$-graphs $\Sigma_k$ from Step~3 can be identified so as
to produce a new $k$-graph, so we are as yet unable to realise arbitrary triangularisable
manifolds. However, our construction is flexible enough so that we can glue two
$k$-simplices on their boundaries to obtain a $k$-sphere. The details of this appear in
section~\ref{sec:spheres}.

The paper is organised as follows. In Section~\ref{sec:quotients} we identify those equivalence relations $\sim$
on a $k$-graph $\Lambda$ for which the quotient $\Lambda/{\sim}$ forms a $k$-graph.
Proposition~\ref{prp:topological gluing} shows that the quotient operation is
well-behaved with respect to the topological realisation of a $k$-graph. In
Section~\ref{sec:Cstar} we investigate the properties of the quotients of $k$-graphs at
the level of their associated $C^*$-algebras. Specifically, given $k$-graphs $\Lambda_1$
and $\Lambda_2$ with a partially-defined isomorphism $\phi$ between the complements in
the $\Lambda_i$ of hereditary subsets $H_i$, the map $\phi$ induces an equivalence relation
$\sim_\phi$ on the disjoint union of $\Lambda_1$ and $\Lambda_2$ and we show that the quotient
$(\Lambda_1 \sqcup \Lambda_2)/{\sim_\phi}$ forms a $k$-graph. We then show that the Toeplitz
algebra $\TT C^*(\Lambda_1 \sqcup \Lambda_2)/{\sim_\phi})$ is a pullback of
$\TT C^*(\Lambda_1)$ and $\TT C^*(\Lambda_2)$ over the $C^*$-algebra of the common subgraph (see
Theorem~\ref{thm:pullback}). If the $H_i$ are also saturated, then this result also descends to
Cuntz-Kreiger algebras (Corollary~\ref{cor:CK pullback}).

In section~\ref{sec:surfaces} we define the connected-sum operation on $2$-graphs and
show that it corresponds to the connected-sum operation on their topological
realisations. Then, after recalling from \cite{KKQS} how to realise the four basic
surfaces, we show that every compact surface is the topological realisation of a $2$-graph.

In section~\ref{sec:spheres} we construct, for each $k \in \N$, a $k$-graph $\Sigma_k$
whose topological realisation is a $k$-dimensional simplex (see
Theorem~\ref{thm:homeomorphism}). The combinatorics involved in this construction are
interesting in their own right---the vertices of $\Sigma_k$ are indexed by \emph{placing
functions} which can be thought of as the possible outcomes of a horse race involving
$k+1$ horses (see the On-Line Encyclopedia of Integer Sequences \cite[Sequence
A000670]{oeis}); alternatively, they can be regarded as ordered partitions of a
$(k+1)$-element set. Using suitable equivalence relations, two (or more) copies of
$\Sigma_k$ can be glued along their common boundary to produce a new $k$-graph. In
Theorem~\ref{thm:spheres realised} we use this construction to realise all $k$-spheres as
the topological realisations of $k$-graphs; we also show that for each $n \ge 1$ there is
a finite $k$-graph $\Lambda$ whose topological realisation is a wedge of $n$ $k$-spheres.

\subsection*{Background and notation}
We regard $\N^k$ as a semigroup under addition, with identity 0 and generators $e_1,
\dots, e_k$. For $m,n \in \N^k$, we write $m_i$ for the $i$\textsuperscript{th}
coordinate of $m$, and we define $m \vee n \in \N^k$ by $(m \vee n)_i = \max\{m_i,
n_i\}$. We write $m \le n$ if and only if $m_i \le n_i$ for all $i$.

Let $\Lambda$ be a countable small category and $d : \Lambda \to \N^k$ a functor.
Write $\Lambda^n := d^{-1}(n)$ for each $n \in \N^k$. Then $\Lambda$ is a
$k$-graph if $d$ satisfies the \emph{factorisation property}: $(\mu,\nu) \mapsto \mu\nu$
is a bijection of $\{(\mu,\nu) \in \Lambda^m \times \Lambda^n : s(\mu) = r(\nu)\}$ onto
$\Lambda^{m+n}$ for each $m,n \in \N^k$ (see~\cite{KP2000}). We then have $\Lambda^0 = \{\id_o : o\in
\operatorname{Obj}(\Lambda)\}$, and so we regard the domain and codomain maps as maps $s,
r : \Lambda \to \Lambda^0$.

If $\Lambda_i$ is a $k_i$-graph for $i=1,2$ then the \emph{cartesian product} $\Lambda_1
\times \Lambda_2$ with the natural product structure forms a $(k_1 + k_2)$-graph (see
\cite[Proposition 1.8]{KP2000}).

A \emph{$k$-graph morphism} between $k$-graphs $(\Lambda, d_1)$ and $(\Sigma, d_2)$ is a
functor $\phi : \Lambda \to \Sigma$ such that $d_1 ( \lambda ) = d_2 ( \phi ( \lambda ))$
for all $\lambda \in \Lambda$.

Recall from \cite{PaskQuiggEtAl:NYJM04} that for $v,w \in \Lambda^0$ and $X \subseteq
\Lambda$, we write
\[
v X := \{\lambda \in X : r(\lambda) = v\},\quad
X w := \{\lambda \in X : s(\lambda) = w\},\quad\text{and}\quad
v X w = vX \cap Xw.
\]
If $V \subset \Lambda^0$, then $V\Lambda = r^{-1} (V)$ and $\Lambda V = s^{-1} (V)$. A
$k$-graph $\Lambda$ has \emph{no sources} if $0 < |v\Lambda^n|$ for all $v \in \Lambda^0$
and $n \in \N^k$.

For $\lambda \in \Lambda$ and $0 \le m \le n \le d(\lambda)$, the factorisation property
yields unique elements $\alpha \in \Lambda^m$, $\beta \in \Lambda^{n-m}$ and $\gamma \in
\Lambda^{d(\lambda) - n}$ such that $\lambda = \alpha\beta\gamma$. Define $\lambda(m,n)
:= \beta$. We then have $\lambda(0,m) = \alpha$ and $\lambda(n, d(\lambda)) = \gamma$. In
particular, $\lambda = \lambda(0,m)\lambda(m, d(\lambda))$ for each $0 \le m \le d(\lambda)$.

Given $\mu,\nu \in \Lambda$, we define $\MCE( \mu,\nu):=\{\lambda \in \Lambda^{d(\mu)
\vee d(\nu)} : \lambda = \mu\mu' = \nu\nu'\text{ for some }\mu',\nu'\}$.  If $| \MCE (
\mu , \nu ) | < \infty$ for all $\mu , \nu \in \Lambda$ then we say that $\Lambda$ is
\emph{finitely aligned}.

We define $\Lambda^{*2} = \{(\lambda, \mu) \in \Lambda \times \Lambda : s(\lambda) =
r(\mu)\}$, the collection of composable pairs in $\Lambda$. If $\Lambda_1 , \Lambda_2$
are $k$-graphs, then the disjoint union $\Lambda_1 \sqcup \Lambda_2$ naturally forms a
$k$-graph. We will allow for the possibility of $0$-graphs with the convention that
$\N^0$ is the trivial semigroup $\{0\}$. We insist that all $k$-graphs are nonempty. See
\cite{KP2000} for further details regarding the basic structure of $k$-graphs.

A set of vertices $H \subset \Lambda^0$ is \emph{hereditary} if $s(H\Lambda) \subseteq
H$; similarly, $H$ is \emph{co-hereditary} if $r(\Lambda H) \subseteq H$.  A set $H$ is
hereditary if and only if $\Lambda^0 \setminus H$ is co-hereditary.  A subgraph
$\Gamma\subset\Lambda$ is said to be \emph{hereditary} (resp.~\emph{co-hereditary}) if
$\Gamma = \Gamma^0\Lambda$ (resp.\ $\Gamma = \Lambda\Gamma^0$).  If $\Gamma$ is a
hereditary subgraph, then $\Gamma^0$ is a hereditary subset of $\Lambda^0$. Applying this
result to the opposite category $\Lambda^{\mathrm{op}}$, which is also a $k$-graph under
the same degree map, yields the corresponding statement for a co-hereditary subgraph.

We now review the construction of the topological realisation of a $k$-graph for $k \ge
1$ given in \cite{KKQS}. Given $t \in \R^k$, we will write $\ceil{t}$ for the least
element of $\Z^k$ which is coordinatewise greater than  or equal to $t$, and $\floor{t}$
for the greatest element of $\Z^k$ which is coordinatewise less than or equal to $t$. Let
$\mathbf{1}_k := (1,1, \dots, 1) \in \N^k$. Then $\floor{t} \le t \le \ceil{t} \le
\floor{t} + \mathbf{1}_{k}$ for all $t \in \R^k$.

Given $p \le q \in \N^k$, we denote by $[p,q]$ the \emph{closed interval} $\{t \in \R^k :
p \le t \le q\}$, and we denote by $(p,q)$ the \emph{relatively open interval} $\{t \in
[p,q] : p_i < t_i < q_i\text{ whenever } p_i < q_i\}$. Observe that $(p,q)$ is not open
in $\R^k$ unless $p_i < q_i$ for all $i$, but it is open as a subspace of $[p,q]$. The
set $(p,q)$ is never empty: for example if $p = q$ then $(p,q) = [p,q] = \{p\}$. In
general, as a subset of $\R^k$, the dimension of $(p,q)$ is $|\{i \le k : p_i < q_i\}|$.
If $p_i < q_i$ then the $i$\textsuperscript{th}-coordinate projection of $(p,q)$ is
$(p_i, q_i)$, and if $p_i = q_i$ then the $i$\textsuperscript{th}-coordinate projection
of $(p,q)$ is $\{p_i\}$. If $m \in \N^k$ with $m \le \mathbf{1}_{k}$, then $\floor{t}=0$
and $\ceil{t} = m$ for all $t \in (0, m)$.

We define a relation on the topological disjoint union $\bigsqcup_{\lambda \in \Lambda}
\{\lambda\} \times [0,d(\lambda)]$ by
\begin{equation}\label{eq:equiv rel}
(\mu, s) \sim (\nu, t) \quad\iff\quad \mu(\floor{s}, \ceil{s})
    = \nu(\floor{t}, \ceil{t})\text{ and } s - \floor{s} = t - \floor{t}.
\end{equation}
It is straightforward to see that this is an equivalence relation.

\begin{dfn}[cf.\,{\cite[Definition 3.2]{KKQS}}]
Let $\Lambda$ be a $k$-graph. With notation as above, we define the \emph{topological
realisation} $X_\Lambda$ of $\Lambda$ to be the quotient space
\[
\Big(\bigsqcup_{\lambda \in \Lambda} \{\lambda\} \times [0, d(\lambda)]\Big)\Big/\sim .
\]
\end{dfn}

We will need to discuss topological realisations of $0$-graphs in Section~\ref{sec:spheres}.
To avoid discussing separate cases, we take the convention that
$\mathbf{1}_0 = 0 \in \{0\} = \N^0$. If $\Lambda$ is a $0$-graph, then $\Lambda = \Lambda^0$,
the relation $\sim$ of equation~\eqref{eq:equiv rel} is trivial, and we have
$X_\Lambda = \Lambda^0 \times \{0\} \cong \Lambda^0$.

\begin{rmk}
One can view the morphisms in a $k$-graph whose degrees are smaller than $\mathbf{1}_{k}$
as a cubical set in a natural way \cite[Theorem~A.4]{KPS3}, and then the topological
realisation $X_\Lambda$ of $\Lambda$ is the same as the topological realisation of the
associated cubical set (see, for example, \cite{Grandis2005}).
\end{rmk}

\section{Quotients of $k$-graphs}\label{sec:quotients}

In this section we identify the equivalence relations $\sim$ on $k$-graphs $\Lambda$ for
which the quotient set $\Lambda/\sim$ itself becomes a $k$-graph. We describe the
topological realisation of $\Lambda/\sim$ as a quotient of the topological realisation of
$\Lambda$.

\begin{prop}\label{prp:quotient k-graph}
Let $\Lambda$ be a $k$-graph. Suppose that $\sim$ is an equivalence relation on $\Lambda$
with the following properties:
\begin{enumerate}
\item\label{it:sim-d} if $\mu \sim \nu$ then $d(\mu) = d(\nu)$;
\item\label{it:sim-comp} if $\alpha \sim \alpha'$ and $\beta \sim \beta'$ with
    $r(\beta) = s(\alpha)$ and $r(\beta') = s(\alpha')$, then $\alpha\beta \sim
    \alpha'\beta'$;
\item\label{it:sim-factor} if $\alpha\beta \sim \alpha'\beta'$ and $d(\alpha) =
    d(\alpha')$, then $\alpha \sim \alpha'$ and $\beta \sim \beta'$;
\item\label{it:sim-lift} if $s(\alpha) \sim r(\beta)$, then there exist $\alpha',
    \beta'$ such that $\alpha' \sim \alpha$, $\beta'\sim\beta$ and $s(\alpha') =
    r(\beta')$.
\end{enumerate}
Then the structure maps on $\Lambda$ descend to structure maps on $\Lambda/{\sim}$ under
which the latter is a $k$-graph.
\end{prop}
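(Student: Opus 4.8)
The plan is to equip $\Lambda/{\sim}$ with the structure of a small category, produce a degree functor, and then verify the factorisation property, checking at each stage that the four hypotheses guarantee that the relevant structure descends from $\Lambda$. Writing $[\lambda]$ for the $\sim$-class of $\lambda$, property~\eqref{it:sim-d} lets me define $\bar d([\lambda]) := d(\lambda)$ unambiguously; in particular each class lies in a single $\Lambda^n$, and classes of vertices consist of vertices. To descend the source and range maps I would use property~\eqref{it:sim-factor}: if $\mu \sim \nu$ then factoring $\mu = r(\mu)\mu$ and $\nu = r(\nu)\nu$ (with first factor of degree $0$) and applying~\eqref{it:sim-factor} gives $r(\mu)\sim r(\nu)$, and similarly $s(\mu)\sim s(\nu)$ from $\mu = \mu\, s(\mu)$. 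Hence $\bar r([\lambda]) := [r(\lambda)]$ and $\bar s([\lambda]) := [s(\lambda)]$ are well defined, and the objects of $\Lambda/{\sim}$ are exactly the classes $[v]$ with $v\in\Lambda^0$.

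Next I would define composition. Two classes $[\alpha],[\beta]$ with $\bar s([\alpha]) = \bar r([\beta])$ satisfy $s(\alpha)\sim r(\beta)$, so property~\eqref{it:sim-lift} supplies composable representatives $\alpha'\sim\alpha$, $\beta'\sim\beta$ with $s(\alpha')=r(\beta')$, and I set $[\alpha][\beta] := [\alpha'\beta']$. Property~\eqref{it:sim-comp} is precisely what shows this is independent of the choice of composable representatives: any two such lifts $(\alpha',\beta')$ and $(\alpha'',\beta'')$ have $\alpha'\sim\alpha''$ and $\beta'\sim\beta''$ with both pairs composable, so $\alpha'\beta'\sim\alpha''\beta''$. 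The source/range compatibilities $\bar s([\alpha][\beta]) = \bar s([\beta])$ and $\bar r([\alpha][\beta]) = \bar r([\alpha])$ then follow since $s(\alpha'\beta')=s(\beta')$ and $r(\alpha'\beta')=r(\alpha')$, and the identity laws are immediate from $r(\lambda)\lambda = \lambda = \lambda\, s(\lambda)$.

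The main obstacle is associativity, because property~\eqref{it:sim-lift} controls only one endpoint of $\beta$, so naively iterating it to produce a \emph{composable chain} $\alpha'\sim\alpha$, $\beta'\sim\beta$, $\gamma'\sim\gamma$ with $s(\alpha')=r(\beta')$ and $s(\beta')=r(\gamma')$ need not terminate. I would circumvent this by computing one bracketing explicitly and then splitting. To evaluate $([\alpha][\beta])[\gamma]$, first write $[\alpha][\beta]=[\alpha_1\beta_1]$ with $s(\alpha_1)=r(\beta_1)$, then apply~\eqref{it:sim-lift} again to obtain $\delta\sim\alpha_1\beta_1$ and $\gamma'\sim\gamma$ with $s(\delta)=r(\gamma')$, so the left-hand side is $[\delta\gamma']$. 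Now the ambient factorisation property of $\Lambda$ factors $\delta=\delta_1\delta_2$ with $d(\delta_1)=d(\alpha)=d(\alpha_1)$, and since $\delta_1\delta_2=\delta\sim\alpha_1\beta_1$, property~\eqref{it:sim-factor} yields $\delta_1\sim\alpha$ and $\delta_2\sim\beta$. This produces a genuine composable chain $\delta_1,\delta_2,\gamma'$ carrying the correct classes, so that $[\alpha]([\beta][\gamma])$ computed with these same representatives is $[\delta_1][\delta_2\gamma']=[\delta_1\delta_2\gamma'] = [\delta\gamma']$ as well, and the two bracketings agree. This splitting trick --- trading a second application of~\eqref{it:sim-lift} for a factorisation in $\Lambda$ followed by~\eqref{it:sim-factor} --- is the crux of the argument.

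Finally I would check that $\bar d$ is a functor, which is immediate since $d(\alpha'\beta')=d(\alpha)+d(\beta)$, and establish the factorisation property for $\bar d$. Existence of a factorisation of a class with $\bar d([\lambda])=m+n$ comes from factoring $\lambda=\mu\nu$ in $\Lambda$ and passing to classes. For uniqueness, given $[\mu][\nu]=[\mu'][\nu']$ with $\bar d([\mu])=\bar d([\mu'])=m$, I would lift both products to composable representatives, obtaining $\tilde\mu\tilde\nu\sim\tilde\mu'\tilde\nu'$ with $d(\tilde\mu)=d(\tilde\mu')$, so that property~\eqref{it:sim-factor} forces $[\mu]=[\mu']$ and $[\nu]=[\nu']$. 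Countability and smallness are inherited from $\Lambda$, completing the verification that $\Lambda/{\sim}$ is a $k$-graph.
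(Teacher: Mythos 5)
Your proposal is correct and follows essentially the same route as the paper: descend $d$, $r$, $s$ via hypotheses (\ref{it:sim-d}) and (\ref{it:sim-factor}), define composition via (\ref{it:sim-lift}) and (\ref{it:sim-comp}), and handle associativity by applying (\ref{it:sim-lift}) twice and then splitting the lifted product using the factorisation property in $\Lambda$ together with (\ref{it:sim-factor}). The ``splitting trick'' you identify as the crux is exactly the paper's own associativity argument, and your treatment of the factorisation property of the quotient likewise matches the paper's.
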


\begin{proof}
Let $[\lambda]$ denote the equivalence class containing $\lambda$ and suppose that $\mu
\in [\lambda]$. Note that $d(\lambda) = d(\mu)$ by~\eqref{it:sim-d}.
We have $\lambda = r(\lambda)\lambda$ and $\mu = r(\mu)\mu$, and since $d(r(\lambda)) = 0 =
d(r(\mu))$, condition~(\ref{it:sim-factor}) implies that $r(\lambda) \sim r(\mu)$. Since
$\lambda = \lambda s(\lambda)$, $\mu = \mu s(\mu)$ and $d(\lambda) = d(\mu)$, condition
(\ref{it:sim-factor}) implies that $s(\lambda) \sim s(\mu)$.
Hence, the formulas
\[
d([\mu]) := d(\mu), \quad r([\mu]) = [r(\mu)], \quad\text{ and } s([\mu]) = [s(\mu)]
\]
are well defined.

If $[s(\lambda)] = [r(\mu)]$, then condition~(\ref{it:sim-lift}) implies that there
exist $\alpha \in [\lambda]$ and $\beta \in [\mu]$ such that $s(\alpha) = r(\beta)$, and then
condition~(\ref{it:sim-comp}) implies that $[\alpha\beta]$ does not depend on the choice
of $\alpha$ and $\beta$; so we may define $[\lambda][\mu] := [\alpha\beta]$. To see that
this is associative, suppose that $[\lambda], [\mu], [\nu]$ is a composable triple in
$\Lambda/{\sim}$. By~(\ref{it:sim-lift}) there exist $\alpha \in [\lambda]$ and $\beta
\in [\mu]$ with $s(\alpha) = r(\beta)$. Now $s([\alpha\beta]) = r([\nu])$, and
so~(\ref{it:sim-lift}) again gives $\eta \in [\alpha\beta]$ and $\gamma \in [\nu]$ such
that $s(\eta) = r(\nu)$. The factorisation property gives $\eta = \alpha'\beta'$ with
$d(\alpha') = d(\alpha)$ and $d(\beta') = d(\beta)$, and then~(\ref{it:sim-factor})
implies that $\alpha' \sim \alpha$ and $\beta' \sim \beta$. We now have
\[
([\lambda][\mu])[\nu]
    = ([\alpha'][\beta'])[\gamma]
    = [\alpha'\beta'][\gamma]
    = [\alpha'\beta'\gamma]
    = [\alpha'][\beta'\gamma]
    =[\lambda]([\mu][\nu]).
\]

We have now established that $\Lambda/{\sim}$ is a category. For the unique factorisation
property, suppose that $d([\lambda]) = m+n$. Then $d(\lambda) = m+n$, and the
factorisation property in $\Lambda$ allows us to write $\lambda = \mu\nu$ with $d(\mu) =
m$ and $d(\nu) = n$. We then have $[\lambda] = [\mu][\nu]$ with $d([\mu]) = m$ and
$d([\nu]) = n$, and condition~(\ref{it:sim-factor}) implies that this factorisation is
unique.
\end{proof}

\begin{ex} \label{ex:disjoint_union_equiv}
Let $\Lambda_1 , \Lambda_2$ be $k$-graphs and $\phi_i : \Gamma \to \Lambda_i$ be an
injective $k$-graph homomorphism for $i=1,2$ such that $\phi_i ( \Gamma )$ is hereditary
(or cohereditary) in $\Lambda_i$ for $i=1,2$. Routine checks show that the smallest
equivalence relation $\sim_\phi$ on $\Lambda_1 \sqcup \Lambda_2$ such that
$\phi_1(\gamma) \sim_\phi \phi_2(\gamma)$ for all $\gamma \in \Gamma$ satisfies the
hypotheses of Proposition~\ref{prp:quotient k-graph}.
\end{ex}

\begin{prop}\label{prp:topological gluing}
Under the hypotheses of Proposition~\ref{prp:quotient k-graph}, there is an equivalence
relation $\approx$ on $X_\Lambda$ such that $[\mu,s] \approx [\nu,t]$ if and only if
$\mu(\floor{s}, \ceil{s}) \sim \nu(\floor{t}, \ceil{t})$ and $s - \floor{s} = t -
\floor{t}$. Let $\llbracket\lambda, t\rrbracket$ denote the equivalence class of
$[\lambda,t] \in X_\Lambda$ under $\approx$, and let $[\lambda]_\sim$ denote the
equivalence class of $\lambda \in \Lambda$ under $\sim$. Then there is a homeomorphism
$X_\Lambda /{\approx} \cong X_{\Lambda/{\sim}}$ satisfying $\llbracket\lambda, t
\rrbracket \mapsto [[\lambda]_\sim, t]$ for all $\lambda, t$.
\end{prop}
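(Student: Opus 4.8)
The plan is to establish three things in turn: that $\approx$ is a well-defined equivalence relation; that the factorisation operation $\lambda \mapsto \lambda(\floor{s},\ceil{s})$ commutes with passage to the quotient $k$-graph; and finally that the map $\Phi\colon \llbracket\lambda,t\rrbracket \mapsto [[\lambda]_\sim, t]$ is a homeomorphism. The unifying idea is that both $X_\Lambda/{\approx}$ and $X_{\Lambda/\sim}$ arise as quotients of the \emph{same} disjoint union of cubes $Y := \bigsqcup_{\lambda\in\Lambda}\{\lambda\}\times[0,d(\lambda)]$ via quotient maps with identical fibres, whence they are canonically homeomorphic.

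First I would check that $\approx$ is well defined. The two conditions $\mu(\floor{s},\ceil{s}) \sim \nu(\floor{t},\ceil{t})$ and $s-\floor{s}=t-\floor{t}$ depend on $[\mu,s]$ only through the pair $(\mu(\floor{s},\ceil{s}),\, s-\floor{s})$, and this pair is precisely the data preserved by the relation~\eqref{eq:equiv rel} defining $X_\Lambda$; hence the conditions are independent of the chosen representative. Reflexivity, symmetry, and transitivity of $\approx$ then follow at once from the corresponding properties of $\sim$ on $\Lambda$ together with the obvious additivity of the equalities $s-\floor{s}=t-\floor{t}$.

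Second, I would record the compatibility $[\lambda]_\sim(\floor{s},\ceil{s}) = [\lambda(\floor{s},\ceil{s})]_\sim$ in $\Lambda/{\sim}$. This is immediate from Proposition~\ref{prp:quotient k-graph}: applying the quotient functor to $\lambda = \lambda(0,\floor{s})\,\lambda(\floor{s},\ceil{s})\,\lambda(\ceil{s},d(\lambda))$ yields a factorisation of $[\lambda]_\sim$ with the correct degrees, and uniqueness of factorisation in $\Lambda/{\sim}$ identifies the middle factor as $[\lambda(\floor{s},\ceil{s})]_\sim$. Consequently the relation~\eqref{eq:equiv rel} for $X_{\Lambda/\sim}$ reads: $([\mu]_\sim, s) \sim ([\nu]_\sim, t)$ if and only if $\mu(\floor{s},\ceil{s}) \sim \nu(\floor{t},\ceil{t})$ and $s-\floor{s}=t-\floor{t}$, which is exactly the condition defining $\approx$.

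Third, I would compare two composites of quotient maps out of $Y$. On one side stands $\pi\circ q\colon Y \to X_\Lambda/{\approx}$, where $q\colon Y\to X_\Lambda$ and $\pi\colon X_\Lambda\to X_\Lambda/{\approx}$ are the defining quotient maps. On the other stands $q'\circ p\colon Y \to X_{\Lambda/\sim}$, where $p\colon Y \to \bigsqcup_{\kappa\in\Lambda/\sim}\{\kappa\}\times[0,d(\kappa)]$ is $(\lambda,t)\mapsto([\lambda]_\sim,t)$ and $q'$ is the defining quotient map onto $X_{\Lambda/\sim}$. Since $p$ carries each cube homeomorphically onto a cube (using $d([\lambda]_\sim)=d(\lambda)$) and is an open surjection, it is a quotient map; as composites of quotient maps are quotient maps, both $\pi\circ q$ and $q'\circ p$ are quotient maps. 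By the fibre computation of the second step, $(\mu,s)$ and $(\nu,t)$ share an image under $\pi\circ q$ if and only if they do under $q'\circ p$, the common condition being $\mu(\floor{s},\ceil{s})\sim\nu(\floor{t},\ceil{t})$ and $s-\floor{s}=t-\floor{t}$. Two quotient maps from a common space with equal fibres have canonically homeomorphic codomains, and the induced homeomorphism is precisely the map $\Phi$ of the statement. I expect the main obstacle to be organisational rather than conceptual: keeping the three relations disentangled (the two instances of~\eqref{eq:equiv rel}, on $Y$ and on the quotient disjoint union, as against the graph relation $\sim$ of Proposition~\ref{prp:quotient k-graph}) and confirming that $p$ is genuinely a quotient map. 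Once the fibre identification of the second step is secured, the homeomorphism is formal.
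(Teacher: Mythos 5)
Your proof is correct, but it takes a different route from the paper's. The paper constructs the continuous surjection $\phi : X_\Lambda \to X_{\Lambda/\sim}$, $[\lambda,t]\mapsto[[\lambda]_\sim,t]$, by citing the functoriality of topological realisation (\cite[Proposition~5.3]{KKQS}) applied to the surjective $k$-graph morphism $\lambda\mapsto[\lambda]_\sim$, and then verifies that the fibres of $\phi$ are exactly the $\approx$-classes --- the same fibre computation you carry out in your second step, where the identity $[\lambda]_\sim(\floor{s},\ceil{s}) = [\lambda(\floor{s},\ceil{s})]_\sim$ does the real work. You instead bypass the cited functoriality result entirely and realise both $X_\Lambda/{\approx}$ and $X_{\Lambda/\sim}$ as quotients of the common disjoint union of cubes $Y$, checking that $p$ is an open surjection (hence a quotient map) and that the two composite quotient maps have identical fibres. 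What your approach buys is self-containedness, and, more substantively, it settles a point the paper leaves implicit: a continuous surjection whose fibres are the $\approx$-classes only induces a continuous \emph{bijection} $X_\Lambda/{\approx}\to X_{\Lambda/\sim}$, and since $\Lambda$ may be infinite one cannot appeal to compactness to upgrade this to a homeomorphism; one needs $\phi$ to be a quotient map, which is exactly what your ``equal fibres of two quotient maps out of $Y$'' argument delivers. What the paper's approach buys is brevity and reuse of an established result, at the cost of that unstated quotient-map verification. Your argument is complete as proposed; the only items to write out carefully are the ones you already flag --- that $p$ maps each cube homeomorphically onto a cube of the same dimensions (using $d([\lambda]_\sim)=d(\lambda)$) and is open because images of open subsets of components are open in components of the codomain.
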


\begin{proof}
Recall that in $X_\Lambda$, we have $[\mu, s] = [\mu',s']$ if and only if
$\mu(\floor{s},\ceil{s}) = \mu'(\floor{s'}, \ceil{s'})$ and $s - \floor{s} = s' -
\floor{s'}$. So the formula for $\approx$ is well-defined and determines a relation on
$X_\Lambda$. It is elementary to check that this is an equivalence relation.

The map $\lambda \mapsto [\lambda]_{\sim}$ is a surjective $k$-graph morphism from
$\Lambda$ to $\Lambda/{\sim}$, and so \cite[Proposition~5.3]{KKQS} implies that there is
a continuous surjection $\phi : X_\Lambda \to X_{\Lambda/{\sim}}$ satisfying $[\lambda, t]
\mapsto \big[[\lambda]_{\sim}, t\big]$. We need to show that $\big[[\lambda]_{\sim}, t\big]
= \big[[\lambda']_{\sim}, t'\big]$ if and only if  $[\lambda, t] \approx [\lambda', t']$.
Suppose that $\big[[\lambda]_{\sim}, t \big] = \big[[\lambda']_{\sim}, t'\big]$. Then
$[\lambda]_{\sim} (\floor{t}, \ceil{t}) = [\lambda']_{\sim}
(\floor{t'}, \ceil{t'})$ and $t - \floor{t} = t' - \floor{t'}$. Hence,
\[
[\lambda(\floor{t}, \ceil{t})] = [\lambda]_{\sim} (\floor{t}, \ceil{t}) = [\lambda']_{\sim} (\floor{t'}, \ceil{t'})
 = [\lambda'(\floor{t'}, \ceil{t'})].
\]
Thus $\lambda(\floor{t}, \ceil{t}) \sim \lambda'(\floor{t'}, \ceil{t'})$, and so
$[\lambda, t] \approx [\lambda', t']$. Reversing the steps above proves the converse.
\end{proof}

\section{The Toeplitz algebras of quotients of $k$-graphs}\label{sec:Cstar}

In this section we study the $C^*$-algebras associated to quotients of
$k$-graphs as discussed in the preceding section. There are two $C^*$-algebras associated
to a finitely-aligned $k$-graph $\Lambda$ with no sources: the Toeplitz algebra $\TT C^*
( \Lambda )$ and the ``usual" $k$-graph algebra $C^* ( \Lambda )$.

Recall from \cite{RaeburnSims:JOT05} that the Toeplitz algebra $\TT C^*(\Lambda)$ of a
finitely-aligned $k$-graph $\Lambda$ is the universal $C^*$-algebra generated by elements
$\{t_\lambda : \lambda \in \Lambda\}$ such that
\begin{itemize}
\item[(TCK1)] $\{t_v : v \in \Lambda^0\}$ is a set of mutually orthogonal
    projections;
\item[(TCK2)] $t_\mu t_\nu = t_{\mu\nu}$ whenever $s(\mu) = r(\nu)$;
\item[(TCK3)] $t^*_\mu t_\mu = t_{s(\mu)}$ for all $\mu$;
\item[(TCK4)] for every $v \in \Lambda^0$, $n \in \N^k$ and finite $F \subseteq
    v\Lambda^n$, we have $t_v \ge \sum_{\mu \in F} t_\mu t^*_\mu$; and
\item[(TCK5)] $t_\mu t^*_\mu t_\nu t^*_\nu = \sum_{\lambda \in \MCE(\mu,\nu)}
    t_\lambda t^*_\lambda$ for all $\mu,\nu$ (an empty sum is interpreted as zero).
\end{itemize}
A collection $t = \{t_\lambda : \lambda \in \Lambda\}$ satisfying (TCK1)--(TCK5) is
called a \emph{Toeplitz-Cuntz-Krieger $\Lambda$-family}.

To describe the ``usual" $k$-graph algebra $C^*(\Lambda)$, first recall
that for $v \in \Lambda^0$, a nonempty set $E \subseteq v\Lambda$ is \emph{exhaustive} if for
every $\mu \in v\Lambda$ there exists $\lambda \in E$ such that $\MCE(\mu,\lambda) \not=
\emptyset$; equivalently, $E$ is exhaustive if $\mu\Lambda \cap E\Lambda \not= \emptyset$
for all $\mu \in v\Lambda$. The $C^*$-algebra
$C^*(\Lambda)$ is universal for Toeplitz-Cuntz-Krieger $\Lambda$-families satisfying the
additional relation:
\begin{itemize}
\item[(CK)] for every $v \in \Lambda^0$ and finite exhaustive $E \subseteq v\Lambda$,
    we have $\prod_{\lambda \in E} ( s_v - s_\lambda s^*_\lambda ) = 0$
\end{itemize}
(see \cite{RaeburnSimsEtAl:JFA04}).

We now describe how inclusions of $k$-graphs induce homomorphisms
of their Toeplitz algebras. The following is a generalisation of \cite[Proposition
3.3]{PRY}, \cite[Theorem 5.2]{A}\footnote{there is a missing injectivity hypothesis in
the statement of this result} and \cite[Lemma 2.3]{MPR2013}.

\begin{lem}\label{lem:k-graph inclusion}
Let $\Lambda$ be a finitely-aligned $k$-graph with no sources and suppose that $\Gamma
\subseteq \Lambda$ is a subgraph such that $\MCE_\Lambda(\mu,\nu) \subseteq \Gamma$ for
all $\mu,\nu \in \Gamma$. Let $\{ s_\gamma : \gamma \in \Gamma \}$ denote the universal
Toeplitz-Cuntz-Krieger $\Gamma$-family and let $\{ t_\lambda : \lambda \in \Lambda \}$
denote the universal Toeplitz-Cuntz-Krieger $\Lambda$-family. There is an injective
homomorphism $\iota : \TT C^*(\Gamma) \to \TT C^*(\Lambda)$ such that $\iota(s_\gamma ) =
t_\gamma$ for all $\gamma \in \Gamma$.
\end{lem}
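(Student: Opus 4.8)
The plan is to build $\iota$ using the universal property of $\TT C^*(\Gamma)$ and then to establish injectivity via the Fock representation. For the existence of $\iota$, I would show that the subfamily $\{t_\gamma : \gamma \in \Gamma\}$ of the universal $\Lambda$-family is itself a Toeplitz--Cuntz--Krieger $\Gamma$-family. Relations (TCK1)--(TCK4) are inherited verbatim from the corresponding relations for $\{t_\lambda\}$, since $\Gamma^0 \subseteq \Lambda^0$, since composition, range and source in $\Gamma$ agree with those in $\Lambda$, and since every finite $F \subseteq v\Gamma^n$ is also a finite subset of $v\Lambda^n$. The only relation that uses the standing hypothesis is (TCK5), and the key observation is that $\MCE_\Gamma(\mu,\nu) = \MCE_\Lambda(\mu,\nu)$ for all $\mu,\nu \in \Gamma$: the inclusion $\subseteq$ is automatic, while if $\lambda \in \MCE_\Lambda(\mu,\nu)$ then $\lambda \in \Gamma$ by hypothesis, and writing $\lambda = \mu\mu' = \nu\nu'$ and using that $\Gamma$, being a subgraph, is closed under the factorisation property forces $\mu',\nu' \in \Gamma$, whence $\lambda \in \MCE_\Gamma(\mu,\nu)$. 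Thus (TCK5) for $\{t_\gamma\}$ is exactly (TCK5) for $\{t_\lambda\}$ restricted to $\mu,\nu \in \Gamma$. (This identity also shows $\Gamma$ is finitely-aligned, so that $\TT C^*(\Gamma)$ is defined.) Universality of $\TT C^*(\Gamma)$ then yields the homomorphism $\iota$ with $\iota(s_\gamma) = t_\gamma$.

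For injectivity I would argue through the Fock representation. Let $\pi_\Lambda$ denote the representation of $\TT C^*(\Lambda)$ on $\ell^2(\Lambda)$ induced by the Fock Toeplitz--Cuntz--Krieger family, where $\pi_\Lambda(t_\lambda)\xi_\mu = \xi_{\lambda\mu}$ when $s(\lambda) = r(\mu)$ and $0$ otherwise; by \cite{RaeburnSims:JOT05} this representation is faithful. The crucial point is that the subspace $\ell^2(\Gamma) \subseteq \ell^2(\Lambda)$ is reducing for the generators $\pi_\Lambda(t_\gamma)$, $\gamma \in \Gamma$. Indeed $\pi_\Lambda(t_\gamma)\xi_\mu = \xi_{\gamma\mu} \in \ell^2(\Gamma)$ for $\mu \in \Gamma$ since $\Gamma$ is a subcategory, and $\pi_\Lambda(t_\gamma)^*\xi_\nu$ is either $0$ or $\xi_\mu$ with $\nu = \gamma\mu \in \Gamma$, which forces $\mu \in \Gamma$ because $\Gamma$ is closed under factorisation. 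Hence $\ell^2(\Gamma)$ reduces $C^*(\{\pi_\Lambda(t_\gamma) : \gamma \in \Gamma\})$, and restriction to $\ell^2(\Gamma)$ defines a $*$-homomorphism $\rho$ on this algebra.

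I would then observe that, for $\gamma \in \Gamma$, the operator $\pi_\Lambda(t_\gamma)|_{\ell^2(\Gamma)}$ is precisely the Fock operator of $\gamma$ on $\ell^2(\Gamma)$; that is, $\rho \circ \pi_\Lambda \circ \iota$ sends each $s_\gamma$ to its image under the Fock representation $\pi_\Gamma$ of $\TT C^*(\Gamma)$, so that $\rho \circ \pi_\Lambda \circ \iota = \pi_\Gamma$. Since $\pi_\Gamma$ is faithful (again by \cite{RaeburnSims:JOT05}) and factors through $\iota$, the map $\iota$ must be injective.

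I expect the main obstacle to be the injectivity step --- specifically, verifying cleanly that $\ell^2(\Gamma)$ is a reducing subspace and that the compressed generators recover the Fock representation of $\Gamma$ itself rather than merely a quotient of it; this is exactly where the hypothesis that $\Gamma$ is a subgraph closed under factorisation (and the resulting $\MCE$ identity) does its real work. An alternative I would keep in reserve is to invoke a gauge-invariant uniqueness theorem for Toeplitz algebras: since the gauge action $\beta_z(t_\lambda) = z^{d(\lambda)} t_\lambda$ restricts to $C^*(\{t_\gamma\})$, it would suffice to check that the gap projections $\prod_{\gamma \in E}(t_{v} - t_\gamma t_\gamma^*)$ are nonzero in $\TT C^*(\Lambda)$ for all $v \in \Gamma^0$ and finite $E \subseteq v\Gamma \setminus \{v\}$, which holds because they are already nonzero in the Fock representation of $\Lambda$.
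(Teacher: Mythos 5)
Your proposal is correct, and its existence half coincides with the paper's: both show that $\{t_\gamma : \gamma \in \Gamma\}$ is a Toeplitz--Cuntz--Krieger $\Gamma$-family (the hypothesis $\MCE_\Lambda(\mu,\nu) \subseteq \Gamma$ entering only in (TCK5)) and invoke universality; in fact you make explicit the identity $\MCE_\Gamma(\mu,\nu) = \MCE_\Lambda(\mu,\nu)$ and the consequent finite alignment of $\Gamma$, which the paper leaves implicit. Where you genuinely diverge is injectivity. The paper applies Theorem~3.11 of \cite{SWW} twice: once to $\Lambda$, to see that every projection $\prod_{\lambda \in E}(t_v - t_\lambda t^*_\lambda)$ with $E \subseteq v\Lambda$ finite is nonzero in $\TT C^*(\Lambda)$, and then, as a uniqueness theorem, to $\Gamma$ to conclude that the family $\{t_\gamma\}$ generates a faithful copy of $\TT C^*(\Gamma)$ --- this is exactly the ``alternative kept in reserve'' at the end of your proposal. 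Your primary route --- compressing the Fock representation of $\Lambda$ to the reducing subspace $\ell^2(\Gamma)$ and recognising the compression as the Fock representation of $\Gamma$ --- is also sound: the reducing-subspace computation is precisely where closure of $\Gamma$ under factorisation (automatic for a sub-$k$-graph, by uniqueness of factorisations in $\Lambda$) does its work, and faithfulness of the Fock representation for finitely-aligned $k$-graphs is available from \cite{RaeburnSims:JOT05}. The trade-off: the paper's argument is shorter but leans on a then-unpublished black box, while yours is self-contained modulo the classical Fock-space faithfulness result, at the cost of explicit Hilbert-space bookkeeping. One small simplification: you never need faithfulness of the Fock representation of $\TT C^*(\Lambda)$, only that of $\TT C^*(\Gamma)$, since $\iota(a) = 0$ already forces $\pi_\Gamma(a) = \rho\big(\pi_\Lambda(\iota(a))\big) = 0$.
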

\begin{proof}
The elements $\{t_\gamma : \gamma \in \Gamma\}$ form a Toeplitz-Cuntz-Krieger
$\Gamma$-family in $\TT C^* ( \Lambda )$: the relations all follow from the same
relations for $\Lambda$ (the hypothesis that $\MCE_\Lambda(\mu,\nu) \subseteq \Gamma$ for
all $\mu,\nu \in \Gamma$ ensures that condition (TCK5) holds). So the universal property
of $\TT C^*(\Gamma)$ induces a homomorphism $\iota : \TT C^*(\Gamma) \to \TT
C^*(\Lambda)$ satisfying $\iota(s_\gamma ) = t_\gamma$. Theorem~3.11 of \cite{SWW}
applied to $\Lambda$ implies that $\prod_{\lambda \in E} (t_v - t_\lambda t^*_\lambda)
\not= 0$ for every $v \in \Lambda^0$ and every finite $E \subseteq v\Lambda$. The reverse
implication of the same theorem applied to $\Gamma$ then implies that $\iota$ is
injective.
\end{proof}

\begin{lem}\label{lem:exact sequence}
Suppose that $\Lambda$ is a finitely aligned $k$-graph with no sources and that $H
\subseteq \Lambda^0$ is hereditary. Let $T = \Lambda^0 \setminus H$. Then $H\Lambda$ is a
subgraph such that $\MCE_\Lambda(\mu,\nu) \subseteq H \Lambda$ for all
$\mu,\nu \in H \Lambda$, and $\Lambda T$ is subgraph of
$\Lambda$. Let $\iota : \TT C^*(H\Lambda) \to \TT C^*(\Lambda)$ be the homomorphism of
Lemma~\ref{lem:k-graph inclusion}. There is a homomorphism $\pi : \TT C^*(\Lambda) \to
\TT C^*(\Lambda T)$ satisfying
\[
\pi(t_\lambda)
    = \begin{cases}
        s_\lambda &\text{ if $s(\lambda) \in T$}\\
        0 &\text{ otherwise.}
    \end{cases}
\]
The sum $\sum_{v \in H} t_v$ converges strictly to a full multiplier projection $P_H$ of
$\ker(\pi)$. We have
\[
\ker(\pi) = \clsp\{s_\mu s^*_\nu : s(\mu) = s(\nu) \in H\}\quad\text{ and }\quad
P_H \ker(\pi) P_H = \iota(\TT C^*(H\Lambda)).
\]
\end{lem}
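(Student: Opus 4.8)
The plan is to construct $\pi$ from the universal property of $\TT C^*(\Lambda)$, to pin down $\ker\pi$ by exhibiting an isomorphism $\TT C^*(\Lambda)/\ker\pi \cong \TT C^*(\Lambda T)$, and then to read off the assertions about $P_H$ from the resulting spanning family. The structural claims come first and are quick: if $\mu,\nu \in H\Lambda$ and $\lambda \in \MCE_\Lambda(\mu,\nu)$ then $r(\lambda) = r(\mu) \in H$, so $\lambda \in H\Lambda$; and since $T = \Lambda^0\setminus H$ is co-hereditary, any factorisation of an element of $\Lambda T$ has both factors in $\Lambda T$, so $\Lambda T$ is a subgraph. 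As $\Lambda T$ inherits finite alignment from $\Lambda$, its Toeplitz algebra is defined. I emphasise that I would not expect $\Lambda T$ to have no sources, since $\Lambda$ may have edges from $T$ into $H$; this is why the argument stays at the level of Toeplitz algebras.

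For the existence of $\pi$ I would set $Q_\lambda := s_\lambda$ for $\lambda\in\Lambda T$ and $Q_\lambda := 0$ otherwise, inside $\TT C^*(\Lambda T)$, and verify that $\{Q_\lambda\}$ is a Toeplitz--Cuntz--Krieger $\Lambda$-family; then $\pi$ is induced by universality. The cases of (TCK1)--(TCK3) reduce immediately to the corresponding relations in $\Lambda T$ (using that $s(\mu)=r(\nu)\in T$ forces $s(\mu)\in T$ whenever the relevant product survives). The two places where hereditarity really enters are (TCK4) and (TCK5), and the key observation for both is that $\MCE_\Lambda(\mu,\nu)$ and $\MCE_{\Lambda T}(\mu,\nu)$ differ only in elements $\lambda$ with $s(\lambda)\in H$, for which $Q_\lambda = 0$: an element $\lambda\in\MCE_\Lambda(\mu,\nu)$ has $s(\lambda)\in T$ exactly when $\lambda\in\MCE_{\Lambda T}(\mu,\nu)$, because $\Lambda T$ is closed under factorisation.

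To identify the kernel, set $I_H := \clsp\{t_\mu t^*_\nu : s(\mu) = s(\nu)\in H\}$. Using the product formula $t^*_\beta t_\mu = \sum t_\eta t^*_\zeta$ summed over pairs with $\beta\eta = \mu\zeta\in\MCE_\Lambda(\beta,\mu)$ (see \cite{RaeburnSims:JOT05}), together with the implication $s(\mu)\in H \Rightarrow s(\mu\zeta)\in H$, one checks that $I_H$ is a self-adjoint closed two-sided ideal, and $I_H \subseteq \ker\pi$ is clear since $\pi(t_\mu)=0$ when $s(\mu)\in H$. For the reverse inclusion I would show that $\pi$ descends to an isomorphism $\overline\pi : \TT C^*(\Lambda)/I_H \to \TT C^*(\Lambda T)$ by producing its inverse: the family $\{t_\lambda + I_H : \lambda\in\Lambda T\}$ is a Toeplitz--Cuntz--Krieger $\Lambda T$-family in the quotient, because the $H$-sourced terms in $\MCE_\Lambda(\mu,\nu)$ lie in $I_H$ and so drop out, so universality yields $\rho : \TT C^*(\Lambda T)\to \TT C^*(\Lambda)/I_H$ with $\rho(s_\lambda) = t_\lambda + I_H$. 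Checking that $\overline\pi\circ\rho$ and $\rho\circ\overline\pi$ agree with the identity on generators, using $t_\lambda = t_\lambda t^*_{s(\lambda)}\in I_H$ whenever $s(\lambda)\in H$, gives $\ker\pi = I_H$, which is exactly the claimed spanning description.

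Finally, for $P_H$: the finite partial sums $\sum_{v\in F}t_v$ form an increasing net of mutually orthogonal projections, hence converge strictly to a projection $P_H$, and since $P_H(t_\mu t^*_\nu)$ equals $t_\mu t^*_\nu$ or $0$ according to whether $r(\mu)\in H$, one sees that $P_H$ multiplies $\ker\pi=I_H$ into itself, so $P_H\in M(\ker\pi)$. Fullness follows from $t_\mu t^*_\nu = t_\mu P_H t^*_\nu$ with $t_\mu, t^*_\nu \in I_H$. For the last identity, $P_H\, t_\mu t^*_\nu\, P_H$ equals $t_\mu t^*_\nu$ when $r(\mu),r(\nu)\in H$ and vanishes otherwise; since $r(\mu)\in H$ forces $\mu\in H\Lambda$ (and then $s(\mu)\in H$ automatically), this gives $P_H\ker(\pi)P_H = \clsp\{t_\mu t^*_\nu : \mu,\nu\in H\Lambda,\ s(\mu)=s(\nu)\}$, which is precisely $\iota(\TT C^*(H\Lambda))$ because the injective homomorphism $\iota$ of Lemma~\ref{lem:k-graph inclusion} sends $s_\gamma\mapsto t_\gamma$. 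The main obstacle throughout is the careful comparison of minimal common extensions in $\Lambda$ and $\Lambda T$ and the verification that all $H$-sourced discrepancies are absorbed into $I_H$; once that is in hand, the remaining steps are bookkeeping.
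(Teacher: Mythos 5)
Your proof is correct, and its endgame (the spanning description of $\ker\pi$, the multiplier projection $P_H$, and the corner computation identifying $P_H\ker(\pi)P_H$ with $\iota(\TT C^*(H\Lambda))$) matches the paper's; but the core of your argument takes a genuinely different, more self-contained route. The paper defines $I_H$ to be the ideal of $\TT C^*(\Lambda)$ generated by $\{t_v : v \in H\}$ and obtains the isomorphism $\TT C^*(\Lambda)/I_H \cong \TT C^*(\Lambda T)$ --- hence $\pi$, as the quotient map followed by this isomorphism, with $\ker\pi = I_H$ --- by citing Theorem~4.4 of \cite{SWW} applied with $\mathcal{E} = \emptyset$ and $c \equiv 1$, and it quotes \cite[Lemma~2.1]{MPR2013} or \cite[Lemma~1.2]{BatesPaskEtAl:NYJM00} for the strict convergence of $\sum_{v \in H} t_v$. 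You instead construct $\pi$ directly from the universal property of $\TT C^*(\Lambda)$ via the zero-extended family $\{Q_\lambda\}$, and identify $\ker\pi = I_H$ by building the inverse $\rho : \TT C^*(\Lambda T) \to \TT C^*(\Lambda)/I_H$ from the universal property of $\TT C^*(\Lambda T)$; in effect you reprove the special case of \cite[Theorem~4.4]{SWW} that the paper invokes. The cost is the longer verification of (TCK1)--(TCK5), whose crux you isolate correctly: for $\mu,\nu \in \Lambda T$ one has $\MCE_{\Lambda T}(\mu,\nu) = \{\lambda \in \MCE_\Lambda(\mu,\nu) : s(\lambda) \in T\}$ because $\Lambda T$ is closed under factorisation, so every discrepancy between the two graphs' relations is $H$-sourced and is annihilated by $Q$ (respectively absorbed into $I_H$). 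The benefit is that no appeal is made to \cite{SWW} (a paper listed as in preparation) and the combinatorial mechanism is completely explicit, whereas the paper's citation buys brevity and covers the twisted/relative setting in one stroke. One wording to tighten: an increasing net of projections need not converge strictly in general (if $\ker\pi$ were unital, strict convergence would mean norm convergence), so the strict convergence of $\sum_{v \in F} t_v$ should be justified exactly by the computation you give immediately afterwards --- the partial sums are uniformly bounded and act eventually constantly on each spanning element $t_\mu t^*_\nu$ of $I_H$ (giving $t_\mu t^*_\nu$ if $r(\mu) \in H$, and $0$ otherwise) --- which is precisely the argument behind the lemmas the paper cites.
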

\begin{proof}
Establishing the properties of $H \Lambda$ and $\Lambda T$ is straightforward. Let $I_H$
be the ideal of $\TT C^*(\Lambda)$ generated by $\{p_v : v \in H\}$. Theorem~4.4 of
\cite{SWW} applied with $\mathcal{E} = \emptyset$ and $c \equiv 1$ shows that there is an isomorphism $\TT C^*(\Lambda)/I_H \cong \TT C^*(\Lambda T)$ which carries $t_\lambda + I_H$ to $s_\lambda$
for $\lambda \in \Lambda T$. Composing this with the quotient map from $\TT C^*(\Lambda)$
to $\TT C^*(\Lambda)/I_H$ gives the desired homomorphism $\pi$. It is routine to check that
the sum $\sum_{v \in H} t_v$ converges to a multiplier of $\TT C^*(\Lambda)$ (see
\cite[Lemma 2.1]{MPR2013} or \cite[Lemma 1.2]{BatesPaskEtAl:NYJM00}). So $\ker(\pi)
= I_H$, and $P_H \ker(\pi) P_H$ is a full hereditary subalgebra
of $\ker(\pi)$. Since $H$ is hereditary, $\clsp\{s_\mu s^*_\nu : s(\mu) = s(\nu) \in H\}$ is an ideal which is clearly contained in $I_H$ and contains all its generators, so the two are equal.
We have $P_H s_\lambda = s_\lambda$ if $r(\lambda) \in H$ and $P_H s_\lambda = 0$
otherwise, and so
\[
P_H \ker(\pi) P_H = P_H \TT C^*(\Lambda) P_H = \clsp\{s_\mu s^*_\nu : \mu,\nu \in H\Lambda\} =
\iota(\TT C^*(H\Lambda)).\qedhere
\]
\end{proof}

Recall from \cite[Section~2.2]{Pedersen} (see also \cite[\S15.3]{Blackadar:K-theory}) that if $B_1,
B_2$ are $C^*$-algebras and $q_i : B_i \to C$ is a homomorphism for each $i$, then the
\emph{pullback} $B_1 \oplus_C B_2$ is the subalgebra $\{(a,b) \in B_1 \oplus B_2 : q_1(a)
= q_2(b)\}$ of $B_1 \oplus B_2$. It has the universal property that
\begin{enumerate}
\item  the canonical maps $\pi_i : B_1 \oplus_C B_2 \to B_i$ satisfy $q_1 \circ \pi_1 = q_2 \circ \pi_2$ and $\ker \pi_1 \cap \ker \pi_2 = \{0\}$; and
\item if $\psi_i : A \to B_i$ are homomorphisms such that $q_1 \circ \psi_1 = q_2 \circ \psi_2$, then there is a unique homomorphism $\psi : A \to B_1 \oplus_C B_2$ for which the following diagram commutes:
\end{enumerate}
\[
\begin{tikzpicture}[>=stealth]
    \node (C) at (3,0) {$C$};
    \node (B_1) at (0,0) {$B_1$};
    \node (B_2) at (3,3) {$B_2$};
    \node (sum) at (1.5,1.5) {$B_1 \oplus_C B_2$};
    \node (A) at (0,3) {$A$};
    \draw[->] (B_1)--(C) node[anchor=south, pos=0.5, inner sep=0.5pt] {\small$q_1$};
    \draw[->] (B_2)--(C) node[anchor=west, pos=0.5, inner sep=0.5pt] {\small $q_2$};
    \draw[->] (A)--(B_1) node[anchor=east, pos=0.5, inner sep=0.5pt] {\small$\psi_1$};
    \draw[->] (A)--(B_2) node[above, pos=0.5, inner sep=0.5pt] {\small$\psi_2$};
    \draw[->] (sum)--(B_1) node[anchor=north west, pos=0.5, inner sep=0.5pt] {\small$\pi_1$};
    \draw[->] (sum)--(B_2) node[anchor=north west, pos=0.5, inner sep=0.5pt] {\small$\pi_2$};
    \draw[dashed,->] (A)--(sum) node[anchor=north east, pos=0.5, inner sep=0.5pt] {\small$\psi$};
\end{tikzpicture}
\]

\begin{thm}\label{thm:pullback}
Let $\Lambda_1, \Lambda_2$ and $\Gamma$ be finitely aligned $k$-graphs with no sources.
Suppose that for $i = 1,2$ we have an injective $k$-graph morphism $\phi_i : \Gamma
\hookrightarrow \Lambda_i$ such that $\phi_i (\Gamma^0)$ is a co-hereditary subgraph of
$\Lambda_i$. Let $\pi_i : \TT C^*(\Lambda_i) \to \TT C^*(\Gamma)$ be the homomorphism
obtained from Lemma~\ref{lem:exact sequence} and the isomorphism of $~\Gamma$ with
$\phi_i(\Gamma)$, and form the pullback $C^*$-algebra $\TT C^*(\Lambda_1) \oplus_{\TT
C^*(\Gamma)} \TT C^*(\Lambda_2)$ with respect to $\pi_1,\pi_2$. Let $\phi :
\phi_1(\Gamma) \to \phi_2(\Gamma)$ be the isomorphism $\phi_1(\lambda) \mapsto
\phi_2(\lambda)$, and let $\sim_\phi$ be the equivalence relation of
Example~\ref{ex:disjoint_union_equiv}. For $i = 1,2$, let $\{s^i_\lambda : \lambda \in \Lambda_i\}$
denote the universal Toeplitz-Cuntz-Krieger family in $\TT C^*(\Lambda_i)$, and let
$\{s_{[\lambda]} : \lambda \in (\Lambda_1 \sqcup \Lambda_2)/{\sim_\phi}\}$ be the universal
generating family in $\TT C^*((\Lambda_1 \sqcup \Lambda_2)/{\sim_\phi})$. Then there is an
isomorphism
\[
\theta : \TT C^*((\Lambda_1 \sqcup \Lambda_2)/{\sim_\phi}) \to
\TT C^*(\Lambda_1) \oplus_{\TT C^*(\Gamma)} \TT C^*(\Lambda_2)
\]
such that
\begin{equation} \label{eq:thetadef}
\theta(s_{[\lambda]}) = \begin{cases}
    (s^1_\lambda, 0) &\text{ if $\lambda \in \Lambda_1 \setminus \Gamma$}\\
    (0, s^2_\lambda) &\text{ if $\lambda \in \Lambda_2 \setminus \Gamma$}\\
    (s^1_{\phi_1(\gamma)}, s^2_{\phi_2(\gamma)}) &\text{ if $[\lambda] = \{\phi_1(\gamma), \phi_2(\gamma)\}$.}
\end{cases}
\end{equation}
\end{thm}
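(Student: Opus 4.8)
The plan is to build $\theta$ from the universal property of the pullback, prove surjectivity by an ideal-theoretic argument, and prove injectivity using the gauge-invariant uniqueness theorem (Theorem~3.11 of \cite{SWW}). Throughout write $Q := (\Lambda_1 \sqcup \Lambda_2)/{\sim_\phi}$ and $P := \TT C^*(\Lambda_1) \oplus_{\TT C^*(\Gamma)} \TT C^*(\Lambda_2)$. By Example~\ref{ex:disjoint_union_equiv} and Proposition~\ref{prp:quotient k-graph}, $Q$ is a $k$-graph, and since the quotient map is a surjective degree-preserving functor one checks readily that $Q$ has no sources and --- because every common extension in $Q$ has a representative that is a common extension in $\Lambda_1$ or in $\Lambda_2$ --- that $Q$ is finitely aligned. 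For $i = 1,2$ let $\iota_i : \Lambda_i \to Q$, $\iota_i(\lambda) = [\lambda]$, be the canonical injective $k$-graph morphism. Because $\phi_i(\Gamma^0)$ is co-hereditary and the only identifications made by $\sim_\phi$ occur along $\Gamma$, the image $\iota_i(\Lambda_i)$ is a co-hereditary subgraph of $Q$ whose complementary hereditary vertex set is $\widehat H_i := \{[v] : v \in \Lambda_j^0 \setminus \phi_j(\Gamma^0)\}$ (where $j \ne i$); moreover $Q\,\iota_i(\Lambda_i^0) = \iota_i(\Lambda_i) \cong \Lambda_i$.

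First I would construct $\theta$. Applying Lemma~\ref{lem:exact sequence} to $Q$ with the hereditary set $\widehat H_i$ and composing with the isomorphism $\TT C^*(\iota_i(\Lambda_i)) \cong \TT C^*(\Lambda_i)$ yields a homomorphism $\psi_i : \TT C^*(Q) \to \TT C^*(\Lambda_i)$ with $\psi_i(s_{[\lambda]}) = s^i_\lambda$ when $[\lambda]$ has a representative in $\Lambda_i$ and $\psi_i(s_{[\lambda]}) = 0$ otherwise. A direct check on generators, using that $\pi_i(s^i_{\phi_i(\gamma)}) = s_\gamma$ while $\pi_i(s^i_\lambda) = 0$ for $\lambda \in \Lambda_i \setminus \phi_i(\Gamma)$, shows $\pi_1 \circ \psi_1 = \pi_2 \circ \psi_2$. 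The universal property of the pullback then produces a homomorphism $\theta = (\psi_1, \psi_2) : \TT C^*(Q) \to P$, and reading off $(\psi_1(s_{[\lambda]}), \psi_2(s_{[\lambda]}))$ in each of the three cases recovers exactly formula~\eqref{eq:thetadef}.

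Next I would prove $\theta$ surjective. Set $I_1 := \ker\pi_1 \oplus 0$ and $I_2 := 0 \oplus \ker\pi_2$, which are ideals of $P$. By Lemma~\ref{lem:exact sequence}, $\ker\pi_1 = \clsp\{s^1_\mu (s^1_\nu)^* : s(\mu) = s(\nu) \in \Lambda_1^0 \setminus \phi_1(\Gamma^0)\}$, and each spanning element is the first coordinate of $\theta(s_{[\mu]} s_{[\nu]}^*)$ (the second coordinate vanishing); hence $I_1 \subseteq \operatorname{im}\theta$, and symmetrically $I_2 \subseteq \operatorname{im}\theta$. Since $\pi_1, \pi_2$ are surjective, the map $(a,b) \mapsto \pi_1(a)$ identifies $P/(I_1 + I_2)$ with $\TT C^*(\Gamma)$, and the elements $\theta(s_{[\phi_1(\gamma)]}) = (s^1_{\phi_1(\gamma)}, s^2_{\phi_2(\gamma)})$ map onto the generators $s_\gamma$; thus $\operatorname{im}\theta$ surjects onto $P/(I_1 + I_2)$. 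Together with $I_1 + I_2 \subseteq \operatorname{im}\theta$ this gives $\operatorname{im}\theta = P$.

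Finally, for injectivity I would apply the gauge-invariant uniqueness theorem. The gauge action on $\TT C^*(Q)$ and the coordinatewise gauge action on $P$ make $\theta$ equivariant, so by Theorem~3.11 of \cite{SWW} it suffices to show $\theta$ does not annihilate any projection $\prod_{[\lambda] \in E}(s_{[v]} - s_{[\lambda]} s_{[\lambda]}^*)$ with $[v] \in Q^0$ and $E \subseteq [v]Q$ finite. Writing the image in each coordinate and using that $s^i_{v}$ dominates every $s^i_\lambda (s^i_\lambda)^*$, the first coordinate collapses to $\prod_{\lambda \in F_1}(s^1_{v_1} - s^1_\lambda (s^1_\lambda)^*)$, where $v_1$ is the $\Lambda_1$-representative of $[v]$ and $F_1$ is the finite set of $\Lambda_1$-representatives of elements of $E$; this is nonzero by Theorem~3.11 applied to $\Lambda_1$. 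When $[v]$ has no $\Lambda_1$-representative the same argument applies in the second coordinate. Hence every such projection has nonzero image, $\theta$ is injective, and so $\theta$ is an isomorphism. The main obstacle is precisely this last step at a shared vertex $[v] = \{\phi_1(\gamma), \phi_2(\gamma)\}$: there $E$ may mix edges from $\Lambda_1$ and from $\Lambda_2$, and common extensions in $Q$ of two $\Lambda_1$-morphisms can leave $\iota_1(\Lambda_1)$, so one cannot simply transport the relation back to a single $\Lambda_i$; the resolution is that the $\Lambda_2$-edges contribute only the full projection $s^1_{v_1}$ in the first coordinate and drop out, leaving a genuine $\Lambda_1$-gap projection that the uniqueness theorem for $\Lambda_1$ guarantees is nonzero.
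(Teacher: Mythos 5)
Your proposal is correct, and while your construction of $\theta$ is identical to the paper's --- both apply Lemma~\ref{lem:exact sequence} to the quotient $(\Lambda_1 \sqcup \Lambda_2)/{\sim_\phi}$ with the hereditary vertex sets $\widehat H_i$ to get the coordinate maps $\psi_i$, check $\pi_1 \circ \psi_1 = \pi_2 \circ \psi_2$, and invoke the universal property of the pullback (your explicit check that the quotient is finitely aligned with no sources is in fact more careful than the paper, which leaves this implicit) --- you take a genuinely different route to invertibility. The paper verifies the three hypotheses of \cite[Proposition~3.1]{Pedersen}: triviality of $\ker\psi_1 \cap \ker\psi_2$ (via gauge-invariance of this ideal and Theorem~4.6 of \cite{SWW}), the compatibility condition $\pi_2^{-1}(\pi_1(\TT C^*(\Lambda_1))) = \psi_2(\TT C^*((\Lambda_1\sqcup\Lambda_2)/{\sim_\phi}))$, and the exactness condition $\psi_1(\ker\psi_2) = \ker\pi_1$; this recognises $\TT C^*((\Lambda_1\sqcup\Lambda_2)/{\sim_\phi})$ as itself a pullback, and the inverse of $\theta$ then comes for free from the universal property. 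You instead prove surjectivity by hand --- your ideals $I_1 = \ker\pi_1 \oplus 0$ and $I_2 = 0 \oplus \ker\pi_2$ lie in the image by the kernel description of Lemma~\ref{lem:exact sequence} and co-heredity of $\phi_1(\Gamma)$, and the image covers $P/(I_1+I_2) \cong \TT C^*(\Gamma)$ because it contains the generators $s_\gamma$ --- and injectivity by noting that $\ker\theta = \ker\psi_1 \cap \ker\psi_2$ and running the same gap-projection collapse that the paper uses for its condition (1), but concluding via the Toeplitz uniqueness theorem (Theorem~3.11 of \cite{SWW}, the tool already used in Lemma~\ref{lem:k-graph inclusion}) rather than via the gauge-invariant-ideal theorem (Theorem~4.6 of \cite{SWW}). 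Both routes are sound: yours is more elementary and self-contained, avoiding Pedersen's recognition criterion and the structure theory of gauge-invariant ideals entirely, at the cost of explicit surjectivity bookkeeping; the paper's approach outsources both injectivity and surjectivity to the universal property once Pedersen's hypotheses are checked. Two small points: as it is used in Lemma~\ref{lem:k-graph inclusion}, Theorem~3.11 of \cite{SWW} requires no gauge hypothesis, so your equivariance observation is superfluous (though harmless); and in the gap projections one should take $E \subseteq [v]Q \setminus \{[v]\}$, since a factor indexed by $[v]$ itself makes the product trivially zero --- a slip the paper's own proof also makes.
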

\begin{proof}
Let $\Sigma := (\Lambda_1 \sqcup \Lambda_2)/{\sim_\phi}$. The sets $H_1 := \{[v] : v \in
\Lambda_1^0 \setminus \phi_1(\Gamma)\}$ and $H_2 := \{[v] : v \in \Lambda_2^0 \setminus
\phi_2(\Gamma)\}$ are hereditary in $\Lambda_1$ and $\Lambda_2$ respectively. Let $T_i :=
\Sigma^0 \setminus H_i$ for $i=1,2$. Then $\lambda \mapsto [\lambda]$ is an isomorphism
of $\Lambda_i$ onto $\Sigma T_{3-i}$ for $i = 1,2$, and so Lemma~\ref{lem:exact sequence}
implies that there are homomorphisms $\psi_i : \TT C^*(\Sigma) \to \TT C^*(\Lambda_i)$
such that $\psi_i(t_{[\lambda]}) = s_\lambda$ for $\lambda \in \Lambda_i$. We then have
\begin{equation}\label{eq:theta formula}
\pi_1 \circ \psi_1(t_{[\lambda]})
    = \begin{cases}
        s_\lambda &\text{ if $\lambda \in \Gamma$}\\
        0 &\text{ otherwise,}
    \end{cases}
\end{equation}
The universal property of the pullback now implies that there exists a homomorphism $\theta : \TT C^*(\Sigma) \to \TT C^*(\Lambda_1) \oplus_{\TT C^*(\Gamma)} \TT C^*(\Lambda_2)$ satisfying \eqref{eq:thetadef}.

To see that $\theta$ is an isomorphism, we will invoke Proposition~3.1 of \cite{Pedersen} to see that $\TT C^*(\Sigma)$ is itself a pullback. We must show that
\begin{enumerate}
\item\label{it:ker int 0} $\ker(\psi_1) \cap \ker(\psi_2) = 0$,
\item\label{it:compatible} $\pi_2^{-1}(\pi_1(\TT C^*(\Lambda_1)) = \psi_2(\TT C^*(\Sigma))$, and
\item\label{it:exact} $\psi_1(\ker(\psi_2)) = \ker(\pi_1)$.
\end{enumerate}
For~(\ref{it:ker int 0}), observe first that since $\psi_1$ and $\psi_2$ are equivariant
for the gauge actions on $\TT C^*(\Sigma)$ and the $\TT C^*(\Lambda_i)$, the ideal
$\ker\psi_1 \cap \ker\psi_2$ is gauge invariant. Each $[v] \in \Sigma^0$ belongs to
either $\Lambda_1^0$ or $\Lambda_2^0$, and so no $p_{[v]}$ belongs to $\ker\psi_1 \cap
\ker\psi_2$. If $[v] \in \Sigma^0$ and $F \subseteq [v]\Sigma \setminus \{[v]\}$ is
finite, then $v \in \Lambda_i^0$ for some $i$. Now
\[
\psi_i\Big(\prod_{[\mu] \in F} (p_{[v]} - s_{[\mu]} s^*_{[\mu]})\Big)
    = \prod_{\mu \in \Lambda_1, [\mu] \in F} (p^i_{v} - s^i_{\mu} (s^i_{\mu})^*) \not= 0.
\]
So $\prod_{[\mu] \in F} (p_{[v]} - s_{[\mu]} s^*_{[\mu]}) \not\in \ker\psi_1 \cap \ker\psi_2$. Theorem~4.6 of \cite{SWW} implies that $\ker\psi_1 \cap \ker\psi_2 = \{0\}$.

For~(\ref{it:compatible}), the containment $\psi_2(\TT C^*(\Sigma)) \subseteq \pi_2^{-1}(\pi_1(\TT C^*(\Lambda_1)))$ is immediate because $\pi_1 \circ \psi_1 = \pi_2 \circ \psi_2$, and the reverse containment is clear because $\psi_2$ is surjective.

For~(\ref{it:exact}), observe that Lemma~\ref{lem:exact sequence} implies that
\[
\ker(\psi_2) = \clsp\big\{s_{[\mu]} s^*_{[\nu]} : s([\mu]) = s([\nu]) \in \{[v] : v \in \Lambda_1^0 \setminus \phi_1(\Gamma^0)\}\big\},
\]
and that $\ker(\pi_1) = \clsp\{s^1_\mu (s^1_\nu)^* : s(\mu) = s(\nu) \in \Lambda_1^0
\setminus \phi_1(\Gamma^0)\}$. If $s([\mu]) = s([\nu]) \in \{[v] : v \in \Lambda_1^0
\setminus \phi_1(\Gamma^0)\}$, then $\psi_1(s_{[\mu]} s^*_{[\nu]}) = s^1_\mu
(s^1_\nu)^*$, so $\psi_1(\ker(\psi_2)) = \ker(\pi_1)$ as claimed.

We have now established the hypotheses of \cite[Proposition~3.1]{Pedersen}, which then
implies that $\TT C^*(\Sigma)$ is a pullback of $\TT C^*(\Lambda_1)$ and $\TT
C^*(\Lambda_2)$ over $\TT C^*(\Gamma)$. The universal property of this pullback therefore
yields a homomorphism $\eta : \TT C^*(\Lambda_1) \oplus_{\TT C^*(\Gamma)} \TT
C^*(\Lambda_2) \to \TT C^*(\Sigma)$ which is inverse to $\theta$.
\end{proof}

Recall from \cite{Sims:CJM06} that if $\Lambda$ is a finitely aligned $k$-graph then a
hereditary set $H \subseteq \Lambda^0$ is \emph{saturated} if whenever $E \subseteq
v\Lambda$ is finite exhaustive and $s(E) \subseteq H$ we have $v \in H$.

\begin{cor}\label{cor:CK pullback}
Let $\Lambda_1$ and $\Lambda_2$ and $\Gamma$ be finitely aligned $k$-graphs with no
sources. Suppose that for $i = 1,2$, we have an injective $k$-graph morphism
$\phi_i : \Gamma \hookrightarrow \Lambda_i$ such that  $\phi_i(\Gamma^0)$ is a
co-hereditary subgraph of $\Lambda_i$ and $H_i := \Lambda^0_i \setminus \phi_i(\Gamma^0)$
is saturated. Let $\pi_i : C^*(\Lambda_i) \to C^*(\Gamma)$ be the homomorphism obtained
from Lemma~\ref{lem:exact sequence}, and form the pullback $C^*$-algebra $C^*(\Lambda_1)
\oplus_{C^*(\Gamma)} C^*(\Lambda_2)$ with respect to $\pi_1$ and $\pi_2$. Let $\phi : \phi_1(\Gamma) \to \phi_2(\Gamma)$ be
the isomorphism $\phi_1(\lambda) \mapsto \phi_2(\lambda)$, and let $\sim_\phi$ be the
equivalence relation of Example~\ref{ex:disjoint_union_equiv}. The
isomorphism $\theta : \TT C^*((\Lambda_1 \sqcup \Lambda_2)/{\sim_\phi}) \to \TT
C^*(\Lambda_1) \oplus_{\TT C^*(\Gamma)} \TT C^*(\Lambda_2)$ of Theorem \ref{thm:pullback} descends to an isomorphism
\[
\tilde\theta : C^*((\Lambda_1 \sqcup \Lambda_2)/{\sim_\phi}) \to
C^*(\Lambda_1) \oplus_{C^*(\Gamma)} C^*(\Lambda_2).
\]
\end{cor}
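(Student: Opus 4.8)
The plan is to show that the Toeplitz isomorphism $\theta$ of Theorem~\ref{thm:pullback} carries the ideal defining $C^*((\Lambda_1\sqcup\Lambda_2)/{\sim_\phi})$ onto the ideal defining the pullback $C^*(\Lambda_1)\oplus_{C^*(\Gamma)}C^*(\Lambda_2)$, and then to pass to the quotients. Write $\Sigma:=(\Lambda_1\sqcup\Lambda_2)/{\sim_\phi}$ and let $q_\bullet\colon\TT C^*(\bullet)\to C^*(\bullet)$ be the canonical surjection for $\bullet\in\{\Sigma,\Lambda_1,\Lambda_2,\Gamma\}$, so that $C^*(\bullet)=\TT C^*(\bullet)/J_\bullet$ where $J_\bullet=\ker q_\bullet$ is generated by the gap projections $\prod_{\lambda\in E}(s_v-s_\lambda s_\lambda^*)$ attached to vertices $v$ and finite exhaustive sets $E$. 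I would first record that, because $H_i$ is saturated, the map $\pi_i$ of Lemma~\ref{lem:exact sequence} descends from $\TT C^*(\Lambda_i)$ to the homomorphism $\pi_i\colon C^*(\Lambda_i)\to C^*(\Gamma)$ appearing in the corollary, so that the squares $q_\Gamma\circ\pi_i=\pi_i\circ q_{\Lambda_i}$ commute. These squares make the coordinatewise map $Q\colon\TT C^*(\Lambda_1)\oplus_{\TT C^*(\Gamma)}\TT C^*(\Lambda_2)\to C^*(\Lambda_1)\oplus_{C^*(\Gamma)}C^*(\Lambda_2)$, $(a,b)\mapsto(q_{\Lambda_1}(a),q_{\Lambda_2}(b))$, well defined, and I will study the composite $R:=Q\circ\theta\colon\TT C^*(\Sigma)\to C^*(\Lambda_1)\oplus_{C^*(\Gamma)}C^*(\Lambda_2)$.

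The heart of the argument is the computation of $\theta$ on a gap projection $\Delta^\Sigma_{[v],E}$. Using that $\theta(s_{[\lambda]})$ has a zero coordinate whenever $\lambda$ lies in only one of the $\Lambda_i$ (a factor from the opposite graph then contributes $s_v$, which is absorbed), one finds
\[
\theta\big(\Delta^\Sigma_{[v],E}\big)=\big(\Delta^{\Lambda_1}_{v,E_1},\,\Delta^{\Lambda_2}_{v,E_2}\big),
\qquad E_i:=\{\mu\in E:\mu\in\Lambda_i\}.
\]
The key structural point --- and the step I expect to be the main obstacle --- is to prove that each $E_i$ is a \emph{finite exhaustive} subset of $v\Lambda_i$. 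For $[v]\in H_i$ this is immediate because $[v]\Sigma$ is canonically $v\Lambda_i$. For $[v]\in\phi_i(\Gamma^0)$ I would argue as follows: since $H_1$ and $H_2$ are disjoint and hereditary, every path with range in $\phi_i(\Gamma^0)$ lies entirely within $\Lambda_1$ or entirely within $\Lambda_2$, and the minimal common extension of a path in $\Lambda_1\setminus\Gamma$ with one in $\Lambda_2\setminus\Gamma$ is empty. Hence a path $\mu\in v\Lambda_1$ can have nonempty $\MCE_\Sigma$ only with elements of $E_1$, so exhaustiveness of $E$ forces exhaustiveness of $E_1$ --- \emph{provided $E_1\neq\emptyset$}. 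Were $E_1$ empty, then $E=E_2$ would be a finite exhaustive subset of $v\Lambda_2$ with $s(E)\subseteq H_2$ while $v\notin H_2$, contradicting saturation of $H_2$; this is exactly where the saturation hypothesis is essential. The symmetric argument handles $E_2$.

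Granting the computation, each $\Delta^{\Lambda_i}_{v,E_i}\in J_{\Lambda_i}=\ker q_{\Lambda_i}$, whence $R(\Delta^\Sigma_{[v],E})=0$. Therefore $J_\Sigma\subseteq\ker R$, and $R$ descends to a homomorphism $\tilde\theta\colon C^*(\Sigma)\to C^*(\Lambda_1)\oplus_{C^*(\Gamma)}C^*(\Lambda_2)$ satisfying the asserted formula on generators.

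It then remains to check that $\tilde\theta$ is an isomorphism. It is equivariant for the gauge actions (the pullback carries the diagonal gauge action, which is well defined because each $\pi_i$ is gauge-equivariant), and $\tilde\theta$ sends each vertex projection $s_{[v]}$ to a nonzero projection, namely $(s^1_v,0)$, $(0,s^2_v)$, or $(s^1_{\phi_1(\gamma)},s^2_{\phi_2(\gamma)})$. The gauge-invariant uniqueness theorem for finitely aligned $k$-graph algebras \cite{SWW} then yields injectivity. Surjectivity is routine: the image is a $C^*$-subalgebra containing $(s^1_\lambda,0)$ for $\lambda\in\Lambda_1\setminus\Gamma$, $(0,s^2_\mu)$ for $\mu\in\Lambda_2\setminus\Gamma$, and $(s^1_{\phi_1(\gamma)},s^2_{\phi_2(\gamma)})$ for $\gamma\in\Gamma$, and the same corner/spanning argument used for the reverse containments in the proof of Theorem~\ref{thm:pullback} shows that these generate the pullback.
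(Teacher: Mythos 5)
Your architecture is the same as the paper's: push the Toeplitz isomorphism $\theta$ through the coordinatewise quotient map, verify that the images of the generators satisfy (CK) by reducing a finite exhaustive set $E \subseteq [v]\Sigma$ to the sets $E_i = E \cap \Lambda_i$, then get injectivity from the gauge-invariant uniqueness theorem and surjectivity from a generation argument. You have also correctly isolated the crux --- that each $E_i$ is finite exhaustive in $v\Lambda_i$ when $[v] \in \Gamma^0$ (a step the paper asserts without proof) --- and correctly spotted that saturation is what rules out $E_i = \emptyset$. However, your justification of the crux has a genuine gap.

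The claim that ``a path $\mu\in v\Lambda_1$ can have nonempty $\MCE_\Sigma$ only with elements of $E_1$'' is false whenever $\mu$ lies in the image of $\Gamma$. Your disjointness observation applies only when $s(\mu)\in H_1$: in that case every common extension of $\mu$ has source in $H_1$ by heredity, so it cannot also extend a path with source in $H_2$. But a $\Gamma$-path $\mu$ is identified with $\phi_2(\gamma)$ and can admit common extensions passing into $H_2$; for instance $\mu=v$ itself satisfies $\MCE_\Sigma(v,\lambda)=\{\lambda\}\neq\emptyset$ for every $\lambda\in E\setminus\Lambda_1$. So for $\mu \in v\phi_1(\Gamma)$, exhaustiveness of $E$ may only hand you a witness in $E_2\setminus\Gamma$, and your argument produces nothing in $E_1$; there is the further subtlety that a witness $\lambda\in E_1$ only helps if some common extension lies in $\Lambda_1$, whereas for $\mu,\lambda$ both in $\Gamma$ every element of $\MCE_\Sigma(\mu,\lambda)$ could a priori have source in $H_2$. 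Saturation invoked once at the vertex $v$, as you use it, cannot close this. A correct argument runs as follows: suppose no $\lambda\in E_1$ satisfies $\MCE_{\Lambda_1}(\mu,\lambda)\neq\emptyset$. The set $\Ext_\Sigma(\mu;E)=\{\alpha\in s(\mu)\Sigma : \mu\alpha\in\MCE_\Sigma(\mu,\lambda)\text{ for some }\lambda\in E\}$ is finite exhaustive at $s(\mu)$ --- this is the standard ``Ext'' lemma for finitely aligned $k$-graphs from \cite{RaeburnSimsEtAl:JFA04}, an ingredient your proof never invokes. Under the supposition, every $\alpha$ in this set has source in $H_2$: if $s(\alpha)\in H_1\cup\Gamma^0$ then $\mu\alpha$ lies in $\Lambda_1$, hence so does its initial segment $\lambda=(\mu\alpha)(0,d(\lambda))$, giving $\lambda\in E_1$ and $\mu\alpha\in\MCE_{\Lambda_1}(\mu,\lambda)$, a contradiction. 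But then $\Ext_\Sigma(\mu;E)$ is a finite exhaustive subset of $s(\mu)\Lambda_2$ with all sources in $H_2$, while $s(\mu)\in\Gamma^0=\Lambda_2^0\setminus H_2$; this contradicts saturation of $H_2$, applied at the vertex $s(\mu)$ rather than at $v$. With this replacement (and its mirror image for $E_2$) your proof goes through, and the remaining steps match the paper's.
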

\begin{proof}
Let $\theta$ be the isomorphism of Theorem~\ref{thm:pullback}, and let $q_0 : \TT C^*(\Lambda_1) \oplus \TT C^*(\Lambda_2) \to C^*(\Lambda_1) \oplus C^*(\Lambda_2)$ be the quotient map. Then $q_0$ restricts to a homomorphism
\[
q : \TT C^*(\Lambda_1) \oplus_{\TT C^*(\Gamma)} \TT C^*(\Lambda_2)
	\to C^*(\Lambda_1) \oplus_{C^*(\Gamma)} C^*(\Lambda_2).
\]
Define $t_{[\lambda]} := q(\theta(s_{[\lambda]}))$ for $\lambda \in \Sigma := (\Lambda_1 \sqcup \Lambda_2)/{\sim_\phi}$. We claim that the $t_{[ \lambda]}$ satisfy relation~(CK). Suppose that $E \subseteq v\Sigma$ is finite exhaustive. If $v \not\in \Gamma$, then $E$ is finite exhaustive in $\Lambda_i$ for some $i$ and then~(CK) follows from~(CK) for $\Lambda_i$. Otherwise, $E \cap \Lambda_i$ is exhaustive for each of $i = 1,2$, and then~(CK) for $\Sigma$ follows from~(CK) for $\Lambda_1$ and $\Lambda_2$. So there is a homomorphism $\tilde\theta$ as claimed, and this $\tilde\theta$ is surjective because $\theta$ is.

To see that $\theta$ is injective, we apply the gauge-invariant uniqueness theorem. Let
$\gamma^i$ denote the gauge action on $C^*(\Lambda_i)$. Then the action $\gamma^1 \oplus
\gamma^2$ of $\T^k$ on $C^*(\Lambda_1) \oplus C^*(\Lambda_2)$ restricts to an action
$\beta$ of $\T^k$ on the subalgebra $C^*(\Lambda_1) \oplus_{C^*(\Gamma)} C^*(\Lambda_2)$.
The gauge action $\gamma$ on $C^*\big((\Lambda_1 \sqcup \Lambda_2)/{\sim_\phi}\big)$ then
satisfies $\beta_z \circ \tilde\theta = \tilde\theta \circ \gamma_z$ for all $z$. Since
each $[v] \in (\Lambda_1 \sqcup \Lambda_2)/{\sim_\phi}$ has a representative $v$ in
either $\Lambda_1^0$ or $\Lambda_2^0$ we have $\tilde\theta(p_{[v]}) \not= 0$ for all
$[v] \in \big((\Lambda_1 \sqcup \Lambda_2)/{\sim_\phi}\big)^0$. So the gauge-invariant
uniqueness theorem \cite[Theorem~3.1]{RaeburnSimsEtAl:JFA04} implies that $\tilde\theta$
is injective.
\end{proof}

For the following result, observe that under the hypotheses of
Corollary~\ref{cor:CK pullback}, the isomorphism $\theta : C^*((\Lambda_1 \sqcup \Lambda_2)/{\sim_\phi}) \to
C^*(\Lambda_1) \oplus_{C^*(\Gamma)} C^*(\Lambda_2)$ determines an inclusion $\tilde\theta
: C^*((\Lambda_1 \sqcup \Lambda_2)/{\sim_\phi}) \to C^*(\Lambda_1) \oplus C^*(\Lambda_2)$
satisfying the formula~\eqref{eq:theta formula}. Write $\pi_i : C^*(\Lambda_i) \to
C^*(\Gamma)$ for the homomorphisms of Lemma~\ref{lem:exact sequence}. These induce
homomorphisms $(\pi_i)_* : K_*(C^*(\Lambda_1)) \oplus K_*(C^*(\Lambda_2)) \to
K_*(C^*(\Gamma))$.

\begin{cor}\label{cor:Meyer-Vietoris}
With the hypotheses of Corollary~\ref{cor:CK pullback}, there is a $6$-term exact
sequence in $K$-theory as follows:
\[
\begin{tikzpicture}[>=stealth, xscale=1.5]
    \node (K0A) at (0,0) {$K_0\big(C^*((\Lambda_1 \sqcup \Lambda_2)/{\sim_\phi})\big)$};
    \node (K0sum) at (4,0) {$K_0(C^*(\Lambda_1)) \oplus K_0(C^*(\Lambda_2))$};
    \node (K0Q) at (8,0) {$K_0(C^*(\Gamma))$};
    \node (K1A) at (8,-2) {$K_1\big(C^*((\Lambda_1 \sqcup \Lambda_2)/{\sim_\phi})\big)$};
    \node (K1sum) at (4,-2) {$K_1(C^*(\Lambda_1)) \oplus K_0(C^*(\Lambda_2))$};
    \node (K1Q) at (0,-2) {$K_1(C^*(\Gamma))$};
    \draw[->] (K0A)--(K0sum) node[pos=0.5, above] {\small$\iota_*$};
    \draw[->] (K0sum)--(K0Q) node[pos=0.5, above] {\small$(\pi_1)_* - (\pi_2)_*$};
    \draw[->] (K0Q)--(K1A);
    \draw[->] (K1A)--(K1sum) node[pos=0.5, above] {\small$\iota_*$};
    \draw[->] (K1sum)--(K1Q) node[pos=0.5, above] {\small$(\pi_1)_* - (\pi_2)_*$};
    \draw[->] (K1Q)--(K0A);
\end{tikzpicture}
\]
\end{cor}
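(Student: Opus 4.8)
The plan is to recognise the desired six-term sequence as the Mayer--Vietoris exact sequence attached to a pullback of $C^*$-algebras, and to obtain it by a direct appeal to the standard form of that result (see, e.g., \cite[\S21.5]{Blackadar:K-theory}). First I would use Corollary~\ref{cor:CK pullback} to replace $C^*((\Lambda_1 \sqcup \Lambda_2)/{\sim_\phi})$ throughout by the pullback $P := C^*(\Lambda_1) \oplus_{C^*(\Gamma)} C^*(\Lambda_2)$ via the isomorphism $\tilde\theta$, so that it suffices to produce a Mayer--Vietoris sequence for $P$ relative to the two defining maps $\pi_1, \pi_2 : C^*(\Lambda_i) \to C^*(\Gamma)$.

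The only hypothesis that the Mayer--Vietoris theorem requires beyond the pullback structure is that at least one of $\pi_1, \pi_2$ be surjective, and this is immediate: by Lemma~\ref{lem:exact sequence} (in its descent to the Cuntz--Krieger algebras used in Corollary~\ref{cor:CK pullback}) each $\pi_i$ is the composition of the quotient map $C^*(\Lambda_i) \to C^*(\Lambda_i)/\ker\pi_i$ with the isomorphism identifying this quotient with $C^*(\Gamma)$, hence is surjective. Concretely, surjectivity of $\pi_2$ makes the canonical pullback projection $\rho_1 : P \to C^*(\Lambda_1)$ surjective with $\ker\rho_1 = \{0\}\oplus\ker\pi_2 \cong \ker\pi_2$, giving a short exact sequence $0 \to \ker\pi_2 \to P \xrightarrow{\rho_1} C^*(\Lambda_1) \to 0$; the Mayer--Vietoris argument then compares the six-term sequence of this extension with that of $0 \to \ker\pi_2 \to C^*(\Lambda_2) \xrightarrow{\pi_2} C^*(\Gamma) \to 0$ to splice them into the cyclic sequence.

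With surjectivity in hand, the standard theorem yields the cyclic six-term exact sequence whose horizontal maps are $\iota_* = \big((\rho_1)_*, (\rho_2)_*\big)$ and $(\pi_1)_* - (\pi_2)_*$, where $\rho_i : P \to C^*(\Lambda_i)$ are the canonical projections, and whose vertical maps are the index and exponential maps of the extension above. To match the labels in the statement I would note that, under $\tilde\theta$, the map $\iota$ is precisely the inclusion $C^*((\Lambda_1 \sqcup \Lambda_2)/{\sim_\phi}) \to C^*(\Lambda_1) \oplus C^*(\Lambda_2)$ recorded in \eqref{eq:thetadef}, so that $\rho_i \circ \tilde\theta$ is its $i$th coordinate; transporting the sequence through $\tilde\theta_*$ therefore reproduces exactly the arrows $\iota_*$ and $(\pi_1)_* - (\pi_2)_*$ in the diagram.

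I do not expect a genuine obstacle: the mathematical content of the corollary, namely the identification of the algebra with a pullback possessing a surjective leg, has already been established in Corollary~\ref{cor:CK pullback}, and everything else is a direct invocation of Mayer--Vietoris. The only point needing minor care is bookkeeping --- checking that the general maps produced by the theorem coincide, under $\tilde\theta$, with those named in the statement, and keeping the two roles of ``$\pi_i$'' (the pullback legs $C^*(\Lambda_i) \to C^*(\Gamma)$ versus the projections $\rho_i$) clearly distinguished.
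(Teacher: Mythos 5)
Your proposal is correct and follows essentially the same route as the paper, which likewise deduces the corollary directly from Corollary~\ref{cor:CK pullback} together with the Mayer--Vietoris sequence for pullback $C^*$-algebras \cite[Theorem~21.5.1]{Blackadar:K-theory}. The only difference is that you explicitly verify the surjectivity hypothesis on the legs $\pi_i$ and the identification of the maps under $\tilde\theta$, details the paper leaves implicit.
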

\begin{proof}
This follows directly from Corollary~\ref{cor:CK pullback} and the Meyer-Vietoris exact
sequence for pullback $C^*$-algebras \cite[Theorem~21.5.1]{Blackadar:K-theory}.
\end{proof}

\section{Surfaces and the connected-sum operation}\label{sec:surfaces}

\subsection{Skeletons}

Recall from \cite{HRSW} that for $k \ge 1$, each $k$-graph $\Lambda$ is completely determined by
the $k$-coloured graph $E_\Lambda$ with vertices $\Lambda^0$ and edges $\bigsqcup_{i=1}^k
\Lambda^{e_i}$ coloured with $k$ different colours $c_1, \dots, c_k$ (that is, $\lambda$ has
colour $c_i$ if and only if $d(\lambda) = e_i$), together with the \emph{factorisation
rules} $ef=f'e'$ whenever $e,e' \in \Lambda^{e_i}$, $f,f' \in \Lambda^{e_j}$ and
$ef=f'e'$ in $\Lambda$. This $k$-coloured graph is called the \emph{skeleton} of
$\Lambda$. Conversely, any $k$-coloured graph together with a set of bijections between
$ji$-coloured paths and $ij$-coloured paths for distinct $i,j \leq k$, and satisfying the
associativity condition of \cite[\S4]{HRSW} (the condition is vacuous when $k = 2$)
determines a $k$-graph.

By convention, in a $2$-coloured graph the edges of colour $c_1$ are drawn blue (or solid)
and the edges of colour $c_2$ are drawn red (or dashed).

\subsection{The connected-sum operation}

We aim to prove the following Theorem:

\begin{thm}\label{thm:surfaces}
For each compact 2-dimensional manifold $M$, there is a $2$-graph $\Lambda$ whose topological
realisation $X_\Lambda$ is homeomorphic to $M$.
\end{thm}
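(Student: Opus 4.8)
The plan is to reduce Theorem~\ref{thm:surfaces} to the classification of compact surfaces together with a combinatorial connected-sum operation on $2$-graphs. Recall from \cite[Theorem~I.7.2]{Massey} that every compact $2$-manifold is homeomorphic to exactly one of: the sphere $S^2$, a connected sum of $n$ tori, or a connected sum of $n$ projective planes (equivalently, one may take the sphere, the $n$-holed torus, or connected sums of these with the Klein bottle or projective plane). The introduction notes that $2$-graphs whose topological realisations are homeomorphic to each of the four \emph{basic} surfaces --- the sphere, torus, projective plane, and Klein bottle --- were exhibited in \cite{KKQS}. So it suffices to (i) define a connected-sum operation $\#$ on $2$-graphs, (ii) show that $X_{\Lambda_1 \# \Lambda_2} \cong X_{\Lambda_1} \# X_{\Lambda_2}$, and then (iii) iterate.

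\textbf{Step 1: define the combinatorial connected sum.} First I would fix, inside each of the four basic $2$-graphs, a distinguished ``square'' face: a morphism $\sigma$ of degree $(1,1)$ together with its boundary, i.e.\ the copy of the $2$-graph realising a closed $2$-cell sitting inside $\Lambda$. Removing the interior of this face should correspond, at the level of topological realisations, to excising an open disc. The delicate point is that a $k$-graph cannot simply have a ``hole'' punched in it --- the factorisation property must be preserved --- so rather than deleting a cell I would arrange the gluing as an \emph{identification} along the boundary of the excised disc. Concretely, given $\Lambda_1$ and $\Lambda_2$, each with a chosen boundary circle (a sub-$2$-graph $\Gamma_i$ whose realisation is a circle bounding the chosen disc), and an isomorphism $\phi : \Gamma_1 \to \Gamma_2$ between these boundary subgraphs, I would define $\Lambda_1 \# \Lambda_2$ to be the quotient $(\Lambda_1 \sqcup \Lambda_2)/{\sim_\phi}$ as in Example~\ref{ex:disjoint_union_equiv}. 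For this to be a $2$-graph I must verify that each $\Gamma_i$ is (co-)hereditary in $\Lambda_i$; this is where the choice of the boundary circle and the orientation in the skeleton matters, and I would select the distinguished faces in the basic surfaces so that after deleting the open disc the remaining boundary is co-hereditary.

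\textbf{Step 2: identify the realisation.} Here I would invoke Proposition~\ref{prp:topological gluing}, which gives a homeomorphism $X_{(\Lambda_1 \sqcup \Lambda_2)/{\sim_\phi}} \cong X_{\Lambda_1 \sqcup \Lambda_2}/{\approx} = (X_{\Lambda_1} \sqcup X_{\Lambda_2})/{\approx}$, where $\approx$ glues $X_{\Gamma_1}$ to $X_{\Gamma_2}$ via the homeomorphism induced by $\phi$. Since each $X_{\Gamma_i}$ is a circle bounding a disc in $X_{\Lambda_i}$ whose interior is the image of the interior of the chosen face, this quotient is precisely the operation of excising two open discs and gluing the resulting manifolds-with-boundary along their boundary circles --- that is, the topological connected sum $X_{\Lambda_1} \# X_{\Lambda_2}$, provided $\phi$ reverses the induced boundary orientations (as is standard for connected sum). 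I would have to check that the gluing homeomorphism is orientation-reversing on the boundary, or at least that the resulting space is the standard connected sum and not some other identification; for non-orientable summands this is automatic, and for orientable ones the orientation of $\phi$ on the skeleton must be chosen accordingly.

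\textbf{Step 3: assemble.} With the connected sum realised, I would start from disjoint copies of the four basic $2$-graphs of \cite{KKQS} and iterate the construction. Given a target surface $M$, write $M$ via the classification as a connected sum $M_1 \# \cdots \# M_r$ of basic surfaces; then $\Lambda := \Lambda_{M_1} \# \cdots \# \Lambda_{M_r}$ (with an appropriate choice of distinguished faces and gluing isomorphisms at each stage, and using that connected sum is associative up to homeomorphism) is a $2$-graph with $X_\Lambda \cong M$ by Step~2 and induction. The sphere is handled directly by its basic realisation. \textbf{The main obstacle} I anticipate is Step~1: arranging the excision and boundary identification entirely within the combinatorics of $2$-graphs, so that $\Gamma_i$ is genuinely co-hereditary and the quotient category really satisfies the factorisation property --- in particular ensuring condition~(\ref{it:sim-lift}) of Proposition~\ref{prp:quotient k-graph}, which requires that composable pairs can be lifted across the identified boundary. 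The skeleton/factorisation-rule bookkeeping needed to exhibit a distinguished square face with a co-hereditary boundary circle in each of the four basic $2$-graphs, and to check that the gluing yields a square-preserving factorisation rule, is the technical heart of the argument.
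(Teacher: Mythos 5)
Your high-level plan (classification of surfaces, a combinatorial connected sum, Proposition~\ref{prp:topological gluing} to transfer it to realisations, then iterate) matches the paper's, but the mechanism you propose for the connected sum in your Step~1 is unworkable, and it is precisely at this point that the paper does something different. You want to glue $\Lambda_1$ and $\Lambda_2$ along the boundary circles $\Gamma_i$ of chosen square faces via Example~\ref{ex:disjoint_union_equiv}, which requires each $\Gamma_i$ to be a (co-)hereditary sub-$2$-graph. But the boundary circle of a square face is not even a subcategory: if the face is the degree-$(1,1)$ morphism $\lambda = fg = g'f'$, then the boundary contains the composable pair $(f,g)$ while omitting its composite $fg = \lambda$, so it is not closed under composition; and any subcategory containing $f$ and $g$ must contain $\lambda$, whose realisation is a $2$-cell, so its realisation is no longer a circle. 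The same observation kills the excision idea at its root: one cannot delete $\lambda$ while keeping $f$ and $g$, since composites must exist in a category (this is exactly why ``punching a hole'' is impossible, as you note), so there is no $2$-graph realising the surface minus the open face, and hence no candidate $\Lambda_i$ to glue. Finally, even if one could perform the gluing you describe on the closed-surface $2$-graphs, identifying two closed surfaces along a circle without removing the open discs does not produce the connected sum --- it produces a non-manifold. So the step you defer as ``the technical heart'' is not bookkeeping that can be filled in along the lines you sketch; it requires a different idea.

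The paper's idea avoids both obstructions. In \ref{star} the two $2$-graphs are glued only at two vertices $u_i, v_i$ chosen so that $\Lambda_i u_i = \{u_i\}$ and $v_i\Lambda_i = \{v_i\}$; since nothing leaves $u_i$ and nothing enters $v_i$, conditions (\ref{it:sim-d})--(\ref{it:sim-lift}) of Proposition~\ref{prp:quotient k-graph} hold trivially (note that this two-point identification is itself not an instance of Example~\ref{ex:disjoint_union_equiv}: the pair $\{u_i,v_i\}$ is neither hereditary nor co-hereditary). Then in \ref{squares}, instead of deleting the squares $f_1g_1 = g_1'f_1'$ and $f_2g_2 = g_2'f_2'$, one keeps the whole skeleton and \emph{swaps the factorisation rules}, declaring $f_1g_1 = g_2'f_2'$ and $f_2g_2 = g_1'f_1'$. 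Since any range- and source-preserving bijection between red-blue and blue-red paths is a valid set of factorisation rules when $k=2$ (the associativity condition of \cite{HRSW} is vacuous), this re-pairing defines a new $2$-graph $\Lambda_1 \# \Lambda_2$. Topologically the two old $2$-cells disappear and two new $2$-cells are sewn in, each with half of its boundary in each surface, which is exactly the excision of an open disc from each summand followed by gluing along the boundary circles. In other words, the paper performs the connected sum not by excising cells or gluing along a circle subgraph, but by rewiring which boundary paths bound which squares --- an operation available only because of the skeleton-plus-factorisation-rules description of $2$-graphs, and invisible from the quotient machinery of Section~\ref{sec:quotients} alone.
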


In order to prove the Theorem we develop a connected-sum operation on $2$-graphs and
apply the connected-sum to the four basic surfaces. The steps are given in
\ref{star}-\ref{sequence}

\subsubsection{} \label{star}
Let $\Lambda_1$ and $\Lambda_2$ be $2$-graphs. Suppose that $u_i, v_i \in \Lambda_i^0$
have the property that $\Lambda_i u_i  = \{u_i\}$ and $v_i \Lambda_i = \{v_i\}$ for $i =
1,2$. Let $\sim$ be the smallest equivalence relation on $\Lambda_1 \sqcup \Lambda_2$
such that $u_1 \sim u_2$ and $v_1 \sim v_2$. Since there are no morphisms $\alpha_i \in
\Lambda_i\setminus \{u_i, v_i\}$ such that $s(\alpha_i)=u_i$ or $r(\alpha_i)=v_i$, the
relation $\sim$ trivially satisfies properties (\ref{it:sim-d})--(\ref{it:sim-lift}) of
Proposition~\ref{prp:quotient k-graph}, and so we may form the quotient $2$-graph
$(\Lambda_1 \sqcup \Lambda_2)/{\sim}$.

\subsubsection{} \label{squares}
Now suppose that for $i=1,2$ there exist commuting squares $f_i g_i = g'_i f'_i$ in
$\Lambda_i$ with $f_i , f_i' \in \Lambda_i^{e_1}$ and $g_i , g_i' \in \Lambda_i^{e_2}$
such that $r(f_i) = u_i$, $s(g_i) = v_i$, and the vertices $u_i, s(f_i), s(g'_i)$ and
$v_i$ are all distinct. Let $\CC$ be the set of factorisation rules for $E_{(\Lambda_1
\cup \Lambda_2)/{\sim}}$ and specify a new set of factorisation rules $\CC'$ by replacing
$f_1 g_1=g'_1f'_1$ and $f_2g_2= g'_2f'_2$ by $f_1 g_1= g'_2f'_2$ and $f_2g_2 = g'_1f'_1$.
Since this new set of factorisation rules still specifies a range- and source-preserving
bijection between red-blue paths and blue-red paths, and since the associativity
condition of \cite{HRSW} is vacuous when $k = 2$, this is also a valid set of
factorisation rules on $E_{(\Lambda_1 \cup \Lambda_2)/{\sim}}$. Hence Theorems
4.4~and~4.5 of \cite{HRSW} imply that there is a unique $2$-graph $\Lambda_1 \#
\Lambda_2$ with skeleton $E_{(\Lambda_1 \cup \Lambda_2)/{\sim}}$ and factorisation rules
$\CC'$, called the \emph{connected-sum} of $\Lambda_1$ and $\Lambda_2$.

\subsubsection{} \label{above}
Proposition~\ref{prp:topological gluing} implies that $X_{(\Lambda_1 \sqcup
\Lambda_2)/{\sim}}$ is the surface formed by gluing the points $[u_1, 0]$ and $[v_1, 0]$
in $X_{\Lambda_1}$ to the points $[u_2, 0]$ and $[v_2, 0]$ of $X_{\Lambda_2}$. Let
$\alpha,\beta \in \big((\Lambda_1 \sqcup \Lambda_2)/{\sim}\big)^{(1,1)}$ be the elements
$\alpha = f_1g_1 = g'_1f'_1$ and $\beta = f_2g_2=g'_2f'_2$. Likewise, let $\eta = f_1g_1
= g'_2f'_2$ and $\zeta = f_2g_2 = g'_1f'_1$ in $(\Lambda_1 \# \Lambda_2)^{(1,1)}$. Then
we have
\[
X_{\Lambda_1 \# \Lambda_2}
    = X_{(\Lambda_1 \sqcup \Lambda_2)/{\sim}}
        \setminus \big(\{\alpha, \beta\} \times (0, (1,1)) \big)
        \cup \big(\{\eta,\zeta\} \times (0, (1,1))\big).
\]
The effect of this on the topological realisation is illustrated below: the squares
corresponding to $\alpha$ and $\beta$ in $(\Lambda_1 \sqcup \Lambda_2)/{\sim}$ are
illustrated on the left, and those corresponding to $\eta$ and $\zeta$ in $\Lambda_1 \#
\Lambda_2$ are illustrated on the right.
\[
\begin{tikzpicture}[scale=2]
    \foreach \b/\s/\m in {0.24/0.04/0.1, 0.48/0.08/0.2, 0.72/0.12/0.3, 0.96/0.16/0.4, 1.2/0.20/0.5,%
                          1.16/0.36/0.6, 1.12/0.52/0.7, 1.08/0.68/0.8, 1.04/0.84/0.9} {
        \draw[thick, white] (\b,\s) .. controls (\m,\m) .. (\s,\b);
        \draw[very thin, gray] (\b,\s) .. controls (\m,\m) .. (\s,\b);
    }
    \node[inner sep=1pt, circle] (v) at (1,1) {};
    \node[inner sep=1pt, circle] (x1) at (0.8,-0.2) {};
    \node[inner sep=1pt, circle] (y1) at (-0.2,0.8) {}; 
    \node[inner sep=1pt, circle] (y2) at (1.2,0.2) {};
    \node[inner sep=1pt, circle] (x2) at (0.2,1.2) {};
    \node[inner sep=1pt, circle] (u) at (0,0) {};
    \node[anchor=north east, circle, inner sep=0.5pt] at (u.south west) {\small$[u_i]$};
    \node[anchor=south west, circle, inner sep=0.5pt] at (v.north east) {\small$[v_i]$};
    \draw[blue,-latex] (v)--(y2);
    \draw[blue,-latex] (x2)--(u);
    \draw[red,dashed,-latex] (v)--(x2);
    \draw[red,dashed,-latex] (y2)--(u);
    \filldraw[white, fill=white, opacity=0.8] (1,1)--(0.8,-0.2)--(0,0)--(-0.2,0.8)--cycle;
%
    \foreach \b/\s/\m in {0.76/0.96/0.9, 0.54/0.92/0.8, 0.30/0.88/0.7, 0.06/0.84/0.6, -0.2/0.8/0.5,%
                          -0.16/0.64/0.4, -0.12/0.48/0.3, -0.08/0.32/0.2, -0.04/0.16/0.1} {
        \draw[thick, white] (\b,\s) .. controls (\m,\m) .. (\s,\b);
        \draw[very thin, gray] (\b,\s) .. controls (\m,\m) .. (\s,\b);
    }
    \draw[blue,-latex] (v)--(y1);
    \draw[blue,-latex] (x1)--(u);
    \draw[red,dashed,-latex] (v)--(x1);
    \draw[red,dashed,-latex] (y1)--(u);
    \node[inner sep=1pt, circle, fill=black] at (u) {};
    \node[inner sep=1pt, circle, fill=black] at (v) {};
    \node[inner sep=1pt, circle, fill=black] at (x1) {};
    \node[inner sep=1pt, circle, fill=black] at (x2) {};
    \node[inner sep=1pt, circle, fill=black] at (y1) {};
    \node[inner sep=1pt, circle, fill=black] at (y2) {};
\node at (1.1,0) {$g_1'$};
\node at (0.05,1.05) {$f_1$};
\node at (0.5,1.25) {$g_1$};
\node at (-0.3,0.6) {$g_2'$};
\node at (1.2,0.9) {$f_1'$};
\node at (0.6,-0.3) {$f_2$};
\draw[black,-latex] (0.8,1.25)--(0.8,0.95);
\node at (0.8,1.4) {$f_2'$};
\draw[black,-latex] (1.25,0.7)--(0.95,0.7);
\node at (1.4,0.7) {$g_2$};
\node at (-1.5,0.6) {$f_1 g_1 = g_1'f_1'$};
\node at (-1.5,0.3) {$f_2 g_2 = g_2'f_2'$};
\end{tikzpicture}
\hskip1cm
\begin{tikzpicture}[scale=2]
    \node[inner sep=1pt, circle] (v) at (1,1) {};
    \node[inner sep=1pt, circle] (x1) at (0.8,-0.2) {};
    \node[inner sep=1pt, circle] (y1) at (-0.2,0.8) {}; 
    \node[inner sep=1pt, circle] (y2) at (1.2,0.2) {};
    \node[inner sep=1pt, circle] (x2) at (0.2,1.2) {};
    \node[inner sep=1pt, circle] (u) at (0,0) {};
    \node[anchor=north east, circle, inner sep=0.5pt] at (u.south west) {\small$[u_i]$};
    \node[anchor=south west, circle, inner sep=0.5pt] at (v.north east) {\small$[v_i]$};
    \foreach \x/\xx/\y/\yy in {0.04/-0.04/0.24/0.16, 0.08/-0.08/0.48/0.32, 0.12/-0.12/0.72/0.48, 0.16/-0.16/0.96/0.64} {
        \draw[thick, white,in=25,out=245] (\x,\y) to (\xx,\yy);
        \draw[very thin, gray,in=25,out=245] (\x,\y) to (\xx,\yy);
    }
    \foreach \x/\xx/\y/\yy in {0.24/0.16/0.04/-0.04, 0.48/0.32/0.08/-0.08, 0.72/0.48/0.12/-0.12, 0.96/0.64/0.16/-0.16} {
        \draw[thick, white,in=65,out=205] (\x,\y) to (\xx,\yy);
        \draw[very thin, gray,in=65,out=205] (\x,\y) to (\xx,\yy);
    }
    \draw[red,dashed,-latex] (y2)--(u);
    \draw[blue,-latex] (x2)--(u);
    \filldraw[white, fill=white, opacity=0.7] (1,1)--(1.2,0.2)--(0.8,-0.2)--cycle;
    \filldraw[white, fill=white, opacity=0.7] (1,1)--(0.2,1.2)--(-0.2,0.8)--cycle;
    \foreach \x/\xx/\y/\yy in {0.2/-0.2/1.2/0.8, 0.36/0.04/1.16/0.84, 0.52/0.28/1.12/0.88, 0.68/0.52/1.08/0.92, 0.84/0.76/1.04/0.96} {
        \draw[thick, white,in=25,out=245] (\x,\y) to (\xx,\yy);
        \draw[very thin, gray,in=25,out=245] (\x,\y) to (\xx,\yy);
    }
    \foreach \x/\xx/\y/\yy in {1.2/0.8/0.2/-0.2, 1.16/0.84/0.36/0.04, 1.12/0.88/0.52/0.28, 1.08/0.92/0.68/0.52, 1.04/0.96/0.84/0.76} {
        \draw[thick, white,in=65,out=205] (\x,\y) to (\xx,\yy);
        \draw[very thin, gray,in=65,out=205] (\x,\y) to (\xx,\yy);
    }
    \draw[blue,-latex] (v)--(y1);
    \draw[blue,-latex] (v)--(y2);
    \draw[blue,-latex] (x1)--(u);
    \draw[red,dashed,-latex] (v)--(x1);
    \draw[red,dashed,-latex] (v)--(x2);
    \draw[red,dashed,-latex] (y1)--(u);
    \node[inner sep=1pt, circle, fill=black] at (u) {};
    \node[inner sep=1pt, circle, fill=black] at (v) {};
    \node[inner sep=1pt, circle, fill=black] at (x1) {};
    \node[inner sep=1pt, circle, fill=black] at (x2) {};
    \node[inner sep=1pt, circle, fill=black] at (y1) {};
    \node[inner sep=1pt, circle, fill=black] at (y2) {};
\node at (1.1,0) {$g_1'$};
\node at (0.05,1.05) {$f_1$};
\node at (0.5,1.25) {$g_1$};
\node at (-0.3,0.6) {$g_2'$};
\node at (1.2,0.9) {$f_1'$};
\node at (0.6,-0.3) {$f_2$};
\draw[black,-latex] (0.8,1.25)--(0.8,0.95);
\node at (0.8,1.4) {$f_2'$};
\draw[black,-latex] (1.25,0.7)--(0.95,0.7);
\node at (1.4,0.7) {$g_2$};
\node at (2.5,0.6) {$f_1 g_1 = g_2'f_2'$};
\node at (2.5,0.3) {$f_2 g_2 = g_1'f_1'$};
\end{tikzpicture}
\]
The operation described in \ref{above} deletes the interior of the square $f_1 g_1 = g_1'
f_1'$  in $\Lambda_1$ and the square $f_2 g_2 = g_2' f_2'$ in $\Lambda_2$, and inserts a
copy of a unit square bounded by $f_1, g_1, f_2'$ and $g_2'$ and another bounded by
$f_1', g_1', f_2$ and $g_2$. Since the topological realisation of a $2$-graph is obtained
by pasting a unit square into each commuting square and identifying common edges (see the
remark after the proof of Lemma 3.9 in \cite{KKQS}), we have shown that $X_{\Lambda_1 \#
\Lambda_2}$ is homeomorphic to the connected sum $X_{\Lambda_1} \# X_{\Lambda_2}$ of the
topological spaces $X_{\Lambda_1}$ and $X_{\Lambda_2}$.

\subsubsection{}
In the connected-sum $\Lambda_1\# \Lambda_2$, the vertices $u = [u_1]$ and $v = [v_1]$ of $\Lambda_1 \# \Lambda_2$
and the path $f_1g_1 = g'_2f'_2$ in $\Lambda^{(1,1)}$ have the properties required of
$u_1, v_1$ in \ref{star}. So the process we have just described can be iterated.

Examples 3.10--3.13 in Section~3.1 of \cite{KKQS} illustrate finite 2-graphs $\Lambda_S$,
$\Lambda_T$, $\Lambda_K$, $\Lambda_P$ whose topological realisations are the sphere, the
torus, the Klein bottle and the projective plane respectively. Their skeletons are
depicted below, each with the vertices labelled $u$ and $v$ satisfying the conditions
in~\ref{star} and a commuting square $\alpha = ag=ec$ as in~\ref{squares}:
\[
\begin{tikzpicture}[scale=1.5]
    \node[inner sep= 1pt] at (-1.6,0,0) {$\Lambda_S:=$};
    \node[inner sep= 1pt] (100) at (1,0,0) {$v$};
    \node[inner sep= 1pt] (-100) at (-1,0,0) {$y$};
    \node[inner sep= 1pt] (010) at (0,1,0) {$w$};
    \node[inner sep= 1pt] (0-10) at (0,-1,0) {$x$};
    \node[inner sep= 1pt] (001) at (0,0,1) {$u$};
    \node[inner sep= 1pt] (00-1) at (0,0,-1) {$z$};
    \draw[-latex, blue] (100) .. controls +(0,0.6,0) and +(0.6,0,0) .. (010) node[pos=0.5, anchor=south west] {\color{black}$a$};
    \draw[-latex, red, dashed] (100) .. controls +(0,-0.6,0) and +(0.6,0,0) .. (0-10) node[pos=0.5, anchor=north west] {\color{black}$e$};
    \draw[-latex, blue] (-100) .. controls +(0,0.6,0) and +(-0.6,0,0) .. (010) node[pos=0.5, anchor=south east] {\color{black}$b$};
    \draw[-latex, red, dashed] (-100) .. controls +(0,-0.6,0) and +(-0.6,0,0) .. (0-10) node[pos=0.5, anchor=north east] {\color{black}$f$};
    \draw[-latex, red, dashed] (010) .. controls +(0,0,0.6) and +(0,0.6,0) .. (001) node[pos=0.5, anchor=east] {\color{black}$g$};
    \draw[-latex, red, dashed] (010) .. controls +(0,0,-0.6) and +(0,0.6,0) .. (00-1) node[pos=0.8, anchor=east] {\color{black}$h$};
    \draw[-latex, blue] (0-10) .. controls +(0,0,0.6) and +(0,-0.6,0) .. (001) node[pos=0.85, anchor=west] {\color{black}$c$};
    \draw[-latex, blue] (0-10) .. controls +(0,0,-0.6) and +(0,-0.6,0) .. (00-1) node[pos=0.5, anchor=south east] {\color{black}$d$};
\end{tikzpicture}
\hspace{1.5cm}
\begin{tikzpicture}[scale=1.3]
    \node[inner sep= 1pt] at (-2.4,0,0) {$\Lambda_T:=$};
    \node[inner sep= 1pt] (002) at (0,0,2) {$u$};
    \node[inner sep= 1pt] (001) at (0,0,1.2) {$w$};
    \node[inner sep= 1pt] (00-1) at (0,0,-1.2) {$v$};
    \node[inner sep= 1pt] (00-2) at (0,0,-2) {$x$};
    \draw[-latex, red, dashed] (001) .. controls +(0,0.8,0) and +(0,0.8,0) .. (002) node[pos=0.25, anchor=west] {\color{black}$g$};
    \draw[-latex, red, dashed] (001) .. controls +(0,-0.8,0) and +(0,-0.8,0) .. (002) node[pos=0.5, anchor=north] {\color{black}$h$};
    \draw[-latex, red, dashed] (00-1) .. controls +(0,0.8,0) and +(0,0.8,0) .. (00-2) node[pos=0.5, anchor=south] {\color{black}$e$};
    \draw[-latex, red, dashed] (00-1) .. controls +(0,-0.8,0) and +(0,-0.8,0) .. (00-2) node[pos=0.25, anchor=east] {\color{black}$f$};
    \draw[-latex, blue] (00-1) .. controls +(1,0,0) and +(1,0,0) .. (001) node[pos=0.6, anchor=west] {\color{black}$a$};
    \draw[-latex, blue] (00-1) .. controls +(-1,0,0) and +(-1,0,0) .. (001) node[pos=0.4, anchor=east] {\color{black}$b$};
    \draw[-latex, blue] (00-2) .. controls +(2,0,0) and +(2,0,0) .. (002) node[pos=0.6, anchor=west] {\color{black}$c$};
    \draw[-latex, blue] (00-2) .. controls +(-2,0,0) and +(-2,0,0) .. (002) node[pos=0.4, anchor=east] {\color{black}$d$};
\end{tikzpicture}
\]
\[
\begin{tikzpicture}[scale=2.2]
\node[inner sep= 1pt] at (-0.9,0.5) {$\Lambda_K:=$};
\node[inner sep=0.5pt, circle] (u) at (0,0) {$u$};
    \node[inner sep=0.5pt, circle] (v) at (1,0) {$x$};
    \node[inner sep=0.5pt, circle] (w) at (0,1) {$w$};
    \node[inner sep=0.5pt, circle] (x) at (1,1) {$v$};
    \draw[blue,-latex,out=160, in=20] (v) to node[pos=0.5,above, black] {$c$} (u);
    \draw[blue,-latex,out=200, in=340] (v) to node[pos=0.5,below, black] {$d$} (u);
    \draw[blue,-latex,out=160, in=20] (x) to node[pos=0.5,above, black] {$a$} (w);
    \draw[blue,-latex,out=200, in=340] (x) to node[pos=0.5,below, black] {$b$} (w);
    \draw[red, dashed, -latex, out=290, in=70] (w) to node[pos=0.5,right, black] {$h$} (u);
    \draw[red, dashed, -latex, out=250, in=110] (w) to node[pos=0.5,left, black] {$g$} (u);
    \draw[red, dashed, -latex, out=290, in=70] (x) to node[pos=0.5,right, black] {$f$} (v);
    \draw[red, dashed, -latex, out=250, in=110] (x) to node[pos=0.5,left, black] {$e$} (v);
\end{tikzpicture}
\hspace{1.7cm}
\begin{tikzpicture}[scale=1.7]
\node[inner sep= 1pt] at (-1.6,0) {$\Lambda_P:=$};
\node[inner sep=.5pt, circle] (u) at (0,0) {$u$};
    \node[inner sep=.5pt, circle] (v) at (0,1) {$y$};
    \node[inner sep=.5pt, circle] (w) at (0,-1) {$w$};
    \node[inner sep=.5pt, circle] (x) at (1,0) {$x$};
    \node[inner sep=.5pt, circle] (y) at (-1,0) {$v$};
    \draw[-latex, blue] (v) .. controls +(-0.2,-0.5) .. (u) node[pos=0.5, left, black] {$c$};
    \draw[-latex, blue] (v) .. controls +(0.2,-0.5) .. (u) node[pos=0.5, right, black] {$d$};
    \draw[-latex, red, dashed] (w) .. controls +(-0.2,0.5) .. (u) node[pos=0.5, left, black] {$g$};
    \draw[-latex, red, dashed] (w) .. controls +(0.2,0.5) .. (u) node[pos=0.5, right, black] {$h$};
    \draw[-latex, blue] (x)--(w) node[pos=0.5, anchor=north west, black] {$b$};
    \draw[-latex, red, dashed] (x)--(v) node[pos=0.5, anchor=south west, black] {$f$};
    \draw[-latex, blue] (y)--(w) node[pos=0.5, anchor=north east, black] {$a$};
    \draw[-latex, red, dashed] (y)--(v) node[pos=0.5, anchor=south east, black] {$e$};
\end{tikzpicture}
\]

\subsubsection{}\label{sequence} Fix a sequence $\Lambda_i$ of $2$-graphs, each of which is a copy of one of $\Lambda_S$, $\Lambda_T$, $\Lambda_K$ or $\Lambda_P$. Let $\Gamma_1 = \Lambda_1$ and inductively construct $\Gamma_i
= \Gamma_{i-1} \# \Lambda_i$ where the connected-sum construction is applied to $\alpha_b
\in \Gamma_{i-1}$ and $\alpha_a \in \Lambda_i$. Then $\bigcup^\infty_{i=1}
\bigcap^\infty_{j=i} \Gamma_i$ is a $2$-graph and its topological realisation is the
connected-sum of the $\Lambda_i$. So we can form any countable connected-sum of these
four surfaces as the topological realisation of a $2$-graph.

\begin{proof}[Proof of Theorem~\ref{thm:surfaces}]
The classification of compact 2-dimensional manifolds (see for example
\cite[Theorem~I.7.2]{Massey}) says that any such object is a sphere, a connected sum of
$n$ $2$-tori, or the connected sum of the latter with either the Klein bottle or the
projective plane. Fix such a decomposition, and then apply the proceedure
\ref{star}--\ref{sequence} to the corresponding collection of $2$-graphs. The topological
realisation of the resulting $2$-graph is then the desired surface.
\end{proof}

\begin{ex}
Let $\Lambda_i = \Lambda_{T}$, for $i=1,2$  where $\Lambda_T$ is the $2$-graph whose
topological realisation is the $1$-holed torus. The skeletons of the $\Lambda_i$ both have the
form of the skeleton on the left
\[
\begin{tikzpicture}[scale=1.5]
\begin{scope}[xshift=0cm]
    \node[inner sep=0.5pt, circle] (x) at (0,2) {$v_i$};
    \node[inner sep=0.5pt, circle] (v) at (1,1) {$x_i$};
    \node[inner sep=0.5pt, circle] (w) at (-1,1) {$w_i$};
    \node[inner sep=0.5pt, circle] (u) at (0,0) {$u_i$};
    \draw[-latex, blue, out=205, in=65] (x) to node[black, pos=0.5,circle,inner sep=0.5pt,fill=white] {\small$a_i$} (w);
    \draw[-latex, blue, out=245, in=25] (x) to node[black, pos=0.5,circle,inner sep=0.5pt,fill=white] {\small$b_i$} (w);
    \draw[-latex, red, dashed, out=335, in=115] (x) to node[black, pos=0.5,circle,inner sep=0.5pt,fill=white] {\small$e_i$} (v);
    \draw[-latex, red, dashed, out=295, in=155] (x) to node[black, pos=0.5,circle,inner sep=0.5pt,fill=white] {\small$f_i$} (v);
    \draw[-latex, blue, out=205, in=65] (v) to node[black, pos=0.5,circle,inner sep=0.5pt,fill=white] {\small$c_i$} (u);
    \draw[-latex, blue, out=245, in=25] (v) to node[black, pos=0.5,circle,inner sep=0.5pt,fill=white] {\small$d_i$} (u);
    \draw[-latex, red, dashed, out=335, in=115] (w) to node[black, pos=0.5,circle,inner sep=0.5pt,fill=white] {\small$g_i$} (u);
    \draw[-latex, red, dashed, out=295, in=155] (w) to node[black, pos=0.5,circle,inner sep=0.5pt,fill=white] {\small$h_i$} (u);
\end{scope}
\begin{scope}[xshift=2.5cm]
\draw[->,blue] (0.15,0) -- (0.85,0);
\draw[<-,blue] (1.15,0) -- (1.85,0);
\draw[->,blue] (0.15,1) -- (0.85,1);
\draw[<-,blue] (1.15,1) -- (1.85,1);
\draw[->,blue] (0.15,2) -- (0.85,2);
\draw[<-,blue] (1.15,2) -- (1.85,2);
\draw[->,red, dashed] (0,0.15) -- (0,0.85);
\draw[->,red, dashed] (1,0.15) -- (1,0.85);
\draw[->,red, dashed] (2,0.15) -- (2,0.85);
\draw[<-,red, dashed] (0,1.15) -- (0,1.85);
\draw[<-,red, dashed] (1,1.15) -- (1,1.85);
\draw[<-,red, dashed] (2,1.15) -- (2,1.85);
\node at (0,0) {$v_i$};
\node at (0,1) {$x_i$};
\node at (0,2) {$v_i$};
\node at (1,0) {$w_i$};
\node at (1,1) {$u_i$};
\node at (1,2) {$w_i$};
\node at (2,0) {$v_i$};
\node at (2,1) {$x_i$};
\node at (2,2) {$v_i$};
\node at (0.5,0.15) {$b_i$};
\node at (1.5,0.15) {$a_i$};
\node at (0.5,1.15) {$c_i$};
\node at (1.5,1.15) {$d_i$};
\node at (0.5,2.15) {$b_i$};
\node at (1.5,2.15) {$a_i$};
\node at (0.15,0.5) {$f_i$};
\node at (0.15,1.5) {$e_i$};
\node at (1.15,0.5) {$h_i$};
\node at (1.15,1.5) {$g_i$};
\node at (2.15,0.5) {$f_i$};
\node at (2.15,1.5) {$e_i$};
\end{scope}
\end{tikzpicture}
\]
with factorisation rules given by the square commuting diagrams on the right. To apply
our construction we set $u=[u_i]$ and $v=[v_i]$ with the
distinguished squares $d_1 f_1 = h_1 a_1$ of $\Lambda_1$ and $c_2 e_2 = g_2 b_2$ of
$\Lambda_{2}$. Then the skeleton of the connected sum $\Lambda_1 \# \Lambda_2$ has the
form of the diagram on the left with the same factorisation rules as above except that
$d_1f_1 = g_2b_2$ and $h_1a_1 = c_2e_2$.
\[
\begin{tikzpicture}[scale=1.5]
    \node[inner sep=0.5pt, circle] (x) at (0,2) {$v$};
    \node[inner sep=0.5pt, circle] (w1) at (-1.8,0) {$w_1$};
    \node[inner sep=0.5pt, circle] (v1) at (-0.6,0) {$x_1$};
    \node[inner sep=0.5pt, circle] (w2) at (0.6,0) {$w_2$};
    \node[inner sep=0.5pt, circle] (v2) at (1.8,0) {$x_2$};
    \node[inner sep=0.5pt, circle] (u) at (0,-2) {$u$};
    \draw[-latex, blue, out=190, in=80] (x) to node[black, pos=0.5,circle,inner sep=0.5pt,fill=white] {\small$a_1$} (w1);
    \draw[-latex, blue, out=210, in=60] (x) to node[black, pos=0.5,circle,inner sep=0.5pt,fill=white] {\small$b_1$} (w1);
    \draw[-latex, blue, out=280, in=120] (x) to node[black, pos=0.5,circle,inner sep=0.5pt,fill=white] {\small$a_2$} (w2);
    \draw[-latex, blue, out=300, in=90] (x) to node[black, pos=0.5,circle,inner sep=0.5pt,fill=white] {\small$b_2$} (w2);
    \draw[-latex, red, dashed, out=240, in=90] (x) to node[black, pos=0.5,circle,inner sep=0.5pt,fill=white] {\small$e_1$} (v1);
    \draw[-latex, red, dashed, out=260, in=60] (x) to node[black, pos=0.5,circle,inner sep=0.5pt,fill=white] {\small$f_1$} (v1);
    \draw[-latex, red, dashed, out=330, in=120] (x) to node[black, pos=0.5,circle,inner sep=0.5pt,fill=white] {\small$e_2$} (v2);
    \draw[-latex, red, dashed, out=350, in=100] (x) to node[black, pos=0.5,circle,inner sep=0.5pt,fill=white] {\small$f_2$} (v2);
    \draw[-latex, blue, in=10, out=260] (v2) to node[black, pos=0.5,circle,inner sep=0.5pt,fill=white] {\small$c_2$} (u);
    \draw[-latex, blue, in=30, out=240] (v2) to node[black, pos=0.5,circle,inner sep=0.5pt,fill=white] {\small$d_2$} (u);
    \draw[-latex, blue, in=100, out=300] (v1) to node[black, pos=0.5,circle,inner sep=0.5pt,fill=white] {\small$c_1$} (u);
    \draw[-latex, blue, in=120, out=270] (v1) to node[black, pos=0.5,circle,inner sep=0.5pt,fill=white] {\small$d_1$} (u);
    \draw[-latex, red, dashed, in=60, out=270] (w2) to node[black, pos=0.5,circle,inner sep=0.5pt,fill=white] {\small$h_2$} (u);
    \draw[-latex, red, dashed, in=80, out=240] (w2) to node[black, pos=0.5,circle,inner sep=0.5pt,fill=white] {\small$g_2$} (u);
    \draw[-latex, red, dashed, in=170, out=280] (w1) to node[black, pos=0.5,circle,inner sep=0.5pt,fill=white] {\small$g_1$} (u);
    \draw[-latex, red, dashed, in=150, out=300] (w1) to node[black, pos=0.5,circle,inner sep=0.5pt,fill=white] {\small$h_1$} (u);
\end{tikzpicture} \qquad
\begin{tikzpicture}[scale=1.5]
    \node[inner sep=0.5pt, circle] (u) at (0,0) {$u$};
    \foreach \n/\T in {1/22.5, 2/67.5, 3/112.5, 4/157.5, 5/202.5, 6/247.5, 7/292.5, 8/337.5} {
        \node[inner sep=0.5pt, circle] (x\n) at (\T:2) {\small$v$};
    }
    \foreach \n/\nn in {1/2, 2/3, 3/4, 4/5, 5/6, 6/7, 7/8, 8/1} {
        \draw[white] (x\n)--(x\nn) node[pos=0.5, inner sep=0.5pt,circle] (z\n) {\phantom{\small$w_0$}};
    }
    \node at (z1) {\small$w_1$};
    \node at (z2) {\small$x_1$};
    \node at (z3) {\small$w_1$};
    \node at (z4) {\small$x_1$};
    \node at (z5) {\small$w_2$};
    \node at (z6) {\small$x_2$};
    \node at (z7) {\small$w_2$};
    \node at (z8) {\small$x_2$};
    \foreach \n/\nn in {2/1, 4/3, 6/5, 8/7} {
        \draw[-latex, blue] (x\n)--(z\nn) node[pos=0.5, inner sep=0] (z\nn x\n) {};
        \draw[-latex, blue] (x\nn)--(z\nn) node[pos=0.5, inner sep=0] (z\nn x\nn) {};
        \draw[-latex, red, dashed] (z\nn)--(u) node[pos=0.5, inner sep=0] (red\nn) {};
    }
    \foreach \n/\nn in {1/8, 3/2, 5/4, 7/6} {
        \draw[-latex, red, dashed] (x\n)--(z\nn) node[pos=0.5, inner sep=0] (z\nn x\n) {};
        \draw[-latex, red, dashed] (x\nn)--(z\nn) node[pos=0.5, inner sep=0] (z\nn x\nn) {};
        \draw[-latex, blue] (z\nn)--(u) node[pos=0.5, inner sep=0] (blue\nn) {};
    }
    \node[anchor=south east, circle, inner sep=0.5pt] at (red1) {\small$h_1$};
    \node[anchor=north east, circle, inner sep=0.5pt] at (red3) {\small$g_1$};
    \node[anchor=north west, circle, inner sep=0.5pt] at (red5) {\small$g_2$};
    \node[anchor=south west, circle, inner sep=0.5pt] at (red7) {\small$h_2$};
    \node[anchor=east, circle, inner sep=0.5pt] at (blue2) {\small$c_1$};
    \node[anchor=north, circle, inner sep=0.5pt] at (blue4) {\small$d_1$};
    \node[anchor=west, circle, inner sep=0.5pt] at (blue6) {\small$d_2$};
    \node[anchor=south, circle, inner sep=0.5pt] at (blue8) {\small$c_2$};
    \node[anchor=east, circle, inner sep=0.5pt] at (z4x4) {\small$e_1$};
    \node[anchor=east, circle, inner sep=0.5pt] at (z4x5) {\small$f_1$};
    \node[anchor=north east, circle, inner sep=0.5pt] at (z5x5) {\small$b_2$};
    \node[anchor=north east, circle, inner sep=0.5pt] at (z5x6) {\small$a_2$};
    \node[anchor=north, circle, inner sep=0.5pt] at (z6x6) {\small$e_2$};
    \node[anchor=north, circle, inner sep=0.5pt] at (z6x7) {\small$f_2$};
    \node[anchor=north west, circle, inner sep=0.5pt] at (z7x7) {\small$a_2$};
    \node[anchor=north west, circle, inner sep=0.5pt] at (z7x8) {\small$b_2$};
    \node[anchor=west, circle, inner sep=0.5pt] at (z8x8) {\small$f_2$};
    \node[anchor=west, circle, inner sep=0.5pt] at (z8x1) {\small$e_2$};
    \node[anchor=south west, circle, inner sep=0.5pt] at (z1x1) {\small$a_1$};
    \node[anchor=south west, circle, inner sep=0.5pt] at (z1x2) {\small$b_1$};
    \node[anchor=south, circle, inner sep=0.5pt] at (z2x2) {\small$f_1$};
    \node[anchor=south, circle, inner sep=0.5pt] at (z2x3) {\small$e_1$};
    \node[anchor=south east, circle, inner sep=0.5pt] at (z3x3) {\small$b_1$};
    \node[anchor=south east, circle, inner sep=0.5pt] at (z3x4) {\small$a_1$};
\end{tikzpicture}
\]
Organising this skeleton into the commuting diagram on the right (note that this is not
the skeleton of the 2-graph, because some edges of the 2-graph appear more than once in
the diagram), we recognise the standard octohedral planar diagram for a two-holed
2-torus.
\end{ex}

\section{Simplices and $k$-spheres from $k$-graphs}\label{sec:spheres}

In this section we show how to realise a $k$-simplex as the topological
realisation of a $k$-graph, $\Sigma_k$. Combining this with the results of
Section~\ref{sec:quotients} we prove the following result.

\begin{thm}\label{thm:spheres realised}
For each $k  \ge 0$ there is a finite $k$-graph $\Gamma$ whose topological realisation is
homeomorphic to a $k$-sphere.
\end{thm}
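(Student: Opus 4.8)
The plan is to realise the $k$-sphere as the topological realisation of a $k$-graph obtained by gluing two copies of the $k$-simplex graph $\Sigma_k$ along their boundaries. The geometric picture to emulate is the standard decomposition $S^k = D^k \cup_{\partial} D^k$, expressing the $k$-sphere as two $k$-dimensional discs glued along their common boundary $(k-1)$-sphere. Since a $k$-simplex is homeomorphic to a closed $k$-disc, and its boundary is homeomorphic to a $(k-1)$-sphere, it suffices to glue two copies of $\Sigma_k$ along their boundary faces in a way that is compatible with the $k$-graph structure.

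First I would take two copies $\Sigma_k^{(1)}$ and $\Sigma_k^{(2)}$ of the $k$-graph $\Sigma_k$ constructed earlier in the section, whose topological realisation is a $k$-simplex by Theorem~\ref{thm:homeomorphism}. Within each copy I would identify the sub-$k$-graph $\Gamma$ corresponding to the boundary $\partial\Sigma_k$ of the simplex --- that is, the subgraph built from the $(k-1)$-faces and their sub-faces. The key structural requirement is that $\Gamma$ embed in each $\Sigma_k^{(i)}$ as a \emph{co-hereditary} (equivalently, applying the opposite-category trick noted in the background section, hereditary) subgraph, so that the identification map $\phi$ gluing the two boundary copies falls under the framework of Example~\ref{ex:disjoint_union_equiv}. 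I would then invoke Proposition~\ref{prp:quotient k-graph} to form the quotient $k$-graph $\Gamma := (\Sigma_k^{(1)} \sqcup \Sigma_k^{(2)})/{\sim_\phi}$, and apply Proposition~\ref{prp:topological gluing} to conclude that
\[
X_{\Gamma} \cong X_{\Sigma_k^{(1)}} \sqcup_{X_{\partial\Sigma_k}} X_{\Sigma_k^{(2)}},
\]
the pushout gluing two $k$-simplices along their boundary $(k-1)$-spheres, which is homeomorphic to $S^k$.

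The main obstacle, flagged already in the introduction, is that the combinatorics of $k$-graphs severely constrain which face identifications produce a genuine $k$-graph: one must check that the boundary subgraph $\Gamma$ really is co-hereditary in $\Sigma_k$ and that the gluing isomorphism $\phi$ respects the factorisation rules, so that conditions~(\ref{it:sim-d})--(\ref{it:sim-lift}) of Proposition~\ref{prp:quotient k-graph} hold. This will require a careful analysis of how the placing-function indexing of the vertices of $\Sigma_k$ restricts to the boundary and how paths in $\Sigma_k$ terminate on boundary faces; I expect the hereditary/co-hereditary verification, rather than the topology, to be the delicate step. Once the quotient is known to be a $k$-graph, the homeomorphism $X_\Gamma \cong S^k$ follows formally from Theorem~\ref{thm:homeomorphism} together with the pushout description of the sphere. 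Finally, for the boundary case $k=0$, one checks directly that a suitable $0$-graph (two points, with the convention $X_\Lambda \cong \Lambda^0$) realises the $0$-sphere.
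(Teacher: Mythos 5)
Your construction is the same as the paper's. The paper forms $\Lambda = \{0,1\}\times\Sigma_k$ (canonically isomorphic to $\Sigma_k\sqcup\Sigma_k$) and divides by the relation identifying $(0,(f,g))$ with $(1,(f,g))$ exactly when $f\neq 0$; this is precisely your gluing of two copies of $\Sigma_k$ along the boundary subgraph, which in terms of placing functions is $B=\{(f,g)\in\Sigma_k : f\neq 0\}$ --- a set you never pin down explicitly. Two remarks on the combinatorial side. First, under the paper's conventions ($r(f,g)=(f,f)$, $s(f,g)=(g,g)$) the subgraph $B$ is \emph{hereditary}, not co-hereditary (either suffices for Example~\ref{ex:disjoint_union_equiv}, but they are not interchangeable properties of a fixed subgraph). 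Second, the verification you flag as ``the delicate step'' is in fact a one-line observation: since $f\le g$ pointwise, $f\neq 0$ forces $g\neq 0$, so $B=B^0\Sigma_k$ is closed under sources, and conditions (\ref{it:sim-d})--(\ref{it:sim-lift}) of Proposition~\ref{prp:quotient k-graph} follow at once.

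The genuine gap is your claim that, once the quotient is a $k$-graph, the homeomorphism $X_\Gamma\cong S^k$ ``follows formally from Theorem~\ref{thm:homeomorphism} together with the pushout description of the sphere.'' It does not. Theorem~\ref{thm:homeomorphism} provides a homeomorphism $i:X_{\Sigma_k}\to\operatorname{conv}\{\ep_0,\dots,\ep_k\}$, but says nothing about \emph{which subspace} of $X_{\Sigma_k}$ is being glued: Proposition~\ref{prp:topological gluing} only tells you that $X_\Gamma$ is two copies of the simplex identified along $i(\{[(f,g),t]:f\neq 0\})$, and gluing two $k$-discs along an unidentified closed subspace need not produce a sphere. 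The essential step --- and the bulk of the paper's proof --- is to show that
$i(\{[(f,g),t]:f\neq 0\}) = \bigcup_{i=0}^{k}\conv\{\ep_j : j\neq i\}$,
the topological boundary of the simplex. This requires the explicit formula~\eqref{eq:phitform}: one direction uses that $f\neq 0$ forces $\|t\|_\infty=1$, so the image point is a convex combination of the $v_{f,n}$ and lies in a proper face; the converse direction exhibits every point of every face as $i([(f_{\sigma,j},\sigma),t])$ with $\sigma\in P^{\max}_k$ and $f_{\sigma,j}$ the unique placing with $f_{\sigma,j}\le\sigma$ and $h(f_{\sigma,j})=e_j$, using Lemmas \ref{lem:technical 1}--\ref{lem:phif-inj}. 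So your expectation about where the difficulty lies is inverted: the hereditary check is trivial, the topological identification of the boundary is the real content, and without it the argument is incomplete.
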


Recall our convention that $\mathbf{1}_0 = 0$, the unique element of $\N^0$. We begin with
the construction of $\Sigma_k$.

\begin{dfn}
A function $f : \{0, \dots, k\} \to \{0, \dots, k\}$ is a
\emph{$k$-placing}, or just a \emph{placing} if
\begin{equation}\label{eq:placing}
f(j) = \big|\{i : f(i) < f(j)\}\big|\quad\text{ for all $j \le k$}.
\end{equation}
We write $P_k$ for set of $k$-placings, and for $f,g \in P_k$, we write $f \le g$ if
$f(i) \le g(i)$ for all $i$.
\end{dfn}
Every permutation of $\{0, \dots, k\}$ is a placing, and these are precisely
the maximal placings with respect to $\le$. We write $P^{\max}_k$ for set of maximal
placings. The zero function $0 : i \mapsto 0$ is the unique minimum $k$-placing.

We will construct a $k$-graph whose vertices are the $k$-placings. (An acknowledgement is
in order: we found this parameterisation using The On-Line Encyclopedia of Integer
Sequences \cite{oeis}---we constructed the first three examples by hand, and then used
the OEIS to seach for the sequence of the numbers of vertices appearing in these graphs.)
To define the degree map on this $k$-graph, we first introduce the following height
function on placings.

\begin{dfn}
The \emph{height function} $h : P_k \to \N^k$ is given by
\[
h(f)_i = \begin{cases}
    1 &\text{ if $f^{-1}(i) \not= \emptyset$}\\
    0 &\text{ otherwise}
\end{cases}
\qquad\text{ for $1 \le i \le k$,}
\]
and satisfies $h(f) \leq \mathbf{1}_k$ for all $f \in P_k$.
\end{dfn}

Note that $h(0) = 0$, and if $\sigma \in P^{\max}_k$, then $h(\sigma) = \mathbf{1}_k$. We
have $h(f) = e_i$ if and only if $\range(f) = \{0,i\}$, which in turn is equivalent to
$|f^{-1}(0)| = i$ and $|f^{-1}(i)| = k+1-i$.

\begin{prop}\label{prp:k-graph}
Let $\Sigma_k = \{(f,g) \in P_k \times P_k : f \le g\}$.
Define $r(f,g) = (f,f)$, $s(f,g) = (g,g)$, $(f,g)(g,h) = (f,h)$, and $d(f,g) = h(g) -
h(f)$. With respect to these structure maps, $(\Sigma_k, d)$ is a $k$-graph, and $d$ maps
$\Sigma_k$ onto $\{n \in \N^k : n \le \mathbf{1}_k \}$.
\end{prop}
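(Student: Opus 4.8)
The plan is to verify in turn that $\Sigma_k$ is a category, that $d$ is a well-defined functor into $\N^k$, and that $d$ satisfies the factorisation property; surjectivity of $d$ will fall out of the construction. That $\Sigma_k$ is a category is routine: two morphisms $(f,g)$ and $(f',g')$ are composable precisely when $g = f'$, the product $(f,g)(g,h) = (f,h)$ makes sense because $f \le g \le h$ forces $f \le h$, associativity is immediate from transitivity of $\le$, and $(f,f)$ is the identity at $f$. Likewise $d$ is additive along composites, since $d(f,g) + d(g,h) = (h(g) - h(f)) + (h(h) - h(g)) = h(h) - h(f) = d(f,h)$, and $d(f,f) = 0$. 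The one genuinely nontrivial point in establishing that $d$ is a functor is that $d(f,g) = h(g) - h(f)$ actually lies in $\N^k$, i.e.\ that $f \le g$ implies $h(f) \le h(g)$.

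To prove this I would introduce the counting function $c_g(v) := |\{i : g(i) < v\}|$ for a placing $g$ and $0 \le v \le k+1$. The defining relation \eqref{eq:placing} says exactly that $g(j) = c_g(g(j))$ for all $j$, so $v \in \range(g)$ forces $c_g(v) = v$. The key combinatorial fact I would establish is that $c_g(v) \ge v$ for every $v$, with equality exactly when $v \in \range(g)$ (or $v = k+1$): writing $R = \range(g) = \{0 = \rho_0 < \cdots < \rho_q\}$, one checks from \eqref{eq:placing} that consecutive range values satisfy $\rho_{s+1} = \rho_s + |g^{-1}(\rho_s)|$, so for $v \notin R$ the count $c_g(v)$ equals the least range value exceeding $v$ (or $k+1$ if there is none), which is strictly greater than $v$. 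Consequently $v \in \range(g) \iff c_g(v) = v$. Now if $f \le g$ then $\{i : g(i) < v\} \subseteq \{i : f(i) < v\}$, so $c_g(v) \le c_f(v)$; taking $v \in \range(f)$ gives $c_g(v) \le c_f(v) = v \le c_g(v)$, whence $c_g(v) = v$ and $v \in \range(g)$. Thus $\range(f) \subseteq \range(g)$ and $h(f) \le h(g)$, as required.

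The main work, and the step I expect to be the obstacle, is the factorisation property. Given $(f,h) \in \Sigma_k$ with $d(f,h) = m + n$, note that $m, n \in \{0,1\}^k$ have disjoint supports with $\supp(m) \sqcup \supp(n) = \range(h) \setminus \range(f)$. I would produce the factoring vertex $g$ explicitly by setting $R := \range(f) \cup \supp(m)$ and defining $g(i) := \max\{\rho \in R : \rho \le h(i)\}$. A short check using the displayed recursion for range values shows $g$ is a placing with $\range(g) = R$, hence $h(g) = h(f) + m$ and $d(f,g) = m$, $d(g,h) = n$; moreover $g \le h$ is built in, and $f \le g$ holds because $f(i) \in \range(f) \subseteq R$ together with $f(i) \le h(i)$ force $f(i) \le g(i)$. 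For uniqueness, any competing factorisation yields a placing $g'$ with $\range(g') = R$ and $f \le g' \le h$; then $g'(i) \in R$ and $g'(i) \le h(i)$ give $g' \le g$ pointwise, while $\range(g') = \range(g) = R$ forces $\sum_i g'(i) = \sum_i g(i)$, so $g' = g$. Finally, surjectivity of $d$ onto $\{n \in \N^k : n \le \mathbf{1}_k\}$ follows by taking $f$ to be the zero placing and observing that any subset of $\{0,\dots,k\}$ containing $0$ arises as $\range(g)$ for some placing (choose the fibre sizes to be the successive gaps), so that $h(g) = n$ can be arranged for each $n \le \mathbf{1}_k$. I expect the bookkeeping confirming that the explicit $g$ is a placing with range exactly $R$ to be the crux, with everything else formal.
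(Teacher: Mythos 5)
Your proposal is correct, and its core construction --- the factoring vertex $g(i) = \max\{\rho \in R : \rho \le h(i)\}$ with $R = \range(f) \cup \supp(m)$ --- is exactly the construction in the paper's Lemma~\ref{lem:tailfactor} (there written $g(i) = \max\{j \le f(i) : z_j = 1\}$, applied in the paper's proof with $z = h(f') + n$). The surrounding apparatus differs, though, and mostly to your credit. First, you actually prove that $f \le g$ implies $\range(f) \subseteq \range(g)$, hence $h(f) \le h(g)$, via the counting function $c_g(v) = |\{i : g(i) < v\}|$ and the inequality $c_g(v) \ge v$ with equality exactly on $\range(g) \cup \{k+1\}$; the paper needs this for $d$ to take values in $\N^k$ but dismisses it with ``it is clear that $d$ is a functor'', so you fill a genuine (if small) gap. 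Second, your uniqueness argument is different: the paper argues by contradiction, comparing preimages $f^{-1}(\{0,\dots,n-1\})$ and $(g')^{-1}(\{0,\dots,n-1\})$, whereas you note that any competitor $g'$ has range $R$, is pointwise dominated by $g$, and has the same value-sum as $g$ because the fibre sizes of a placing are determined by its range via $\rho_{s+1} = \rho_s + |g^{-1}(\rho_s)|$; this is cleaner. Third, for surjectivity you build a placing with prescribed range directly, while the paper uses maximal placings plus the factorisation property; both work. The one step you leave as a sketch --- that your $g$ is a placing with $\range(g) = R$ --- does go through in a few lines with your tools: $R \subseteq \range(h)$ (using your monotonicity result), and for $v = g(i) \in R$ one has $\{j : g(j) < v\} = \{j : h(j) < v\}$ (if $h(j) \ge v$ then $g(j) \ge v$ since $v \in R$), whence $c_g(v) = c_h(v) = v$; this set-identity is precisely the heart of the paper's proof of Lemma~\ref{lem:tailfactor}. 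The only thing the paper's packaging buys that yours does not is the final assertion of that lemma (equality of low preimages), which is reused later in Section~\ref{sec:spheres}; as a proof of Proposition~\ref{prp:k-graph} alone, your route is complete and in places tighter.
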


To prove the proposition, we set aside a technical lemma.

\begin{lem}\label{lem:tailfactor}
Suppose $f \in P_k$ and $h(f) \ge z \in \N^k$. Define $g : \{0, \dots, k\} \to \{0,
\dots, k\}$ by
\[
g(i) = \max\{ j \le f(i) : z_j = 1\}.
\]
Then $g \in P_k$, $g \le f$ and $h(g) = z$. Moreover, $g$ is the unique such element of
$P_k$,  
and for $n \in \range(g)$ we have $g^{-1}(\{1, \dots, n-1\}) = f^{-1}(\{1, \dots,
n-1\})$.
\end{lem}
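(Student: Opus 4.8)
The plan is to recognise $g$ as a composite. Setting $\rho(v):=\max\{j\le v:z_j=1\}$, where I adopt the convention $z_0=1$ so that the maximum is always taken over a nonempty set, we have $g=\rho\circ f$. The map $\rho\colon\{0,\dots,k\}\to\{0,\dots,k\}$ is non-decreasing, satisfies $\rho(v)\le v$, and fixes a value $v$ precisely when $z_v=1$. Since $\rho\le\id$ and $\rho$ is monotone, the inequality $g\le f$ is immediate, and every other assertion will be read off from properties of $\rho$ together with the placing identity for $f$.

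First I would compute $h(g)$ by identifying $\range(g)\cap\{1,\dots,k\}=\{j:z_j=1\}$. For $\supseteq$, the hypothesis $z\le h(f)$ shows that any $j$ with $z_j=1$ lies in $\range(f)$, say $f(i)=j$, and then $g(i)=\rho(j)=j$. For $\subseteq$, any nonzero value $g(i)=\rho(f(i))$ is by construction an index with $z$-coordinate $1$. Together these give $h(g)=z$.

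The heart of the argument is verifying $g\in P_k$, i.e.\ $g(j)=|\{i:g(i)<g(j)\}|$ for all $j$. Writing $w:=g(j)=\rho(f(j))$, the key observation is that $z_w=1$ (a value of $\rho$ is always a retained index), whence $\rho(f(i))<w\iff f(i)<w$ for every $i$, so that $\{i:g(i)<g(j)\}=\{i:f(i)<w\}$. When $w\ge 1$, the hypothesis $z\le h(f)$ forces $w\in\range(f)$, and choosing $i_0$ with $f(i_0)=w$ lets me invoke the placing identity for $f$ at $i_0$ to get $|\{i:f(i)<w\}|=f(i_0)=w=g(j)$; the case $w=0$ is trivial. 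This reduction of the count for $g$ to the placing identity for $f$ at a preimage of $w$ is where I expect the main subtlety to lie, since it is exactly where $z\le h(f)$ is used.

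For uniqueness, suppose $g'\in P_k$ with $g'\le f$ and $h(g')=z$. Then $\range(g')=\{0\}\cup\{j:z_j=1\}=\range(g)$, and since each value $g'(i)$ lies in this set and is $\le f(i)$, it is bounded by the largest such value $\rho(f(i))=g(i)$; hence $g'\le g$ pointwise. Because $g$ and $g'$ are placings with the same range, they have identical fibre sizes (for a placing the fibre sizes are determined by the range, directly from the placing identity), so $\sum_i g'(i)=\sum_i g(i)$; combined with $g'\le g$ this forces $g'=g$. Finally, for the last clause I would establish the threshold equivalence $g(i)\ge n\iff f(i)\ge n$ for each $n\in\range(g)$ and each $i$: since $n\in\range(g)$ forces $z_n=1$, the index $n$ is itself admissible in the maximum defining $g(i)=\rho(f(i))$, so $\rho(f(i))\ge n$ holds exactly when $f(i)\ge n$. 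Passing to complements shows that $f$ and $g$ have identical preimages of the initial segment of values below $n$, which is the asserted identity.
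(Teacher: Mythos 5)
Your proof is correct, and on the three core assertions ($g \le f$, $h(g)=z$, $g \in P_k$) it runs parallel to the paper's: the equivalence you derive from $z_{g(j)}=1$, namely $\{i : g(i) < g(j)\} = \{i : f(i) < g(j)\}$, is exactly the paper's key claim, and both proofs then verify the placing identity for $g$ by evaluating the placing identity for $f$ at an $f$-preimage of $g(j)$, whose existence is precisely where $z \le h(f)$ enters. Where you genuinely depart from the paper is uniqueness. The paper argues by minimal counterexample: if $f^{-1}(\{0,\dots,n-1\}) \subsetneq (g')^{-1}(\{0,\dots,n-1\})$ for some $n \in \range(g')$, taking the least such $n$ and comparing the placing identities at preimages of $n$ yields $n<n$; this establishes the preimage identity \eqref{eq:preimages}, from which the paper then extracts both $g'=g$ and the final clause. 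Your route --- $\range(g')=\range(g)$, hence $g' \le g$ pointwise by maximality of $\rho(f(i))$, hence $g'=g$ because placings with equal range have equal fibre sizes and therefore equal coordinate sums --- is a genuinely different and cleaner closing argument; the counting fact it rests on (for a placing $p$ with range $r_0 < \dots < r_m$ one has $|p^{-1}(r_s)| = r_{s+1}-r_s$ for $s<m$ and $|p^{-1}(r_m)| = k+1-r_m$, directly from the placing identity) is correct and worth making explicit. Your derivation of the last clause from the threshold equivalence $g(i)\ge n \iff f(i)\ge n$ is likewise independent of the uniqueness argument, whereas the paper obtains it as a byproduct of \eqref{eq:preimages}; both are valid. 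Your explicit convention $z_0=1$ is also needed (implicitly) by the paper, since otherwise $g(i)$ is a maximum over a possibly empty set.

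One caveat, which is not a gap on your side: what you prove is $g^{-1}(\{0,\dots,n-1\}) = f^{-1}(\{0,\dots,n-1\})$, while the lemma as printed asserts equality of the preimages of $\{1,\dots,n-1\}$. The printed version is in fact false: for $k=2$, $f$ the identity permutation and $z=(0,1)$, one gets $g(0)=g(1)=0$ and $g(2)=2$, and with $n=2 \in \range(g)$ we have $g^{-1}(\{1\}) = \emptyset \neq \{1\} = f^{-1}(\{1\})$; the discrepancy sits in the $0$-fibres, where $g^{-1}(0) \supsetneq f^{-1}(0)$. The paper's own proof (which establishes \eqref{eq:preimages}) and the later application in the proof of Theorem~\ref{thm:homeomorphism}, where the lemma is invoked precisely in the form $g^{-1}(\{0,\dots,n-1\}) = f^{-1}(\{0,\dots,n-1\})$, show that $\{1,\dots,n-1\}$ is a typo for $\{0,\dots,n-1\}$; your argument proves the intended statement.
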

\begin{proof}
It is clear that $g \le f$ as functions.

For $i \le k$, we claim that $\{j : g(j) < g(i)\} = \{j : f(j) < g(i)\}$. Since $g \le
f$, we certainly have $\{j : g(j) < g(i)\} \supseteq \{j : f(j) < g(i)\}$. For the
reverse, we suppose that $f(j) \ge g(i)$ and show that $g(j) \ge g(i)$. We have $z_{g(i)}
= 1$ by definition of $g$. So $f(j) \ge g(i)$ implies that
\[
g(j)
    = \max\{l \le f(j) : z_l = 1\}\\
    \ge \max\{l \le g(i) : z_l = 1\}
    = g(i).
\]
This proves the claim. Now to see that $g$ is a placing, fix $i \le k$. There exists $l
\le k$ such that $f(l) = g(i)$. Hence
\[
g(i)
    = f(l)
    = \big|\{j : f(j) < f(l)\}\big|
    = \big|\{j : f(j) < g(i)\}\big|
    = \big|\{j : g(j) < g(i)\}\big|.
\]
So $g$ is a placing. To see that $h(g) = z$, observe that by definition of $g$ we have
$g^{-1}(i) \not= \emptyset$ if and only if  $z_i =1$.

Suppose that $g' \le f$ and $h(g') = z$. Then for $j \le k$ we have $j \in \range(g')
\iff  z_j = 1$. Since $g \le f$, we have $f^{-1}(\{0, \dots, n\}) \subseteq
(g')^{-1}(\{0, \dots, n\})$ for each $n \le k$. Suppose for contradiction that
$f^{-1}(\{0, \dots, n-1\}) \subsetneq (g')^{-1}(\{0, \dots, n-1\})$ for some $n\in
\range(g')$. Then there is a least such $n$; say $f(l) = g'(l') = n$. Then
\begin{align*}
n &= f(l)
    = \big|\{j : f(j) < f(l)\}\big|
    = \big|f^{-1}(\{0, \dots, n-1\})\big|\\
    &< \big|(g')^{-1}(\{0, \dots, n-1\})\big|
    = \big|\{j : g'(j) < g'(l')\}\big|
    = n,
\end{align*}
a contradiction. So
\begin{equation}\label{eq:preimages}
f^{-1}(\{0, \dots, n-1\}) = (g')^{-1}(\{0, \dots, n-1\})\quad\text{ whenever $n \in \range(g')$.}
\end{equation}
Let $n \in \range(g')$ and let $p = \max\{g'(j) : g'(j) < n\}$. Then
\begin{align*}
(g')^{-1}(\{0, \dots, p-1\}) \sqcup (g')^{-1}(\{p\})
    &= (g')^{-1}(\{0, \dots, n-1\})\\
    &= f^{-1}(\{0, \dots, n-1\})\\
    &= f^{-1}(\{0, \dots, p-1\}) \sqcup f^{-1}(\{i : p \le i < n, \}).
\end{align*}

So for $p \in \range(g')$ we have
\[
    (g')^{-1}(p) = f^{-1}\big(\big\{i \in \range(f) :
                    p = \max\{j \in \range(g') : j \le i\}\big\}\big).
\]
Since $i \in \range(g')$ if and only if  $z_i = 1$, we deduce that $g'(j) = \max\{f(l) :
z_j = 1\}$ for all $j$; that is, $g' = g$. The final assertion now follows
from~\eqref{eq:preimages}.
\end{proof}

\begin{proof}[Proof of Proposition~\ref{prp:k-graph}]
Since $\le$ is a partial order, $\Sigma_k$ forms a category with identity morphisms
$(f,f)$. It is clear that $d$ is a functor. So we just need to check the factorisation
property.

Suppose that $(f', f) \in \Sigma_k$ and $d(f', f) = n+m$. Let $g$ be the placing
constructed from $f$ with $z=h(f') + n$ as in Lemma~\ref{lem:tailfactor}. The uniqueness
assertion in Lemma~\ref{lem:tailfactor} implies that
\[
f'(i) = \max\{j < f(i) :  h(f')_i  = 1\} \le  \max\{j < f(i) :  h(f')_i + n_i = 1 \} = g(i)
\]
So we have $(f',g), (g,f) \in \Sigma_k$ with $d(f',g) = n$ and $d(g,f) = m$, and
$(f',g)(g,f) = n+m$. Uniqueness of this factorisation follows from the uniqueness
assertion in Lemma~\ref{lem:tailfactor}.

The range of the degree map is contained in $\{n \in \N^k : n \le \mathbf{1}_k \}$
because $h(f) \le \mathbf{1}_k$ for all $f$. Each $f \in P^{\max}_k$ satisfies $h(f) =
\mathbf{1}_k$, and so $d(0,f) = \mathbf{1}_k$. Then the factorisation property implies
that the range of $d$ is all of $\{n \in \N^k : n \le \mathbf{1}_k \}$.
\end{proof}

\begin{ex}
On the left side of the following picture, the $1$-skeleton of the $2$-simplex $\Sigma_2$
is shown, with vertices labelled as placing functions, where for example $\{2,01\}$
denotes the placing function $f_{\{2,01\}}$ defined by $f_{\{2,01\}}(2)=0$,
$f_{\{2,01\}}(0)=1$ and $f_{\{2,01\}}(1)=1$. On the right hand side is one of the four
faces of the one skeleton of the tetrahedron-shaped $3$-simplex $\Sigma_3$. In a
$3$-graph it is customary to make the third colour in the $1$-skeleton green (or dotted).
A coloured circle around a vertex indicates an edge of the same colour from that vertex
to the central vertex (not shown) labelled $\{0123\}$. The circles vertices are labelled
by placing functions with exactly first and second place specified with circle colour
blue (solid) if there is a unique first place, red (dashed) if there are ties for first
and second place, and green (dotted) if there is a unique third place. For example, there
is a blue edge from $\{0,123\}$ to $\{0123\}$, a red edge from $\{20,13\}$ to $\{0123\}$,
and a green edge from $\{012,3\}$ to $\{0123\}$.
\[
\scalebox{0.70}{
\begin{tikzpicture}[scale=1.3]
    \begin{scope}[decoration={markings,mark=at position 0.6 with {\arrow{angle 45}}},yshift=3cm]
    \node[inner sep=0.5pt, circle,fill=black] (0) at (0,0) {\,};
    \node at (0,-0.4) {$\{012\}$};
    \node[inner sep=0.5pt, circle,fill=black] (a0) at (0,3) {\,};
    \node at (0,3.3) {$\{0,12\}$};
    \node[inner sep=0.5pt, circle,fill=black] (c0) at (-2.598,-1.5) {\,};
    \node at (-3.0,-1.8) {$\{2,01\}$};
    \node[inner sep=0.5pt, circle,fill=black] (b0) at (2.598,-1.5) {\,};
    \node at (3.0,-1.8) {$\{1,02\}$};
    \node[inner sep=0.5pt, circle,fill=black] (a1) at (0,-1.5) {\,};
    \node at (0,-1.8) {$\{12,0\}$};
    \node[inner sep=0.5pt, circle,fill=black] (c1) at (1.3,0.75) {\,};
    \node at (1.7,0.9) {$\{01,2\}$};
    \node[inner sep=0.5pt, circle,fill=black] (b1) at (-1.3,0.75) {\,};
    \node at (-1.7,0.9) {$\{20,1\}$};
    \draw[white] (a0)--(c1) node[pos=0.5, inner sep=0.5pt,circle,fill=black] (a0c1) {\,};
    \node at (1.2,2.0) {$\{0,1,2\}$};
    \draw[white] (c1)--(b0) node[pos=0.5, inner sep=0.5pt,circle,fill=black] (b0c1) {\,};
    \node at (2.5,-0.2) {$\{1,0,2\}$};
    \draw[white] (b0)--(a1) node[pos=0.5, inner sep=0.5pt,circle,fill=black] (a1b0) {\,};
    \node at (1.4,-1.8) {$\{1,2,0\}$};
    \draw[white] (a1)--(c0) node[pos=0.5, inner sep=0.5pt,circle,fill=black] (a1c0) {\,};
    \node at (-1.4,-1.8) {$\{2,1,0\}$};
    \draw[white] (c0)--(b1) node[pos=0.5, inner sep=0.5pt,circle,fill=black] (b1c0) {\,};
    \node at (-2.5,-0.2) {$\{2,0,1\}$};
    \draw[white] (b1)--(a0) node[pos=0.5, inner sep=0.5pt,circle,fill=black] (a0b1) {\,};
    \node at (-1.2,2.0) {$\{0,2,1\}$};
    \draw[red, dashed,postaction={decorate}] (a0)--(a0c1) node[pos=0.5, inner sep=0.5pt, circle] {$\,$};
    \draw[red, dashed,postaction={decorate}] (a0)--(a0b1) node[pos=0.5, inner sep=0.5pt, circle] {$\,$};
    \draw[red, dashed,postaction={decorate}] (b0)--(b0c1) node[pos=0.5, inner sep=0.5pt, circle] {$\,$};
    \draw[red, dashed,postaction={decorate}] (b0)--(a1b0) node[pos=0.5, inner sep=0.5pt, circle] {$\,$};
    \draw[red, dashed,postaction={decorate}] (c0)--(a1c0) node[pos=0.5, inner sep=0.5pt, circle] {$\,$};
    \draw[red, dashed,postaction={decorate}] (c0)--(b1c0) node[pos=0.5, inner sep=0.5pt, circle] {$\,$};
    \draw[red, dashed,postaction={decorate}] (0)--(a1) node[pos=0.5, inner sep=0.5pt, circle] {$\,$};
    \draw[red, dashed,postaction={decorate}] (0)--(b1) node[pos=0.5, inner sep=0.5pt, circle] {$\,$};
    \draw[red, dashed,postaction={decorate}] (0)--(c1) node[pos=0.5, inner sep=0.5pt, circle] {$\,$};
    \draw[blue,postaction={decorate}] (c1)--(a0c1) node[pos=0.5, inner sep=0.5pt, circle] {$\,$};
    \draw[blue,postaction={decorate}] (c1)--(b0c1) node[pos=0.5, inner sep=0.5pt, circle] {$\,$};
    \draw[blue,postaction={decorate}] (a1)--(a1b0) node[pos=0.5, inner sep=0.5pt, circle] {$\,$};
    \draw[blue,postaction={decorate}] (a1)--(a1c0) node[pos=0.5, inner sep=0.5pt, circle] {$\,$};
    \draw[blue,postaction={decorate}] (b1)--(b1c0) node[pos=0.5, inner sep=0.5pt, circle] {$\,$};
    \draw[blue,postaction={decorate}] (b1)--(a0b1) node[pos=0.5, inner sep=0.5pt, circle] {$\,$};
    \draw[blue,postaction={decorate}] (0)--(a0) node[pos=0.5, inner sep=0.5pt, circle] {$\,$};
    \draw[blue,postaction={decorate}] (0)--(b0) node[pos=0.5, inner sep=0.5pt, circle] {$\,$};
    \draw[blue,postaction={decorate}] (0)--(c0) node[pos=0.5, inner sep=0.5pt, circle] {$\,$};
    \end{scope}
    \begin{scope}[decoration={markings,mark=at position 0.6 with {\arrow{angle 45}}},xshift=8cm]
    \node[inner sep=0.5pt, circle,fill=black] (0) at (0,0) {\,};
    \node at (0,-0.4) {$\{012,3\}$};
    \draw[green!50!black, dotted, very thick] (0,0) circle (0.15cm);
    \node[inner sep=0.5pt, circle,fill=black] (a0) at (0,3) {\,};
    \node at (0,3.3) {$\{0,12,3\}$};
    \node[inner sep=0.5pt, circle,fill=black] (c0) at (-2.598,-1.5) {\,};
    \node at (-3.0,-1.8) {$\{2,01,3\}$};
    \node[inner sep=0.5pt, circle,fill=black] (b0) at (2.598,-1.5) {\,};
    \node at (3.0,-1.8) {$\{1,02,3\}$};
    \node[inner sep=0.5pt, circle,fill=black] (a1) at (0,-1.5) {\,};
    \node at (0,-1.8) {$\{12,0,3\}$};
    \node[inner sep=0.5pt, circle,fill=black] (c1) at (1.3,0.75) {\,};
    \node at (2.0,0.9) {$\{01,2,3\}$};
    \node[inner sep=0.5pt, circle,fill=black] (b1) at (-1.3,0.75) {\,};
    \node at (-2.0,0.9) {$\{20,1,3\}$};
    \draw[white] (a0)--(c1) node[pos=0.5, inner sep=0.5pt,circle,fill=black] (a0c1) {\,};
    \node at (1.4,2.0) {$\{0,1,2,3\}$};
    \draw[white] (c1)--(b0) node[pos=0.5, inner sep=0.5pt,circle,fill=black] (b0c1) {\,};
    \node at (2.7,-0.2) {$\{1,0,2,3\}$};
    \draw[white] (b0)--(a1) node[pos=0.5, inner sep=0.5pt,circle,fill=black] (a1b0) {\,};
    \node at (1.4,-1.8) {$\{1,2,0,3\}$};
    \draw[white] (a1)--(c0) node[pos=0.5, inner sep=0.5pt,circle,fill=black] (a1c0) {\,};
    \node at (-1.4,-1.8) {$\{2,1,0,3\}$};
    \draw[white] (c0)--(b1) node[pos=0.5, inner sep=0.5pt,circle,fill=black] (b1c0) {\,};
    \node at (-2.7,-0.2) {$\{2,0,1,3\}$};
    \draw[white] (b1)--(a0) node[pos=0.5, inner sep=0.5pt,circle,fill=black] (a0b1) {\,};
    \node at (-1.4,2.0) {$\{0,2,1,3\}$};
    \draw[red, dashed,postaction={decorate}] (a0)--(a0c1) node[pos=0.5, inner sep=0.5pt, circle] {$\,$};
    \draw[red, dashed,postaction={decorate}] (a0)--(a0b1) node[pos=0.5, inner sep=0.5pt, circle] {$\,$};
    \draw[red, dashed,postaction={decorate}] (b0)--(b0c1) node[pos=0.5, inner sep=0.5pt, circle] {$\,$};
    \draw[red, dashed,postaction={decorate}] (b0)--(a1b0) node[pos=0.5, inner sep=0.5pt, circle] {$\,$};
    \draw[red, dashed,postaction={decorate}] (c0)--(a1c0) node[pos=0.5, inner sep=0.5pt, circle] {$\,$};
    \draw[red, dashed,postaction={decorate}] (c0)--(b1c0) node[pos=0.5, inner sep=0.5pt, circle] {$\,$};
    \draw[red, dashed,postaction={decorate}] (0)--(a1) node[pos=0.5, inner sep=0.5pt, circle] {$\,$};
    \draw[red, dashed,postaction={decorate}] (0)--(b1) node[pos=0.5, inner sep=0.5pt, circle] {$\,$};
    \draw[red, dashed,postaction={decorate}] (0)--(c1) node[pos=0.5, inner sep=0.5pt, circle] {$\,$};
    \draw[blue,postaction={decorate}] (c1)--(a0c1) node[pos=0.5, inner sep=0.5pt, circle] {$\,$};
    \draw[blue,postaction={decorate}] (c1)--(b0c1) node[pos=0.5, inner sep=0.5pt, circle] {$\,$};
    \draw[blue,postaction={decorate}] (a1)--(a1b0) node[pos=0.5, inner sep=0.5pt, circle] {$\,$};
    \draw[blue,postaction={decorate}] (a1)--(a1c0) node[pos=0.5, inner sep=0.5pt, circle] {$\,$};
    \draw[blue,postaction={decorate}] (b1)--(b1c0) node[pos=0.5, inner sep=0.5pt, circle] {$\,$};
    \draw[blue,postaction={decorate}] (b1)--(a0b1) node[pos=0.5, inner sep=0.5pt, circle] {$\,$};
    \draw[blue,postaction={decorate}] (0)--(a0) node[pos=0.5, inner sep=0.5pt, circle] {$\,$};
    \draw[blue,postaction={decorate}] (0)--(b0) node[pos=0.5, inner sep=0.5pt, circle] {$\,$};
    \draw[blue,postaction={decorate}] (0)--(c0) node[pos=0.5, inner sep=0.5pt, circle] {$\,$};
    \node[inner sep=0.5pt, circle,fill=black] (da0) at (0,6) {\,};
    \node at (0,6.3) {$\{0,123\}$};
    \draw[blue] (0,6) circle (0.15cm);
    \node[inner sep=0.5pt, circle,fill=black] (dc0) at (-5.196,-3) {\,};
    \node at (-5.5,-3.3) {$\{2,013\}$};
    \draw[blue] (-5.196,-3) circle (0.15cm);
    \node[inner sep=0.5pt, circle,fill=black] (db0) at (5.196,-3) {\,};
    \node at (5.5,-3.3) {$\{1,023\}$};
    \draw[blue] (5.196,-3) circle (0.15cm);
    \node[inner sep=0.5pt, circle,fill=black] (da1) at (0,-3) {\,};
    \node at (0,-3.3) {$\{12,03\}$};
    \draw[red, dashed] (0,-3) circle (0.15cm);
    \node[inner sep=0.5pt, circle,fill=black] (dc1) at (2.6,1.5) {\,};
    \node at (3.2,1.7) {$\{01,23\}$};
    \draw[red, dashed] (2.6,1.5) circle (0.15cm);
    \node[inner sep=0.5pt, circle,fill=black] (db1) at (-2.6,1.5) {\,};
    \node at (-3.2,1.7) {$\{20,13\}$};
    \draw[red, dashed] (-2.6,1.5) circle (0.15cm);
    \draw[white] (da0)--(dc1) node[pos=0.5, inner sep=0.5pt,circle,fill=black] (da0c1) {\,};
    \node at (2.0,4.0) {$\{0,1,23\}$};
    \draw[white] (dc1)--(db0) node[pos=0.5, inner sep=0.5pt,circle,fill=black] (db0c1) {\,};
    \node at (4.7,-0.7) {$\{1,0,23\}$};
    \draw[white] (db0)--(da1) node[pos=0.5, inner sep=0.5pt,circle,fill=black] (da1b0) {\,};
    \node at (2.8,-3.3) {$\{1,2,03\}$};
    \draw[white] (da1)--(dc0) node[pos=0.5, inner sep=0.5pt,circle,fill=black] (da1c0) {\,};
    \node at (-2.8,-3.3) {$\{2,1,03\}$};
    \draw[white] (dc0)--(db1) node[pos=0.5, inner sep=0.5pt,circle,fill=black] (db1c0) {\,};
    \node at (-4.7,-0.7) {$\{2,0,13\}$};
    \draw[white] (db1)--(da0) node[pos=0.5, inner sep=0.5pt,circle,fill=black] (da0b1) {\,};
    \node at (-2.0,4.0) {$\{0,2,13\}$};
    \draw[red, dashed,postaction={decorate}] (da0)--(da0c1) node[pos=0.5, inner sep=0.5pt, circle] {$\,$};
    \draw[red, dashed,postaction={decorate}] (da0)--(da0b1) node[pos=0.5, inner sep=0.5pt, circle] {$\,$};
    \draw[red, dashed,postaction={decorate}] (db0)--(db0c1) node[pos=0.5, inner sep=0.5pt, circle] {$\,$};
    \draw[red, dashed,postaction={decorate}] (db0)--(da1b0) node[pos=0.5, inner sep=0.5pt, circle] {$\,$};
    \draw[red, dashed,postaction={decorate}] (dc0)--(da1c0) node[pos=0.5, inner sep=0.5pt, circle] {$\,$};
    \draw[red, dashed,postaction={decorate}] (dc0)--(db1c0) node[pos=0.5, inner sep=0.5pt, circle] {$\,$};
    \draw[blue,postaction={decorate}] (dc1)--(da0c1) node[pos=0.5, inner sep=0.5pt, circle] {$\,$};
    \draw[blue,postaction={decorate}] (dc1)--(db0c1) node[pos=0.5, inner sep=0.5pt, circle] {$\,$};
    \draw[blue,postaction={decorate}] (da1)--(da1b0) node[pos=0.5, inner sep=0.5pt, circle] {$\,$};
    \draw[blue,postaction={decorate}] (da1)--(da1c0) node[pos=0.5, inner sep=0.5pt, circle] {$\,$};
    \draw[blue,postaction={decorate}] (db1)--(db1c0) node[pos=0.5, inner sep=0.5pt, circle] {$\,$};
    \draw[blue,postaction={decorate}] (db1)--(da0b1) node[pos=0.5, inner sep=0.5pt, circle] {$\,$};
    \draw[green!50!black,thick,dotted,postaction={decorate}] (da0)--(a0) node[pos=0.5, inner sep=0.5pt, circle] {$\,$};
    \draw[green!50!black,thick, dotted,postaction={decorate}] (db0)--(b0) node[pos=0.5, inner sep=0.5pt, circle] {$\,$};
    \draw[green!50!black,thick, dotted,postaction={decorate}] (dc0)--(c0) node[pos=0.5, inner sep=0.5pt, circle] {$\,$};
    \draw[green!50!black,thick, dotted,postaction={decorate}] (da1)--(a1) node[pos=0.5, inner sep=0.5pt, circle] {$\,$};
    \draw[green!50!black,thick, dotted,postaction={decorate}] (db1)--(b1) node[pos=0.5, inner sep=0.5pt, circle] {$\,$};
    \draw[green!50!black,thick, dotted,postaction={decorate}] (dc1)--(c1) node[pos=0.5, inner sep=0.5pt, circle] {$\,$};
    \draw[green!50!black,thick, dotted,postaction={decorate}] (da0c1)--(a0c1) node[pos=0.5, inner sep=0.5pt, circle] {$\,$};
    \draw[green!50!black,thick, dotted,postaction={decorate}] (da0b1)--(a0b1) node[pos=0.5, inner sep=0.5pt, circle] {$\,$};
    \draw[green!50!black,thick, dotted,postaction={decorate}] (db0c1)--(b0c1) node[pos=0.5, inner sep=0.5pt, circle] {$\,$};
    \draw[green!50!black,thick, dotted,postaction={decorate}] (da1c0)--(a1c0) node[pos=0.5, inner sep=0.5pt, circle] {$\,$};
    \draw[green!50!black,thick,dotted,postaction={decorate}] (db1c0)--(b1c0) node[pos=0.5, inner sep=0.5pt, circle] {$\,$};
    \draw[green!50!black,thick,dotted,postaction={decorate}] (da0c1)--(a0c1) node[pos=0.5, inner sep=0.5pt, circle] {$\,$};
    \draw[green!50!black,thick,dotted,postaction={decorate}] (db0c1)--(b0c1) node[pos=0.5, inner sep=0.5pt, circle] {$\,$};
    \draw[green!50!black,thick,dotted,postaction={decorate}] (da1b0)--(a1b0) node[pos=0.5, inner sep=0.5pt, circle] {$\,$};
    \draw[green!50!black,thick,dotted,postaction={decorate}] (da1c0)--(a1c0) node[pos=0.5, inner sep=0.5pt, circle] {$\,$};
    \draw[green!50!black,thick,dotted,postaction={decorate}] (db1c0)--(b1c0) node[pos=0.5, inner sep=0.5pt, circle] {$\,$};
    \end{scope}
\end{tikzpicture}}
\]
\end{ex}
We will show that the topological realisation of $\Sigma_k$ can be identified with a
$k$-simplex whose extreme points correspond to the $f \in \Sigma_k$ such that $\range(f)
= \{0, 1\}$ for $k \ge 1$ (of course $\range(f)=\{0\}$ if $k=0$, and then $\Sigma_0 = \Sigma_0^0$ is a point, as
is its topological realisation). We use the following notation throughout the construction.

\begin{ntn}
Let $\{\ep_0, \dots, \ep_k\}$ denote the usual basis vectors
for $\R^{k+1}$. Let $v_0 = \frac{1}{k+1} \sum^k_{i=0} \ep_i$, and for $f \in P_k$ and $n
\in \range(f) \setminus \{0\}$, define $v_{f,n} \in \R^{k+1}$ by
\[
v_{f,n} = \frac{1}{n} \sum_{f(j) < n} \ep_j.
\]
The defining property of placing functions ensures that $\|v_{f,n}\|_1 = 1$.

Recall that for $m \in \N^k$ we write $[0,m]$ for generalised interval $\{t \in \R^k : 0
\le t_i \le m_i\text{ for all }i \le k\}$. Given $f \in P_k$, define a map $\phi_f : [0,
h(f)] \to \R^{k+1}$ by $\phi_f(0) = v_0$ and
\begin{equation} \label{eq:phitform}
\phi_f(t) = (1 - \|t\|_\infty)v_0 + \frac{\|t\|_\infty}{\|t\|_1} \sum_{n \in \range(f)\setminus\{0\}} t_n v_{f,n}
\end{equation}
for $t \not= 0$.
\end{ntn}

\begin{thm}\label{thm:homeomorphism}
There is a homeomorphism $i : X_{\Sigma_k} \to \operatorname{conv}\{\ep_0, \ep_1, \dots,
\ep_k\}$ such that
\begin{equation}\label{eq:iformula}
i([(0,f), t]) = \phi_f(t)\quad\text{ for all $f \in P_k$ and $t \in [0,h(f)]$.}
\end{equation}
\end{thm}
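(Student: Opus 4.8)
The plan is to realise $i$ as the descent of an explicit continuous map on the disjoint union defining $X_{\Sigma_k}$, and then to argue that it is a continuous bijection from a compact space to a Hausdorff space, hence a homeomorphism. The organising principle is that \eqref{eq:phitform} realises the barycentric subdivision of $\operatorname{conv}\{\ep_0,\dots,\ep_k\}$: since $\frac{1}{k+1}\sum_{f(j)<k+1}\ep_j=v_0$, the points $v_0$ and $\{v_{f,n}:n\in\range(f)\setminus\{0\}\}$ are the barycentres of a flag of faces of the simplex, and $\phi_f$ maps the cube $[0,h(f)]$ onto the corresponding cell of the subdivision.

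\emph{First reduction.} I would first show every point of $X_{\Sigma_k}$ has a representative $[(0,f),t]$ with $f\in P_k$, $t\in[0,h(f)]$: given $[(g,g'),s]$, the factorisation $(0,g')=(0,g)(g,g')$ and the definition of $\sim$ in \eqref{eq:equiv rel} give $[(g,g'),s]=[(0,g'),s+h(g)]$, with $s+h(g)\in[h(g),h(g')]$. Next, using Lemma~\ref{lem:tailfactor}, I would identify exactly when two such representatives agree. The middle factor $(0,f)(\floor s,\ceil s)$ is the morphism $(a_f,b_f)$, where $a_f\le b_f\le f$ are the unique placings below $f$ with $h(a_f)=\floor t$ and $h(b_f)=\ceil t$ supplied by Lemma~\ref{lem:tailfactor}. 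Hence $[(0,f),t]=[(0,f'),t']$ in $X_{\Sigma_k}$ if and only if $a_f=a_{f'}$, $b_f=b_{f'}$ and $t-\floor t=t'-\floor{t'}$; this triple is the combinatorial datum I must match to the geometry.

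\emph{Well-definedness and continuity.} The terms of \eqref{eq:phitform} with $t_n=0$ vanish, and by the final clause of Lemma~\ref{lem:tailfactor} we have $\{j:f(j)<n\}=\{j:b_f(j)<n\}$ for each $n\in\range(b_f)$, so $v_{f,n}=v_{b_f,n}$ for every contributing $n$; thus $\phi_f(t)=\phi_{b_f}(t)$. Consequently $\phi_f(t)$ depends only on $b_f$, on $\floor t=h(a_f)$, and on $t-\floor t$, i.e.\ only on the triple above, which gives well-definedness of $i$. For continuity I would set $\hat\imath\big((g,g'),s\big)=\phi_{g'}(s+h(g))$ on $\bigsqcup_\lambda\{\lambda\}\times[0,d(\lambda)]$, check each $\phi_{g'}$ is continuous (the only issue is at $t=0$, where $\big\|\tfrac{\|t\|_\infty}{\|t\|_1}\sum_n t_n v_{f,n}\big\|_1\le\|t\|_\infty\to0$ forces $\phi_{g'}(t)\to v_0$), and note that $\hat\imath$ respects $\sim$ by the preceding sentence, so it descends to the continuous map $i$.

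\emph{Surjectivity, injectivity, and conclusion.} For surjectivity I would use that the sets $B_\sigma=\phi_\sigma([0,\mathbf{1}_k])$, as $\sigma$ runs over the maximal placings, are the top cells of the barycentric subdivision (sorting the coordinates of any $x$ produces a maximal $\sigma$ with $x\in B_\sigma$), and that inverting \eqref{eq:phitform} (solve $\|t\|_\infty=1-\beta_0$ from the barycentric coordinates $\beta$ of the image, then the $t_n$) shows $\phi_\sigma$ maps the cube bijectively onto $B_\sigma$. For injectivity I would recover the triple from $x=\phi_f(t)$: the coefficient $c_j$ of $\ep_j$ in $x$ is a nonincreasing function of $f(j)$, with $c_j>c_{j'}$ precisely when some active value ($t_n>0$) lies between $f(j)$ and $f(j')$, so ordering the coordinates of $x$ with ties returns $b_f$; the least coordinate returns $\|t\|_\infty$ and the successive gaps return the $t_n$, hence all of $t$ and its fractional part; and $a_f$ is then the unique placing below $b_f$ of height $\floor t$, once more by Lemma~\ref{lem:tailfactor}. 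Thus $x$ determines the triple and $i$ is injective. Finally, $\Sigma_k$ is finite, so $X_{\Sigma_k}$ is a quotient of a finite union of compact cubes and hence compact, while $\operatorname{conv}\{\ep_0,\dots,\ep_k\}$ is Hausdorff, so the continuous bijection $i$ is a homeomorphism. I expect the crux to be this dictionary between the equivalence relation on $\Sigma_k$ and the geometry of \eqref{eq:phitform}: the identifications $\phi_f=\phi_{b_f}$ and the reading-off of $(a_f,b_f,t-\floor t)$ from the barycentric coordinates of $x$, since these simultaneously deliver well-definedness and injectivity, and the bookkeeping with floor and ceiling placings (the coalescing of faces when some $t_n=0$) is where the argument is most delicate.
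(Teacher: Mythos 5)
Your argument is correct, and its overall skeleton --- reduce every point to a representative $[(0,f),t]$, show that \eqref{eq:iformula} descends to a well-defined continuous bijection, and conclude because $X_{\Sigma_k}$ is compact and the simplex is Hausdorff --- is exactly the paper's; but the mechanism by which you establish injectivity is genuinely different, and you prove something the paper leaves implicit. The paper establishes the key equivalence $[(0,f),s]=[(0,g),t]\iff\phi_f(s)=\phi_g(t)$ by working inside the $k$-graph: given $\phi_f(s)=\phi_g(t)$, Lemma~\ref{lem:technical 1} produces, for each $i$ with $t_i>0$, a common edge $\alpha_i\in\Sigma_k^{e_i}$ with $(0,f),(0,g)\in\alpha_i\Sigma_k$; uniqueness of minimal common extensions (Lemma~\ref{lem:MCEs}) then forces the two ceiling placings to coincide, and only then does injectivity of a single $\phi_{f'}$ (Lemma~\ref{lem:phif-inj}) give $s=t$. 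You instead invert the geometry directly: sorting the coordinates of $x=\phi_f(t)$ with ties recovers the ceiling placing $b_f$, and the minimum coordinate together with the successive gaps recovers $\|t\|_\infty$, $\|t\|_1$ and the $t_n$, hence the whole triple $(a_f,b_f,t-\floor{t})$; this bypasses Lemma~\ref{lem:MCEs} entirely (your sorting claim is in substance a sharpened form of Lemma~\ref{lem:technical 1}, and needs the same kind of coordinate computation to justify --- note the orientation: $c_j>c_{j'}$ requires $f(j)<f(j')$ with an active value in between). Your well-definedness step is identical in substance to the paper's forward implication: both rest on the final clause of Lemma~\ref{lem:tailfactor}, the paper reducing to the case $g\le f$ via the meet $f\wedge g$, which is precisely your $b_f$. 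Finally, the paper simply asserts that the descended injection is onto $\operatorname{conv}\{\ep_0,\dots,\ep_k\}$, whereas your barycentric-subdivision argument (every point lies in $B_\sigma=\phi_\sigma([0,\mathbf{1}_k])$ for $\sigma$ a sorting permutation, and $\phi_\sigma$ covers that cell) fills this gap explicitly --- a worthwhile addition. The trade-off: the paper's route reuses the $k$-graph combinatorics it has already built ($\MCE$s and the placing lemmas), while yours is more elementary and self-contained, at the cost of some bookkeeping (the recovery of $\|t\|_1$ and the $t_n$ from the gaps) that would need to be written out carefully in a full proof.
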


The reader is invited to use \eqref{eq:phitform} to see that the
formula~\eqref{eq:iformula} carries the topological realisation of $\Sigma_2$ to the
simplex in $\R^3$ whose extreme points are the three unit basis vectors $\ep_0, \ep_1,
\ep_2$; for example, that the placing function $\{20,1\}$ is sent to $(\ep_0+\ep_2)/2$.

We need some technical lemmas to prove the theorem.

\begin{lem}\label{lem:technical 1}
Let $f \in P_k$ and $t \in [0, h(f)]$. For $i \le k$, we have $t_i \not= 0$ if and only
if there exists $r \in \R$ such that $|\{j : \phi_f(t)_j > r\}| = i$. Suppose that $|\{j
: \phi_f(t)_j > r\}| = i$, and define $g : \{0, \dots, k\} \to \{0, \dots, k\}$ by
\[
g(j) = \begin{cases}
        i&\text{ if $\phi_f(t)_j \le r$}\\
        0&\text{ if $\phi_f(t)_j > r$.}
    \end{cases}
\]
Then $g \in P_k$, $h(g) = \ep_i$ and $g \le f$.
\end{lem}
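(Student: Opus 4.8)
The plan is to exploit the fact that, for fixed $f$ and $t$, the coordinate $\phi_f(t)_j$ depends on $j$ only through $f(j)$. Reading off \eqref{eq:phitform} coordinatewise and using that $(v_{f,n})_j = \tfrac1n$ exactly when $f(j) < n$, one gets $\phi_f(t)_j = (1-\|t\|_\infty)\tfrac{1}{k+1} + \tfrac{\|t\|_\infty}{\|t\|_1}\sum_{n \in \range(f),\, n > f(j)} \tfrac{t_n}{n}$. Write $\Psi(a)$ for this common value on the level set $f^{-1}(a)$, and list $\range(f)$ as $0 = a_0 < a_1 < \dots < a_p$. A one-line computation gives $\Psi(a_{l-1}) - \Psi(a_l) = \tfrac{\|t\|_\infty}{\|t\|_1}\tfrac{t_{a_l}}{a_l}$, so (for $t \ne 0$) $\Psi$ is non-increasing and is strictly decreasing across $a_l$ if and only if $t_{a_l} > 0$. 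The case $t = 0$, where $\phi_f(t) = v_0$ is constant, is immediate and yields only the cardinalities $0$ and $k+1$ below.

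Next I would analyse the set $A_r := \{j : \phi_f(t)_j > r\}$. Since $\phi_f(t)_j = \Psi(f(j))$ with $\Psi$ non-increasing, the set of values $a$ with $\Psi(a) > r$ is an initial segment $\{a_0, \dots, a_{l-1}\}$, so $A_r$ is a union of complete level sets and equals $\{j : f(j) < a_l\}$ for some $l$. The placing property \eqref{eq:placing} then pins the cardinality down: $|A_r| = |\{j : f(j) < a_l\}| = a_l$. Hence the achievable cardinalities $|A_r|$ are exactly $0$, $k+1$, and those $a_l$ at which $\Psi$ drops, i.e.\ those $a_l$ with $t_{a_l} > 0$. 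For $1 \le i \le k$ this shows that $|A_r| = i$ for some $r$ if and only if $i \in \range(f)$ and $t_i > 0$; and since $t \in [0,h(f)]$ forces $t_i = 0$ whenever $i \notin \range(f)$, this is equivalent to $t_i \ne 0$, proving the first assertion.

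For the remaining assertions, suppose $|A_r| = i$. By the previous paragraph $A_r = \{j : f(j) < i\}$, so the function $g$ satisfies $g^{-1}(0) = A_r = \{j : f(j) < i\}$ and $g^{-1}(i) = \{j : f(j) \ge i\}$. Both preimages are nonempty, of sizes $i \ge 1$ and $k+1-i \ge 1$, so $g$ takes exactly the values $0$ and $i$, giving $h(g) = \ep_i$. To check $g \in P_k$ I verify \eqref{eq:placing} directly: when $g(j) = 0$ both sides vanish, and when $g(j) = i$ the right-hand side is $|\{j' : g(j') = 0\}| = |A_r| = i$. Finally $g \le f$ holds because $g(j) = 0 \le f(j)$ trivially, while $g(j) = i$ forces $f(j) \ge i = g(j)$ by the description of $g^{-1}(i)$.

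The main obstacle is the bookkeeping in the second paragraph: one must be careful that $A_r$ is genuinely a union of \emph{entire} level sets (so that equal $\phi_f(t)$-values are never split by a threshold) and that its initial-segment structure, combined with the placing identity $|\{j : f(j) < a_l\}| = a_l$, forces the cardinality to be a range value of $f$. Once this identification of $A_r$ with $\{j : f(j) < i\}$ is established, the verifications of the placing property, of $h(g) = \ep_i$, and of $g \le f$ are routine consequences of the definitions.
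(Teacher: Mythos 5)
Your proof is correct. It rests on the same underlying phenomenon as the paper's proof --- the coordinates of $\phi_f(t)$ are constant on the level sets of $f$ and decrease weakly as $f(j)$ increases, strictly exactly across those values $n$ with $t_n > 0$ --- but the execution is genuinely different and more self-contained. The paper chooses a permutation $\sigma$ with $f\circ\sigma$ nondecreasing and compares consecutive sorted coordinates of $\phi_f(t)$ to get the first assertion; then, to prove $g \le f$, it establishes the implication $\phi_f(t)_{j'} > \phi_f(t)_j \Rightarrow f(j') < f(j)$ by an argument that invokes Lemma~\ref{lem:tailfactor}, and finishes with a counting estimate against the placing identity \eqref{eq:placing}. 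You instead write $\phi_f(t)_j = \Psi(f(j))$ as a function of the level alone, compute the drop $\Psi(a_{l-1}) - \Psi(a_l) = \frac{\|t\|_\infty}{\|t\|_1}\,\frac{t_{a_l}}{a_l}$ explicitly, and conclude that every threshold set $A_r$ is a union of complete level sets of the form $\{j : f(j) < a_l\}$, whose cardinality is $a_l$ by \eqref{eq:placing}. This yields something the paper's proof never states: the exact identification $A_r = \{j : f(j) < i\}$, hence $g^{-1}(i) = \{j : f(j) \ge i\}$, from which $g \in P_k$, $h(g) = \ep_i$ and $g \le f$ all follow in one line, with no sorting permutation and no appeal to Lemma~\ref{lem:tailfactor}. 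What the paper's formulation buys is economy at the level of the whole section (Lemma~\ref{lem:tailfactor} and the sorted-coordinate picture are needed again in the proofs of Lemma~\ref{lem:MCEs} and Theorem~\ref{thm:homeomorphism}, so reusing them here costs nothing); what yours buys is a sharper and cleaner standalone proof, since the exact description of the threshold set replaces the paper's inequality-by-counting step.
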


\begin{proof}
Choose a permutation $\sigma$ of $\{0, \dots, k\}$ such that $f \circ \sigma$ is
nondecreasing. Then each $(v_{f,n})_{\sigma(i)} = 1/n$ if $i \le n$ and $0$ if $i > n$,
and $(v_0)_{\sigma(i)} = 1/(k+1)$ for all $i$. In particular, each $w \in \{v_0\} \cup
\{v_{f,n} : n \le k\}$ satisfies $w_{\sigma(0)} \ge w_{\sigma(1)} \ge \dots \ge
w_{\sigma(k)}$, and $w_{\sigma(i)} = w_{\sigma(i+1)}$ unless $w = v_{f,i+1}$. Hence each
\begin{align*}
\phi_f(t)_{\sigma(i)}
    &= (1 - \|t\|_\infty)(v_0)_{\sigma(i)}
        + \frac{\|t\|_\infty}{\|t\|_1} \sum_{n \in \range(f)\setminus\{0\}} t_n (v_{f,n})_{\sigma(i)}\\
    &\ge (1 - \|t\|_\infty)(v_0)_{\sigma(i+1)}
        + \frac{\|t\|_\infty}{\|t\|_1} \sum_{n \in \range(f)\setminus\{0\}} t_n (v_{f,n})_{\sigma(i+1)},
\end{align*}
with strict inequality if and only if $t_{i+1} > 0$ (since $t \le h(f)$, this forces $i +
1 \in \range(f)\setminus\{0\}$). This proves the first assertion.

To prove the second assertion, observe that $g \in P_k$ by choice of $r$, and $h(g) =
e_i$ by definition of $h$. We claim that whenever $\phi_f(t)_{j'} > \phi_f(t)_j$, we have
$f(j') < f(j)$. To prove this, first suppose that $j,j' \le k$ satisfy $\phi_f(t)_{\sigma(j')}
> \phi_f(t)_{\sigma(j)}$. Then the preceding paragraph shows that $j' < j$ and $t_{l+1} >
0$ for some $j' \le l < j$. Since $f \circ \sigma$ is a nondecreasing placing function,
it is dominated in the ordering on $P_k$ by the identity permutation. So
Lemma~\ref{lem:tailfactor} implies that
\[
    f \circ \sigma(j') = \max\{n \le j' : h(f)_n = 1\} \le j' \le l.
\]
Since $t < h(f)$, we also have
\[
f \circ \sigma(j)
    = \max\{n \le j : h(f)_n = 1\}
    \ge \max\{n \le j : t_n > 0\}
    \ge l+1.
\]
In particular, $f\circ\sigma(j') < f\circ\sigma(j)$. Since $\sigma$ is invertible, we
deduce that $\phi_f(t)_{j'} > \phi_f(t)_j$ implies $f(j') < f(j)$ for all $j,j'$. This
proves the claim.

To show that $g \le f$, observe that $g(j) = 0 \le f(j)$ whenever $\phi_f(t)_j > r$;
and if $\phi_f(t)_j \le r$, then the claim gives
\[
f(j)
    = |\{l : f(l) < f(j)\}|
    \ge |\{l : \phi_f(t)_l > \phi_f(t)_j\}|
    \ge |\{l : \phi_f(t)_l > r\}|
    = i = g(j).\qedhere
\]
\end{proof}

For the next lemma, if $\Lambda$ is a $k$-graph and $F$ is a finite subset of $\Lambda$,
then we define
\[\textstyle
\MCE(F) = \big\{\lambda \in \Lambda : \lambda \in \alpha\Lambda\text{ for all }
    \alpha, \in F\text{ and }d(\lambda) = \bigvee_{\alpha \in F} d(\alpha)\big\} ,
\]
so we have $\MCE(\alpha, \beta) = \MCE(\{\alpha,\beta\})$ for all $\alpha,\beta \in \Lambda$.
If $F$ is a finite subset of $\Lambda$, $\lambda \in F$ and $G = F \setminus\{\lambda\}$,
then
\begin{equation}\label{eq:MCE inducts}
\MCE(F) = \bigcup_{\mu \in \MCE(G)} \MCE(\lambda, \mu).
\end{equation}

\begin{lem}\label{lem:MCEs}
For $F \subseteq \{(0,f) : f \in P_k\} \subseteq \Sigma_k$, we have $|\MCE(F)| \le 1$.
\end{lem}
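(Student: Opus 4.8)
The plan is to first unwind the definition of $\MCE(F)$ inside the specific category $\Sigma_k$, and then reduce the whole statement to a uniqueness assertion about placings that is essentially contained in Lemma~\ref{lem:tailfactor}.

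First I would describe $\MCE(F)$ explicitly. Writing $F = \{(0,f_1), \dots, (0,f_m)\}$ (the statement concerns nonempty $F$), note that the only morphisms extending a given $(0,f_i)$ are those of the form $(0,g)$ with $g \ge f_i$, since any $\lambda \in (0,f_i)\Sigma_k$ must be $(0,f_i)(f_i,g) = (0,g)$ for some $g \ge f_i$. In particular every element of $\MCE(F)$ has the form $(0,g)$, and such a morphism lies in $\MCE(F)$ precisely when $g \ge f_i$ for all $i$ and $d(0,g) = h(g) = \bigvee_i h(f_i) =: z$. So it suffices to prove there is at most one placing $g \in P_k$ with $g \ge f_i$ for all $i$ and $h(g) = z$.

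The key point is that each constraint $f_i \le g$, combined with $h(f_i) \le h(g) = z$, forces $f_i$ to be the "round-down" of $g$ supplied by Lemma~\ref{lem:tailfactor}: that lemma's uniqueness clause identifies $f_i$ as the unique placing below $g$ of height $h(f_i)$, and equation~\eqref{eq:preimages} from its proof then yields the sublevel-set identity
\[
\{j : g(j) < n\} = \{j : f_i(j) < n\}\qquad\text{for all } n \in \range(f_i).
\]
Because $z = \bigvee_i h(f_i)$, we have $\range(g)\setminus\{0\} = \bigcup_i\big(\range(f_i)\setminus\{0\}\big)$, so for every $n \in \range(g)$ there is some index $i$ with $n \in \range(f_i)$; for that $i$ the identity above expresses the sublevel set $\{j : g(j) < n\}$ entirely in terms of the known function $f_i$, hence in terms of $F$.

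Finally I would reconstruct $g$ from its sublevel sets. Listing $\range(g) = \{0 = n_0 < n_1 < \dots < n_r\}$, which is determined by $z$ and hence by $F$, the sets $A_{n_l} := \{j : g(j) < n_l\}$ form an increasing chain with $A_{n_0} = \emptyset$ and $g^{-1}(n_l) = A_{n_{l+1}} \setminus A_{n_l}$ (with the convention $A_{n_{r+1}} = \{0,\dots,k\}$). Since each $A_{n_l}$ is determined by $F$, so is every preimage $g^{-1}(n_l)$, and therefore $g$ is uniquely determined by $F$. This gives $|\MCE(F)| \le 1$. The crux of the argument is the middle step: correctly invoking Lemma~\ref{lem:tailfactor} to turn each comparison $f_i \le g$ into the sublevel-set equality, and recognising that the height hypothesis $h(g) = \bigvee_i h(f_i)$ is exactly what ensures every level $n \in \range(g)$ is witnessed by some $f_i$; the remaining reconstruction of a placing from its sublevel sets is routine.
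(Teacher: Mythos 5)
Your proof is correct, but it takes a genuinely different route from the paper's. The paper argues by induction on $|F|$, using the identity $\MCE(F) = \bigcup_{\mu \in \MCE(G)} \MCE(\lambda,\mu)$ of equation~\eqref{eq:MCE inducts} to reduce to the two-element case; that base case is then settled by choosing a maximal placing $\sigma$ dominating the candidate $a \in \MCE((0,f),(0,g))$, invoking the uniqueness clause of Lemma~\ref{lem:tailfactor} to express $f$, $g$ and $a$ all as round-downs of $\sigma$, and concluding the pointwise formula $a(j) = \max\{f(j),g(j)\}$, which determines $a$. You instead handle arbitrary finite $F$ in a single pass: after identifying the elements of $\MCE(F)$ as the morphisms $(0,g)$ with $g \ge f_i$ for all $i$ and $h(g) = \bigvee_i h(f_i)$ (exactly as the paper does for pairs), you apply Lemma~\ref{lem:tailfactor} with the candidate $g$ itself as the top placing, so that each $f_i$ is its round-down and the identity~\eqref{eq:preimages} pins down every sublevel set $\{j : g(j) < n\}$, $n \in \range(g)$, in terms of the $f_i$; reconstructing $g$ from its sublevel sets then gives uniqueness. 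Your version buys two things: it avoids the induction entirely, and it avoids the paper's appeal to the existence of a maximal placing dominating $a$ (true, but asserted without proof in the paper), at the cost of the slightly fussier sublevel-set reconstruction; the paper's version, in exchange, produces the clean explicit formula $a = f \vee g$ in the pairwise case. One detail you got right that is worth noting: you cite equation~\eqref{eq:preimages} from the proof of Lemma~\ref{lem:tailfactor} rather than the final assertion in that lemma's statement --- the latter reads $\{1, \dots, n-1\}$ where it should read $\{0, \dots, n-1\}$, and only the corrected version yields the sublevel-set identity your argument needs.
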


\begin{proof}
We proceed by induction on $|F|$; if $|F| \le 1$ then the statement is trivial.

Suppose that $|F| = 2$, say $F = \{(0,f), (0,g)\}$. We have $\MCE((0,f), (0,g)) = \{(0,
a) : h(a) = h(f) \vee h(g)\text{ and } f,g \le a\}$. So suppose that $a \in P_k$
satisfies $h(a) = h(f) \vee h(g)$ and $f,g \le a$. Choose $\sigma \in P^{\max}_k$ such
that $a \le \sigma$. Since $\sigma$ is surjective, Lemma~\ref{lem:tailfactor} implies
that for $j \le k$,
\begin{gather*}
f(j) = \max\{j \le \sigma(i) : h(f)_j = 1\},\quad
    g(j) = \max\{j \le \sigma(i) : h(g)_j = 1\},\quad\text{ and }\\
    a(j) = \max\{j \le \sigma(i) : h(f)_j = 1\text{ or } h(g)_j = 1\}.
\end{gather*}
So $a(j) = \max\{f(j), g(j)\}$ for all $j$, and in particular, $a$ is uniquely determined by $f,g$.

Now suppose that $|\MCE(G)| \le 1$ whenever $|G| < |F|$. Fix $\lambda \in F$ and let $G =
F \setminus\{\lambda\}$. Property~\eqref{eq:MCE inducts} above gives $\MCE(F) =
\bigcup_{\mu \in \MCE(G)} \MCE(\lambda, \mu)$. The inductive hypothesis implies that
there is at most one $\mu \in \MCE(G)$, and then the result follows from the base case.
\end{proof}

\begin{lem}\label{lem:phif-inj}
For $f \in P_k$ the map $\phi_f$ of Equation~\ref{eq:phitform} is injective.
\end{lem}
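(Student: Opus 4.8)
The plan is to show that the vector $\phi_f(t)$ determines $t$ completely, which is precisely injectivity. Fix once and for all a permutation $\sigma$ of $\{0,\dots,k\}$ for which $f\circ\sigma$ is nondecreasing, exactly as in the proof of Lemma~\ref{lem:technical 1}. The computation carried out there shows that $(v_0)_{\sigma(i)} = \tfrac{1}{k+1}$ for every $i$, and that $(v_{f,n})_{\sigma(i)}$ equals $\tfrac1n$ when $i < n$ and $0$ when $i \ge n$ (this uses that $|\{j : f(j) < n\}| = n$ for $n \in \range(f)$). Substituting these values into~\eqref{eq:phitform} gives, for $t \ne 0$,
\[
\phi_f(t)_{\sigma(i)} = \frac{1-\|t\|_\infty}{k+1} + \frac{\|t\|_\infty}{\|t\|_1}\sum_{\substack{n\in\range(f)\setminus\{0\}\\ n > i}}\frac{t_n}{n}.
\]
In particular $\phi_f(t)_{\sigma(0)} \ge \phi_f(t)_{\sigma(1)} \ge \cdots \ge \phi_f(t)_{\sigma(k)}$, so reading off the coordinates $\phi_f(t)_{\sigma(i)}$ for $i = 0,\dots,k$ recovers these numbers in nonincreasing order from the vector $\phi_f(t)$ alone (since $\sigma$ depends only on the fixed $f$).

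From this formula I would extract two pieces of data directly from $\phi_f(t)$. First, taking $i = k$ makes the sum empty, so the smallest coordinate of $\phi_f(t)$ equals $\tfrac{1-\|t\|_\infty}{k+1}$; hence $\|t\|_\infty = 1 - (k+1)\min_j \phi_f(t)_j$ is determined by $\phi_f(t)$. Second, for each $n \in \{1,\dots,k\}$ the consecutive difference telescopes to $\phi_f(t)_{\sigma(n-1)} - \phi_f(t)_{\sigma(n)} = \tfrac{\|t\|_\infty}{\|t\|_1}\tfrac{t_n}{n}$ (the surviving term is $m=n$, which occurs only when $n \in \range(f)$, but $t_n = 0$ otherwise), so the quantity $\tfrac{\|t\|_\infty}{\|t\|_1} t_n = n\big(\phi_f(t)_{\sigma(n-1)} - \phi_f(t)_{\sigma(n)}\big)$ is determined by $\phi_f(t)$ for every $n$.

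The remaining, and only slightly delicate, step is to disentangle $t$ from the common scalar $c := \|t\|_\infty/\|t\|_1 > 0$, since the differences recover only the products $c\,t_n$, i.e.\ the ray through $t$ rather than $t$ itself. Here I would use that $\max_n t_n = \|t\|_\infty$, so $\max_n(c\,t_n) = c\,\|t\|_\infty$; as $\|t\|_\infty$ is already known and strictly positive (for $t \ne 0$), this pins down $c = \max_n(c\,t_n)/\|t\|_\infty$, and then $t_n = (c\,t_n)/c$ for all $n$. Thus $\phi_f(t)$ determines $t$. The case $t = 0$ is covered by the same expression for $\|t\|_\infty$: if $\phi_f(t) = v_0$ then every coordinate equals $\tfrac{1}{k+1}$, forcing $\|t\|_\infty = 0$ and hence $t = 0$. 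Consequently $\phi_f(s) = \phi_f(t)$ implies $s = t$, so $\phi_f$ is injective.
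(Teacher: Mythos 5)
Your proof is correct, and at its core it runs on the same mechanism as the paper's: both arguments reduce injectivity to the fact that $\phi_f(t)$ determines the number $1-\|t\|_\infty$ together with the products $\frac{\|t\|_\infty}{\|t\|_1}t_n$, and both then resolve the remaining scalar ambiguity in the same way, using the already-known value of $\|t\|_\infty$ (the paper phrases this as $s=\lambda t$ with $\lambda\|t\|_\infty = \|s\|_\infty = \|t\|_\infty$, hence $\lambda=1$; you phrase it as recovering $c=\|t\|_\infty/\|t\|_1$ from $\max_n(c\,t_n)=c\,\|t\|_\infty$). The genuine difference is in how that data is extracted from $\phi_f(t)$. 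The paper simply asserts that $v_0, v_{f,1},\dots,v_{f,n}$ are linearly independent and equates coefficients; you instead order the coordinates by a permutation $\sigma$ making $f\circ\sigma$ nondecreasing and reconstruct the data explicitly --- $\|t\|_\infty$ from the smallest coordinate, and $\frac{\|t\|_\infty}{\|t\|_1}t_n$ from the telescoping differences $n\big(\phi_f(t)_{\sigma(n-1)}-\phi_f(t)_{\sigma(n)}\big)$, with the case $n\notin\range(f)$ correctly dismissed via $t_n=0$. Your staircase computation $(v_{f,n})_{\sigma(i)}=\frac1n$ precisely when $i<n$ is in effect a proof of the linear-independence claim that the paper leaves unproved, so your version is more self-contained at the cost of length; it also quietly corrects the off-by-one ``$i\le n$'' in the paper's proof of Lemma~\ref{lem:technical 1}, which is inconsistent with that proof's own observation that $w_{\sigma(i)}=w_{\sigma(i+1)}$ unless $w=v_{f,i+1}$. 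One stylistic simplification: the remark about recovering the sorted values ``from the vector alone'' is unnecessary --- since $\sigma$ is fixed by $f$, the hypothesis $\phi_f(s)=\phi_f(t)$ already lets you compare the $\sigma(i)$-th coordinates directly.
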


\begin{proof}
We have $\phi_f(s) = v_0$ if and only if $s = 0$, so we suppose that $s, t \not= 0$ and
$\phi_f(s) = \phi_f(t)$ and show that $s = t$. The vectors $v_0, v_{f, 1}, \dots,
v_{f,n}$ are linearly independent, so $\phi_f(s) = \phi_f(t)$ implies $1 - \|s\|_\infty =
1 - \|t\|_\infty$ and $\frac{\|s\|_\infty}{\|s\|_1} s_n = \frac{\|t\|_\infty}{\|t\|_1}
t_n$ for all $n$. In particular, $s = \lambda t$ for some $\lambda \in (0, \infty)$, and
$\lambda \|t\|_\infty = \|\lambda t\|_\infty = \|s\|_\infty = \|t\|_\infty$. As $s
\not= 0$ we have $\lambda = 1$ and hence $s = t$.
\end{proof}

\begin{proof}[Proof of Theorem~\ref{thm:homeomorphism}]
Each $\phi_f$ is a continuous map. By definition of $X_{\Sigma_k}$, each point in
$X_{\Sigma_k}$ has the form $[(g,f), t]$ where $g \le f \in P_k$ and $h(g) \le t \le
h(f)$. Since $((g,f), t) \sim ((0, f), t + h(g))$, it follows that every point in
$X_{\Sigma_k}$ can be expressed as $[(0,f), s]$ for some $f \in P_k$ and $s \in [0,
h(f)]$.

So it suffices to show that $[(0,f), s] = [(0,g), t]$ if and only if $\phi_f(s) =
\phi_g(t)$; for then~\eqref{eq:iformula} descends to a continuous bijection between
$X_{\Sigma_k}$ and $\operatorname{conv}\{\ep_0, \ep_1, \dots, \ep_k\}$, which is then a
homeomorphism because both sets are compact and Hausdorff.

First suppose that $[(0,f), s] = [(0, g), t]$. Then $s - \floor{s} = t - \floor{t}$, and
$(0,f)(\floor{s}, \ceil{s}) = (0,g)(\floor{t}, \ceil{t})$. Hence
\[
\floor{s}
    = h\big(r\big((0,f)(\floor{s}, \ceil{s})\big)\big)
    = h\big(r\big((0,g)(\floor{t}, \ceil{t})\big)\big)
    = \floor{t},
\]
and combining this with $s - \floor{s} = t - \floor{t}$ gives $s = t$. By the
factorisation property there is a unique factorisation $(0,f)=\alpha\beta$ in $\Sigma_k$
with $d(\alpha)=\ceil{s}$. Define $f \wedge g$ to be the placing function for
$s(\alpha)$. Then $f \wedge g \le f,g$, and
\[
[(0,f), s] = [(0, f \wedge g), s] = [(0,g), t].
\]
So we may assume without loss of generality that $g \le f$, and so $s \le h(g) \le h(f)$.
Whenever $s_n \not= 0$ we have $h(g)_n = h(f)_n = 1$ and hence $n \in \range(g) \subseteq
\range(f)$. So the final assertion of Lemma~\ref{lem:tailfactor} implies that
$g^{-1}(\{0, \dots, n-1\}) = f^{-1}(\{0, \dots, n-1\})$, and hence $v_{g,n} = v_{f,n}$.
So $\phi_g(s) = \phi_f(s)$ as required.

Now suppose that $\phi_f(s) = \phi_g(t)$. The first assertion of Lemma~\ref{lem:technical
1} implies that $s_i > 0$ if and only if $t_i > 0$, and so $\ceil{s} = \ceil{t}$. The
second assertion of Lemma~\ref{lem:technical 1} combined with the factorisation property
in $\Sigma_k$ implies that for each $i$ such that $t_i > 0$ we have $d(0,f), d(0,g) \ge
e_i$ and the unique edge $\alpha_i \in \Sigma_k^{e_i}$ such that $(0,f) = \alpha_i
\tau_i$ for some $\tau_i$ also satisfies $(0,g) = \alpha_i \rho_i$ for some $\rho_i$. By
Lemma~\ref{lem:tailfactor} there are unique elements $f', g' \in P_k$ such that $h(f') =
h(g') = \ceil{s}$ and $f' \le f$ and $g' \le g$. The factorisation property implies that
$(0,f'), (0,g') \in \alpha_i \Sigma_k$ for all $i$. Since $d((0,f')) = d((0,g')) =
\bigvee_{t_i > 0} d(\alpha_i)$, we then have $(0,f'), (0,g') \in \MCE(\{\alpha_i : t_i >
0\})$. So Lemma~\ref{lem:MCEs} implies that $|\MCE(\{\alpha_i : t_i
> 0\})| \le 1$, and we deduce that $f' = g'$. The argument of the preceding paragraph
shows that
\[
\phi_{f'}(s) = \phi_f(s) = \phi_g(t) = \phi_{g'}(t),
\]
and since $f' = g'$, Lemma~\ref{lem:phif-inj} implies that $s = t$. Hence
\[
((0, f),s) \sim ((0, f'), s) = ((0, g'), t) \sim ((0, g), t). \qedhere
\]
\end{proof}

\begin{proof}[Proof of Theorem~\ref{thm:spheres realised}]
Consider the set $\{0, 1\}$, regarded as a $0$-graph, and the $k$-graph $\Sigma_k$
described in Theorem~\ref{thm:homeomorphism}. Form the cartesian product $k$-graph
$\Lambda = \{0, 1\} \times \Sigma_k$. Define a relation $\sim$ on $\Lambda$ by $(i,
(f,g)) \sim (j, (f',g'))$ if and only if $(f,g) = (f', g') \in \Sigma_k$ and $f \not= 0$.
It is straightforward to check that this relation satisfies conditions
(\ref{it:sim-d})--(\ref{it:sim-lift}) of Proposition~\ref{prp:quotient k-graph}, and so
we may form the quotient $k$-graph $\Gamma = \Lambda/{\sim}$.

We show that $X_\Gamma$ is homeomorphic to a $k$-sphere. Proposition~\ref{prp:topological
gluing} shows that $X_\Gamma$ is the quotient of $\{0,1\} \times
X_{\Sigma_k}$ by the equivalence relation $(i, [(f,g), t]) \sim (j, [(f,g), t])$ whenever
$f \not= 0$.

We claim that the homeomorphism $i : X_{\Sigma_k} \to \conv\{\ep_0, \dots, \ep_k\}
\subseteq \R^{k+1}$ carries $\{[(f,g), t] : f \not= 0\}$ to $\bigcup^k_{i=1}
\conv\{\ep_0, \dots, \ep_{i-1}, \ep_{i+1}, \dots, \ep_k\}$, which is the surface of the
$k$-simplex with extreme points $\ep_0, \dots, \ep_k$. To see this, observe that
\eqref{eq:phitform} implies that for any $f \in P_k$ and $n \in \range(f) \setminus
\{0\}$, we have $v_{f,n} \in \conv\{\ep_j : f(j) < \|f\|_{\infty}\}$. There exists some
$i_f$ such that $f(i_f) = \|f\|_\infty$, and so
\[
\conv\{v_{f,n} : n \in \range{f} \setminus \{0\}\}
     \subseteq \conv\{\ep_0, \dots, \ep_{i-1}, \ep_{i+1}, \dots, \ep_k\}.
\]
Consider $i([(f,g), t])$ where $f \not= 0$. Since some $f(i) > 0$ we have $h(f) > 0$ and
therefore $t_j = 1$ for some $j$. In particular $\|t\|_\infty = 1$ and
so~\eqref{eq:phitform} implies that $i([(f,g), t])$ belongs to $\conv\{v_{f,n} : n \in
\range{f} \setminus \{0\}\}$. This gives $\{[(f,g), t] : f \not= 0\} \subseteq
\bigcup^k_{i=1} \conv\{\ep_0, \dots, \ep_{i-1}, \ep_{i+1}, \dots, \ep_k\}$. For the
reverse inclusion, fix $\sigma \in P_k^{\max}$ and $f \in P_k$ with $f \le \sigma$ and
$h(f) = \ep_i$. Let $j = \sigma^{-1}(k)$. Using what we have already proved, we see that
\begin{align*}
i(\{[(f,\sigma), t] : {}&t \in [e_i, \mathbf{1}_k]\})\\
    &= \{t \in \conv\{\ep_1, \dots, \ep_{j-1}, \ep_{j+1}, \dots, \ep_k\} :
            t_{\sigma^{-1}(l)} \ge t_{\sigma^{-1}(l+1)}\text{ for all } l\}.
\end{align*}
For $\sigma \in P^{\max}_k$ and $j \le k$, let $f_{\sigma, j}$ be the unique element of
$P_k$ with $f_{\sigma, j} \le \sigma$ and $h(f_{\sigma, j}) = e_j$. Then
\begin{align*}
i(\{[(f,g), t] : f \not= 0\})
    &= \bigcup_{\sigma \in P^{\max}_k} \bigcup_{j \le k}
        i(\{[(f_{\sigma, j},\sigma), t] : t \in [e_j, \mathbf{1}_k]\}) \\
   &= \bigcup^k_{i=1} \conv\{\ep_0, \dots, \ep_{i-1}, \ep_{i+1}, \dots, \ep_k\},
\end{align*}
which proves the claim.

So $X_\Gamma$ is the topological disjoint union of two rank-$k$
simplices (which are homeomorphic to $k$-spheres) glued along their common
$(k-1)$-dimensional boundary; that is, a $k$-sphere.
\end{proof}

\begin{cor}
Fix $n \ge 1$ and $k \ge 0$. There is a finite $k$-graph whose topological realisation is
homeomorphic to a wedge of $n$ $k$-spheres.
\end{cor}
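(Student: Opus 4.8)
The plan is to build the wedge from $n$ copies of the $k$-sphere graph produced in Theorem~\ref{thm:spheres realised}, glued together at a single vertex. Write $\Gamma$ for the $k$-graph $(\{0,1\}\times\Sigma_k)/{\sim}$ constructed in the proof of Theorem~\ref{thm:spheres realised}, and let $p \in \Gamma^0$ be the image of the vertex $(0,(0,0))$ (here the first coordinate is the index $0 \in \{0,1\}$ and the second is the identity at the zero placing). Under the homeomorphism $X_\Gamma \cong S^k$ coming from Theorem~\ref{thm:homeomorphism}, this $p$ is the barycentre of one of the two simplices, i.e.\ one of the two poles of the sphere. The key structural observation is that $p$ is a \emph{sink}: since the zero placing satisfies $\Sigma_k(0,0)=\{(0,0)\}$ and the $\sim$-class of $(0,(0,0))$ is a singleton, we have $\Gamma p = \{p\}$, so the only morphism with source $p$ is $p$ itself.

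Next I would take $n$ disjoint copies $\Gamma_1,\dots,\Gamma_n$ of $\Gamma$, with corresponding pole vertices $p_1,\dots,p_n$, and let $\approx_0$ be the smallest equivalence relation on $\bigsqcup_{m} \Gamma_m$ with $p_1 \approx_0 \cdots \approx_0 p_n$. The first task is to check that $\approx_0$ satisfies conditions~(\ref{it:sim-d})--(\ref{it:sim-lift}) of Proposition~\ref{prp:quotient k-graph}, so that $\Gamma_\vee := \big(\bigsqcup_m \Gamma_m\big)/{\approx_0}$ is again a $k$-graph. This is entirely analogous to the gluing at sink and source vertices carried out in~\ref{star}: because each $p_m$ is a sink, there are no nonidentity morphisms with source any $p_m$, so the only elements ever identified by $\approx_0$ are the degree-zero vertices $p_1,\dots,p_n$ themselves. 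Conditions~(\ref{it:sim-d}) and~(\ref{it:sim-factor}) are then straightforward, since every identified element is a vertex of degree $0$; and conditions~(\ref{it:sim-comp}) and~(\ref{it:sim-lift}) follow from the sink property, because whenever $s(\alpha)$ equals some $p_m$ we must have $\alpha = p_m$, which makes the relevant compositions and lifts trivial to produce.

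Having formed $\Gamma_\vee$, I would invoke Proposition~\ref{prp:topological gluing} to describe $X_{\Gamma_\vee}$ as the quotient of $\bigsqcup_m X_{\Gamma_m}$ by the topological relation $\approx$ induced by $\approx_0$. Because the only nontrivial instances of $\approx_0$ occur among the degree-zero poles, the relation $\approx$ identifies $[\mu,s]$ with $[\nu,t]$ nontrivially precisely when each of $\mu(\floor{s},\ceil{s})$ and $\nu(\floor{t},\ceil{t})$ is one of the poles $p_m$; that is, $\approx$ glues together exactly the $n$ points $[p_1,0],\dots,[p_n,0]$ and identifies nothing else. Since each $X_{\Gamma_m}$ is homeomorphic to $S^k$ by Theorem~\ref{thm:spheres realised}, and each pole $[p_m,0]$ is a single point of $X_{\Gamma_m}$, the space $X_{\Gamma_\vee}$ is $n$ copies of $S^k$ glued at one common point, which is by definition the wedge $\bigvee_{m=1}^n S^k$. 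The graph $\Gamma_\vee$ is finite because $\Gamma$ is, and the argument applies uniformly for all $k \ge 0$ (for $k=0$ it recovers the $(n+1)$-point wedge of copies of $S^0$).

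The step I expect to require the most care is confirming that $\approx$ performs \emph{exactly} a one-point gluing on each sphere and introduces no spurious identifications away from the poles. This rests on two observations: that the pole vertex $p$ is represented by a single point of $X_\Gamma$ (all cells meeting the barycentre collapse to it), and that $p$ lies in a singleton $\approx_0$-class within each copy, so that no boundary or interior cells of the simplices are inadvertently glued.
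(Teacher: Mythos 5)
Your proposal is correct and follows essentially the same route as the paper: the paper also takes $n$ disjoint copies of the sphere graph $\Gamma$ of Theorem~\ref{thm:spheres realised} (realised there as a product $\{1,\dots,n\}\times\Gamma$ with a $0$-graph, which is the same as your disjoint union), glues the barycentre vertices via the smallest equivalence relation, checks conditions (\ref{it:sim-d})--(\ref{it:sim-lift}) of Proposition~\ref{prp:quotient k-graph} using exactly the sink property $\Gamma p=\{p\}$ you identify, and applies Proposition~\ref{prp:topological gluing} to conclude the realisation is $n$ spheres joined at one point. Your careful verification that the induced topological relation identifies only the points $[p_m,0]$ is a point the paper leaves implicit, so nothing is missing.
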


\begin{proof}
Let $\Gamma$ be the $k$-graph of Theorem~\ref{thm:spheres realised}. Regard $\{1 , \ldots
, n \}$ as a $0$-graph so that $\{0, \dots, n\} \times \Gamma$ is a $k$-graph. Each
$\{i\} \times \Gamma$ is a quotient of $\{0,1\} \times \Sigma_k$. The vertex $(0,0)$ in
$\Sigma_k$ satisfies $(0,0)\Sigma_k = \{(0,0)\}$, and so the vertex $v := [0, (0,0)]$ of
$\Gamma$ has the same property. For each $i \le n$, let $v_i := (i, v)$ be the copy of
$v$ in $\{i\} \times \Gamma$. Define an equivalence relation on $\{0, \dots, n\} \times
\Gamma$ by $\alpha \sim \beta$ if and only if $\alpha = v_i$ and $\beta = v_j$ for some
$i,j$. Again, this relation satisfies (\ref{it:sim-d})--(\ref{it:sim-lift}) of
Proposition~\ref{prp:quotient k-graph}, and so we may form the quotient $k$-graph
$\big(\{0, \dots, n\} \times \Gamma\big)/{\sim}$. Theorem~\ref{prp:topological gluing}
implies that the topological realisation of this quotient is the disjoint union of the
$X_{\{i\} \times \Gamma}$ all glued at a single point. We saw in the proof of
Theorem~\ref{thm:spheres realised} that each $X_{\{i\} \times \Gamma}$ is a $k$-sphere,
so $X_{\big(\{0, \dots, n\} \times \Gamma\big)/{\sim}}$ is a homeomorphic to a wedge of
$n$ $k$-spheres.
\end{proof}

\end{document}